\newtheorem{theorem}{Theorem}[section]
\newtheorem{lemma}[theorem]{Lemma}
\newtheorem{corollary}[theorem]{Corollary}
\newtheorem{proposition}[theorem]{Proposition}
\newtheorem{definition}[theorem]{Definition}
\newtheorem{example}[theorem]{Example}
\theoremstyle{remark}
\newtheorem*{remark}{Remark}
\theoremstyle{Main}
\newtheorem*{Main}{Theorem}
\def\ind#1#2{\left[{#1}:{#2}\right]}
\def\Aut#1{\text{Aut}({#1})}
\def\End#1{\text{End}({#1})}
\def\TA{{\bf TA}}
\def\TB{{\bf TB}}
\def\TBo{{\bf TB1}}
\def\TBt{{\bf TB2}}
\def\ident{e}
\def\triv{\{\ident\}}
\def\pbp#1{\overset{\rightarrow}{\sf par}({#1})}
\def\pbn#1{\overset{\leftarrow}{\sf par}({#1})}
\def\ml#1{{\sf lev}({#1})}
\def\ikk#1{{\sf bik}({#1})}
\def\nub#1{{\sf nub}({#1})}
\def\Sym{\text{\rm Sym}}
\def\BS{\text{\rm BS}}
\begin{document}
\bibliographystyle{plain}

\title[The scale for endomorphisms]{The Scale and Tidy Subgroups for Endomorphisms of Totally Disconnected Locally Compact Groups}

\begin{abstract}
The scale of an endomorphism, $\alpha$, of a totally disconnected, locally compact group $G$ is the minimum index $[\alpha(U) : \alpha(U)\cap U]$, for $U$ a compact, open subgroup of $G$. A structural characterization of  subgroups at which the minimum is attained is established. This characterization extends the notion of subgroup tidy for $\alpha$ from previously understood case when $\alpha$ is an automorphism to the case when $\alpha$ is merely an endomorphism. 
\end{abstract}

\author{George A. Willis}
\thanks{The support of A.R.C. grant DP0984342 is gratefully acknowledged}
\address{The University of Newcastle\\
University Drive\\
Callaghan, 2308\\
AUSTRALIA}

\maketitle

\section{Introduction}
\label{sec:introduction}

Automorphisms of totally disconnected, locally compact groups were investigated in \cite{Wi:structure} and \cite{wi:further}. The notion of a compact, open subgroup \emph{tidy} for the automorphism and the \emph{scale} of the automorphism were introduced in \cite{Wi:structure}, and it was shown in \cite{wi:further} that tidy subgroups and the scale characterize a certain minimizing property. These ideas have been used to answer questions concerning random walks and ergodic theory, \cite{DSW,JRW,PrevWu}, arithmetic groups \cite{ShWi}, and Galois groups~\cite{ChHr},  and have led to new developments in the structure theory of totally disconnected, locally compact groups. Throughout the paper, $G$ will denote a totally disconnected, locally compact group. 

The present paper extends the ideas of tidy subgroup and the scale to \emph{endomorphisms} of totally disconnected, locally compact groups, and shows that they still characterize when the index $[\alpha(U) : \alpha(U)\cap U]$ is minimized in this generality. Although the arguments follow along similar lines as for automorphisms, there are significant differences that require them to be completely reworked. One difference seen from the beginning is the definition of the group $U_+$, see Definition~\ref{defn:Uplus} below. Another is that, on occasions in the earlier papers a result is proved for the automorphism $\alpha$ and a `mirror' result deduced by applying it to $\alpha^{-1}$. In the case of endomorphisms, however, the `mirror' result must be formulated differently and requires a separate proof, compare Lemmas~\ref{lem:criterion} and~\ref{lem:criterion2} for example. Also, assertions that are immediate observations for automorphisms have a more restricted formulation and need careful proof for endomorphisms, see Propositions~\ref{prop:alphan} and~\ref{prop:tidy_subgroups}. In the time that has passed since the publication of~\cite{Wi:structure} and~\cite{wi:further} there have also been changes in terminology and notation and, it is hoped, improvements in the exposition that are given effect in this paper. 

Not all parts of~\cite{Wi:structure} and~\cite{wi:further} are superseded by the present paper. Examples of automorphisms used to illustrate the ideas are not repeated here and all examples below are of endomorphisms that highlight differences between the earlier ideas and their current version. (Further examples of the calculation of the scale and tidy subgroups for automorphisms of $p$-adic Lie groups may be found in~\cite{GlockScale1,GlockScale2}.) In addition, results in~\cite{Wi:structure} on continuity of the scale for inner automorphisms and in~\cite{wi:further} on stability properties of the scale when passing to subgroups and quotient groups do not make sense for endomorphisms and are not reproduced here. 

Endomorphisms of locally compact totally disconnected groups have also been considered in~\cite{BDG-B}, and of compact totally disconnected groups in~\cite{reid}. The results in the latter paper complement those given here: when $G$ is compact, the index $[\alpha(U) : \alpha(U)\cap U]$ is minimised by $G$ itself and the scale of every endomorphism is thus equal to~$1$ while, on the other hand, the present paper has most to say about endomorphisms of non-compact groups having scale greater than~$1$. There is an extensive study of endomorphisms of abstract groups (or, from the present point of view, of groups with the discrete topology) in the literature, see~\cite{BaumSol,Kochloukova,PNeumann,nonHopf} for example. The focus in these papers is often on Hopfian (all onto endomorphisms are automorphisms) and co-Hopfian (all one-to-one endomorphisms are automorphisms) groups. The last section of the present paper presents examples of Hopfian and co-Hopfian non-discrete groups. 

An analogy between automorphisms of totally disconnected, locally compact groups and linear transformations has motivated development of the theory of the scale and tidy subgroups, see~\cite{Wi:SimTr}. This analogy suggests the desirability of extending the theory to endomorphisms but the author did not succeed in doing so hitherto. However, being asked that question by Yves Cornulier during the conference `Locally compact groups beyond Lie theory' held at Spa in early April 2013 led to a new and this time successful attempt. I am grateful to Yves for his question and to the organisers of the conference for giving us the opportunity for discussion. 

\section{Outline of the main theorem and its proof}
\label{sec:outline}

The main definitions and ideas concerning endomorphisms are presented in this section along with an outline of the paper. Detailed arguments are presented in the following sections. The starting point is the following fundamental theorem about totally disconnected, locally compact groups, which was proved by van Dantzig in the 1930's, see~\cite{Dant}, \cite[Theorem~II.2.3]{MontZip} or~\cite[Theorem~II.7.7]{HewRoss}. 

\begin{theorem}[van Dantzig]
\label{thm:COSubgroups}
Let $G$ be a totally disconnected locally compact group. Then $G$ has a base of neighbourhoods of the identity consisting of compact, open subgroups.
\end{theorem}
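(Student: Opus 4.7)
The plan is to construct, inside any given neighbourhood of the identity, a compact open subgroup, in three stages: first produce a compact neighbourhood, then trim it to a compact open neighbourhood using total disconnectedness, then promote that neighbourhood to a subgroup using continuity of the group operations.

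Fix a neighbourhood $V$ of the identity $e$. Since $G$ is locally compact, I would first choose a compact neighbourhood $K$ of $e$ with $K\subseteq V$. Next I would use that a compact Hausdorff totally disconnected space is zero-dimensional, so $K$ admits a base of neighbourhoods of $e$ (in the subspace topology) consisting of sets that are clopen in $K$. Pick such a clopen-in-$K$ subset $W$ with $e\in W\subseteq \mathrm{int}(K)$. Because $W$ is open in $K$ and $W\subseteq\mathrm{int}(K)$, it is open in $G$; because $W$ is closed in the compact set $K$, it is compact, hence closed in $G$. Thus $W$ is a compact open neighbourhood of $e$ contained in $V$.

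The main step is to replace $W$ by a subgroup. By continuity of multiplication, for each $w\in W$ there are open neighbourhoods $U_w$ of $e$ and $V_w$ of $w$ with $U_wV_w\subseteq W$. Compactness of $W$ yields a finite subcover $V_{w_1},\dots,V_{w_n}$, and I would set $U_0=\bigcap_{i=1}^n U_{w_i}\cap W$, a symmetric (after intersecting with $U_0^{-1}$) open neighbourhood of $e$ contained in $W$ and satisfying $U_0W\subseteq W$. Let $H=\bigcup_{n\ge 1}U_0^{n}$ be the subgroup of $G$ generated by $U_0$. Since $e\in U_0$ and $U_0W\subseteq W$, induction gives $U_0^n\subseteq W$ for all $n$, so $H\subseteq W$; since $H$ is a union of open sets it is open, and since every open subgroup of a topological group is closed, $H$ is closed. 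A closed subset of the compact set $W$ is compact, hence $H$ is a compact open subgroup of $G$ contained in $V$.

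I expect the step requiring most care is the passage from the compact neighbourhood $K$ to the clopen neighbourhood $W$: it relies on the nontrivial fact that a compact Hausdorff totally disconnected space is zero-dimensional (equivalently, that quasi-components and connected components coincide in compact Hausdorff spaces). Once that is in hand, the remaining manipulation with $U_0$ and its powers is formal, and the required base of compact open subgroups of $e$ is obtained by letting $V$ range over all neighbourhoods of $e$.
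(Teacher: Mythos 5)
Your proof is correct and is essentially the classical argument: the paper does not prove this theorem itself but cites it to van Dantzig, Montgomery--Zippin and Hewitt--Ross, and your three-stage construction (compact neighbourhood, clopen trimming via zero-dimensionality of compact Hausdorff totally disconnected spaces, then generating a subgroup from a symmetric open $U_0$ with $U_0W\subseteq W$) is exactly the standard proof given in those references. No gaps.
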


An \emph{endomorphism} of $G$ is a continuous homomorphism $\alpha: G\to G$, and an \emph{automorphism} is an endomorphism that is one-to-one and onto having continuous inverse. The set of endomorphisms of $G$ forms a semigroup under composition, which will be denoted by $\End{G}$, and the automorphisms form a group, which will be denoted by $\Aut{G}$. 

Endomorphisms of $G$ are studied here through their action on the compact, open subgroups of $G$. If $U$ is such a subgroup, then $\alpha(U)$ is a compact subgroup of $G$ that is not necessarily open. The intersection $\alpha(U)\cap U$ is an open subgroup of $\alpha(U)$ and so  has finite index $[\alpha(U) : \alpha(U)\cap U]$, which it will be referred to as the \emph{displacement index} of the subgroup $U$ by $\alpha$. The displacement index equals~1 if and only if $U$ is invariant under~$\alpha$ and the following definition therefore gives a measure of the extent by which $\alpha$ fails to leave any compact, open subgroup invariant.

\begin{definition}
\label{def:ScaleTidy} 
Let $G$ be a totally disconnected locally compact group and let
$\alpha$ be an endomorphism of ${G}$. The \emph{scale} of $\alpha$ is the positive integer
$$
s(\alpha) := \min \left\{\ind{\alpha(U)}{\alpha(U)\cap U} \mid U\leq G\text{
is compact and open}\right\}.
$$
The compact open subgroup $U$ is \emph{minimizing} for $\alpha$ if the minimum
is attained at~$U$. 
\end{definition}

It is shown in~\cite{wi:further} that a compact, open subgroup is minimizing for an automorphism $\alpha$ if and only if it satisfies structural conditions called the \emph{tidiness} criteria for $\alpha$. The main result of this paper is the following, which extends this characterization to endomorphisms, with modified tidiness criteria. 
\begin{Main}[The Structure of Minimizing Subgroups]
\label{thm:tidystucture}
Let $\alpha$ be an endomorphism of the totally disconnected, locally compact group ${G}$. For a compact, open subgroup, $U$, of $G$ put 
\begin{align*}
U_+ &= \left\{ x\in U \mid \exists \{x_n\}_{n\in\mathbb{N}}\subset U \text{ with }x_0 = x\text { and }\alpha(x_{n+1}) = x_n\text{ for every }n\right\}\\
\text{ and } U_- &= \left\{ x\in U \mid \alpha^n(x)\in U \text{ for every }n\in\mathbb{N}\right\}.
\end{align*}
Then $U$ is minimising for $\alpha$ if and only if:
\begin{description}
\item[\TA] $U = U_+U_-$;
\item[\TB1] $U_{++} = \bigcup_{n\geq0} \alpha^n(U_+) $ is closed; and 
\item[\TB2] the sequence of integers $\left\{[\alpha^{n+1}(U_+) : \alpha^n(U_+)]\right\}_{n\in\mathbb{N}}$ is constant.
\end{description}
\end{Main}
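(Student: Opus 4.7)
The plan is to prove both implications by developing an algorithmic procedure that transforms any compact open subgroup into a tidy one without increasing the displacement index. Minimality of tidy subgroups will then follow by observing that all tidy subgroups yield the same displacement index, given by an explicit formula.

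First I would establish basic structural facts about $U_+$, $U_-$ and $U_{++}$. Both $U_+$ and $U_-$ are compact subgroups of $U$, with $\alpha(U_-) \leq U_-$ and, crucially, $U_+ = \alpha(U_+) \cap U$. The latter identity is central: if $x \in U_+$ has backward chain $x_0 = x, x_1, x_2, \ldots$ in $U$, then $x_1 \in U_+$ (with chain $x_1, x_2, \ldots$) and $x = \alpha(x_1) \in \alpha(U_+)$; conversely any element of $\alpha(U_+) \cap U$ inherits a backward chain from its preimage. It follows that $U_{++} = \bigcup_{n \geq 0} \alpha^n(U_+)$ is $\alpha$-invariant.

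For the direction tidy implies minimizing, assume (TA), (TB1) and (TB2). Since $U = U_+ U_-$ and $\alpha(U_-) \leq U_-$, we have $\alpha(U) = \alpha(U_+)\alpha(U_-)$ with $\alpha(U_-) \leq \alpha(U) \cap U$. Hence every coset of $\alpha(U) \cap U$ in $\alpha(U)$ has a representative in $\alpha(U_+)$, and two such representatives are in the same coset if and only if their ratio lies in $\alpha(U_+) \cap U = U_+$. This yields the key formula
\[
[\alpha(U) : \alpha(U)\cap U] = [\alpha(U_+) : U_+],
\]
which by (TB2) equals $[\alpha^{n+1}(U_+): \alpha^n(U_+)]$ for every $n$. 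Minimality is then verified by comparison with an arbitrary compact open subgroup $V$: shrinking $V$ to $V \cap U$ via van Dantzig's theorem and using (TB1) to ensure that $U_{++}$ carries the structure of a closed $\alpha$-invariant subgroup on which $\alpha$ acts predictably, one shows the displacement index of $V$ is no smaller than the value above.

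For the converse I would modify a given compact open $U$ in three stages, each non-increasing the displacement index, and conclude that a minimizing $U$ is fixed by each stage. Stage one enforces (TA) via an iterated intersection construction combining $\bigcap_{k=0}^n \alpha^{-k}(U)$ with its forward counterpart built from backward chains; a criterion lemma (the endomorphism analogue of the one for automorphisms, together with the separately proved mirror version as the author flags) detects when the factorization $U = U_+U_-$ holds. Stage two enforces (TB1) by replacing $U$ with a suitable compact open neighbourhood whose $U_+$ generates a closed $U_{++}$. Stage three enforces (TB2): the sequence $[\alpha^{n+1}(U_+): \alpha^n(U_+)]$ is nonincreasing in general, and one must refine $U$ so that the sequence is constant from $n = 0$.

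The main obstacle will be stage three and its interaction with the first two. For automorphisms, injectivity makes constancy of the sequence automatic once (TA) holds, but for endomorphisms the successive kernels within $\alpha^n(U_+)$ can shrink and cause the index to drop. Controlling this drop while preserving (TA) and (TB1), and proving that no drop occurs when $U$ is minimizing, requires the delicate index bookkeeping that I expect to be the principal technical content of the proof.
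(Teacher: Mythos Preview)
Your overall architecture---a tidying procedure that never increases the displacement index, combined with the fact that all tidy subgroups share the same index---matches the paper. The identity $[\alpha(U):\alpha(U)\cap U]=[\alpha(U_+):U_+]$ for $U$ tidy above is also correct and is Lemma~\ref{lem:indexreduced} in the paper.

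However, there is a genuine gap in your Stages~2 and~3. You propose to enforce \TBo\ and \TBt\ \emph{separately}, first closing up $U_{++}$ and then adjusting so the index sequence is constant. The paper does not, and probably cannot, proceed this way. The key object you are missing is the compact $\alpha$-stable subgroup
\[
L_V=\overline{\{x\in G: x=\alpha^m(y)\text{ for some }y\in V_+,\ \alpha^n(x)\in V_-\text{ for some }n\}},
\]
the closure of all bounded $\alpha$-orbits that pass through $V$. Proposition~\ref{prop:Vplusplus} shows that for $V$ tidy above, conditions \TBo\ and \TBt\ are \emph{jointly} equivalent to $L_V\leq V$; they are two manifestations of the same phenomenon and are achieved in a single step by forming $W=\widetilde{V}L_V$, where $\widetilde{V}=\{x\in V:xL_V\subseteq L_VV\}$. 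Proving that $L_V$ is compact (Proposition~\ref{prop:Liscompact}) and that $W$ remains tidy above with $L_W=L_V\leq W$ (Proposition~\ref{prop:define_W}) is the technical heart of the argument. Your description ``replacing $U$ with a suitable compact open neighbourhood whose $U_+$ generates a closed $U_{++}$'' does not suggest this mechanism and would not by itself control the index drop you correctly identify as the issue for endomorphisms.

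A second, smaller gap: your direct argument that tidy implies minimizing (``shrinking $V$ to $V\cap U$ \ldots\ one shows the displacement index of $V$ is no smaller'') is not a proof as stated. The paper instead shows that the intersection of two tidy subgroups is tidy (Proposition~\ref{prop:intersection}) and deduces that all tidy subgroups have the same displacement index (Proposition~\ref{prop:scalesame}); minimality then follows because the tidying procedure produces at least one tidy subgroup from any starting point.
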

It is immediate from the definitions that $U_+$, $U_-$ and $U_{++}$ are subgroups of $G$ and it will be seen that $U_+$ and $U_-$ are closed. 
\begin{definition}
\label{defn:TIDY}
Let $G$ be a totally disconnected, locally compact group and $\alpha$ be in $\End{G}$. The compact, open subgroup $U$ is \emph{tidy above} for $\alpha$ if it satisfies \TA\ and \emph{tidy below} if it satisfies \TB{\bf1} and \TB{\bf2}. A subgroup that is tidy above and below is \emph{tidy} for $\alpha$. 
\end{definition}

Comparing with the definitions and results for automorphisms in~\cite{Wi:structure}, condition \TB{\bf2} was not required there for tidiness below because it is automatic for one-to-one endomorphisms. In~\cite{Wi:structure} it was shown as well that an automorphism is tidy below if and only if $V_{--}$ is closed. The same criterion continues to hold for endomorphisms if $V_{--}$ is defined to be $\bigcup_{n\geq0} \alpha^{-n}(U_-)$, as is shown in Proposition~\ref{prop:inV} below. However, although more complicated, it is more illuminating to state criteria for tidiness below in terms of $V_{++}$ because these are the criteria that emerge naturally in the course of the proof. It will be seen also that the definition of the subgroup~$U_+$ given in the main theorem is equivalent to the definition, $U_+ = \bigcap_{n\geq0}\alpha^n(U)$, given in~\cite{Wi:structure} when $\alpha$ is an automorphism but that the more complicated definition is necessary when $\alpha$ is merely an endomorphism.

The theorem characterizing the structure of minimizing subgroups is a consequence of the \emph{tidying procedure} and is proved in Sections~\ref{sec:factoring}--\ref{sec:Min_equiv_Tidy} as this procedure is developed. Given an endomorphism $\alpha$ of $G$, the tidying procedure takes any compact
open subgroup $U$ of $G$ and modifies it in two steps to produce a subgroup tidy for $\alpha$. The first step, carried out in Section~\ref{sec:factoring}, results in a subgroup that is tidy above and the second, carried out in Sections~\ref{sec:V0} and~\ref{sec:tidy_below}, in a subgroup that is tidy below as well. That the subgroup produced in Section~\ref{sec:tidy_below} in fact satisfies the criteria \TBo\ and \TBt\ is shown in Section~\ref{sec:TB_criteria}. Each step in the tidying procedure reduces the displacement index and this, together with the fact established in
Section~\ref{sec:Min_equiv_Tidy} that all tidy subgroups have the same displacement index, shows the equivalence between tidiness for $\alpha$ and being minimizing for $\alpha$. In Section~\ref{sec:Moller}, a limit formula for the scale of $\alpha$ established for automorphisms by R.\,G.\,M\"oller in \cite[Theorem~7.7]{Moller} is extended to endomorphisms.  Certain subgroups of $G$ associated with $\alpha$, such as the parabolic and Levi subgroups, are studied in Section~\ref{sec:subgroups}.

The orbit $\{\alpha^n(x)\}_{n\in\mathbb{Z}}$ plays a role at many points in \cite{Wi:structure} and \cite{wi:further}, where only the case when $\alpha$ is an automorphism is considered. When~$\alpha$ does not have an inverse only the orbit  $\{\alpha^n(x)\}_{n\in\mathbb{N}}$ makes sense and the following concept fills the role of the orbit under negative powers of $\alpha$ in that case.
\begin{definition}
\label{defn:regressive}
Let $\alpha\in\End{G}$ and $x\in G$. An \emph{$\alpha$-regressive sequence} for $x$ is a sequence $\{x_{n}\}_{n\in\mathbb{N}}$ such that $x_0 = x$ and $\alpha(x_{n+1}) = x_{n}$ for each $n\in\mathbb{N}$. 
\end{definition}
An $\alpha$-regressive sequence for $x$ need not exist, and when it does exist it need not be unique. It will often be a hypothesis that an $\alpha$-regressive sequence exists and either has an accumulation point or is bounded, that is, has compact closure. 

The tidying procedure will be illustrated with examples in addition to those that may be found in~\cite{Wi:structure,wi:further}. In many of these examples the group $G$ will be either the additive group of the field $F_p((t))$ of formal Laurent series over the field $F_p$ of order~$p$, or of its subring $F_p[[t]]$ of formal power series. Elements of the group will be denoted $f = \sum_{n\geq N} f_nt^n$ for some $N\in\mathbb{Z}$. The subring $F_p[[t]]$ is isomorphic to $F_p^{\mathbb{N}}$ and the field $F_p((t))$ is locally compact under the topology in which $F_p[[t]]$ is an open subgroup homeomorphic to the compact space $F_p^{\mathbb{N}}$ with the product topology. 

\section{Subgroups that are tidy above}
\label{sec:factoring}

The first step towards finding a  subgroup tidy for $\alpha$ in $\End{G}$ is to show that  each compact open subgroup, $U$, of $G$ contains a subgroup, $V$, with finite index in~$U$ which factors into a subgroup $V_+$ that $\alpha$ expands and a subgroup $V_-$ that~$\alpha$ shrinks. In other words, that $U$ has a open subgroup $V$ that is tidy above for~$\alpha$. This smaller compact open subgroup has smaller displacement index than the original with equality if and only if $U$ is already tidy above. 

The special case when $G$ is discrete  motivated the argument given in \cite{Wi:structure} and this special case also highlights that the argument must be modified in order to extend it to cover endomorphisms. When $G$ is discrete, its compact subgroups are finite and for any such subgroup, $F$ say, the sequence $\left\{\bigcap_{k=0}^n\alpha^k(F)\right\}_{n\geq0}$ stabilises eventually. If $\alpha$ is an automorphism, the limit of the sequence is $\alpha$-invariant and is therefore a minimizing subgroup. However, that need no longer be so when $\alpha$ is merely an endomorphism, as the next example shows. In the example, $C_p$ denotes the cyclic group of order $p$ and $\bar{0}$ denotes its identity. 
\begin{example}
\label{ex:not_stable}
Let $G = C_p^3 = \left\{ (g_1,g_2,g_3)\mid g_j\in C_p\right\}$, and let $\alpha\in\End{G}$ be given by 
$$
\alpha(g_1,g_2,g_3) = (\bar{0},g_1+g_3,g_2).
$$
Consider the subgroup $F = \left\{(g_1,g_2,\bar{0})\mid g_j\in C_p\right\}$ of $G$. Then 
$$
\alpha^k(F) =  \left\{(\bar{0},g_2,g_3)\mid g_j\in C_p\right\}\text{ for every }k\geq1
$$ 
and 
$$
F_+ := \bigcap_{k=0}^n \alpha^k(F)  = \left\{(\bar{0},g_2,\bar{0})\mid g_j\in C_p\right\}\text{ for every }n\geq1.
$$ 
However $\alpha(F_+) = \left\{(\bar{0},\bar{0},g_3)\mid g_j\in C_p\right\}$ and $F_+$ is not stable under $\alpha$. Note too that $\bigcap_{k=0}^\infty \alpha(\alpha^k(F))  = \left\{(\bar{0},g_2,g_3)\mid g_j\in C_p\right\}$ is strictly greater than $\alpha\left(F_+)\right)$. 
\end{example}

The automorphism $\alpha$ in the example is not one-to-one and for such maps the inclusion $\alpha(A\cap B) \leq \alpha(A)\cap \alpha(B)$ may be strict. It is for this reason that $F_+$ is not $\alpha$-stable. For the same reason, defining $U_+$ as in~\cite{Wi:structure} does not lead to the  conclusion that $\alpha(U_+)\geq U_+$. The following modified definition is equivalent to the original one when $\alpha$ is an automorphism. 
\begin{definition}
\label{defn:Uplus}
Let $U$ be a compact open subgroup of $G$ and $\alpha$ be an endomorphism of $G$. Define subgroups $U_n$, $n\geq0$ and $U_+$ of $U$ recursively by setting $U_0 = U$ and  
\begin{equation}
\label{eq:Uplus}
U_{n+1} = U\cap \alpha(U_n) \text{ for } n\geq 0 \text{ and } U_+ = \displaystyle \bigcap_{n=0}^\infty U_n.
\end{equation}
\end{definition}
Since $U$ is compact and $\alpha$ continuous, $U_n$ and $U_+$ are compact subgroups of~$U$ and an induction argument shows that $\{U_n\}_{n\geq0}$ is a non-increasing sequence. Unlike the situation in \cite{Wi:structure}, $U_n$ is not open unless $\alpha$ is an open map. The subgroups $U_n$ and $U_+$ have the alternative characterisations:
\begin{align}
\label{eq:Uplus2}
U_n &= \left\{ u\in U \mid \exists v\in U\text{ with } \alpha^j(v)\in U \text{ for }j\in\{0,1,\dots, n\}  \text{ and }  u = \alpha^n(v)\right\};\\
U_+ &= \left\{ u\in U \mid \exists u \text{ has an $\alpha$-regressive sequence }\{u_n\}_{n\in\mathbb{N}}\subset U\right\}.
\notag
\end{align}

When defined in this way, $U_+$ is expanded by $\alpha$ as required. 
\begin{lemma}
\label{lem:Uplus}
Let $\{K_n\}_{n\geq0}$ be a non-increasing sequence of compact subsets of $G$ and put $K_+ = \bigcap_{n\geq0} K_n$. Then $\alpha(K_+) = \bigcap_{n\geq0} \alpha(K_n)$. 

Let $U_+$ be as in Definition~\ref{defn:Uplus}. Then
\begin{equation}
\label{eq:Uplus}
\alpha(U_+)\geq U_+\text{ and }  U\cap\alpha(U_+) = U_+.
\end{equation}
\end{lemma}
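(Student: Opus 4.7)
The plan is to treat the two assertions in order: first the general fact that a continuous map commutes with intersections of a decreasing family of compact sets, and then specialise to the sequence $\{U_n\}_{n\geq 0}$.

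For the first assertion, the inclusion $\alpha(K_+)\subseteq \bigcap_{n\geq 0}\alpha(K_n)$ is immediate from $K_+\leq K_n$ for every $n$. For the reverse inclusion, fix $y\in\bigcap_{n\geq 0}\alpha(K_n)$ and consider the sets $L_n := \alpha^{-1}(y)\cap K_n$. Each $L_n$ is a closed subset of the compact set $K_n$ (because $\{y\}$ is closed in the Hausdorff group $G$ and $\alpha$ is continuous), each $L_n$ is non-empty by choice of $y$, and the family $\{L_n\}_{n\geq 0}$ is decreasing because $\{K_n\}_{n\geq 0}$ is. By the finite intersection property applied inside the compact set $K_0$, the set $\bigcap_{n\geq 0}L_n = \alpha^{-1}(y)\cap K_+$ is non-empty, which supplies a preimage of $y$ lying in $K_+$; hence $y\in\alpha(K_+)$.

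For the second assertion, first note that the sequence $\{U_n\}_{n\geq 0}$ from Definition~\ref{defn:Uplus} is a non-increasing sequence of compact sets: $U_1 = U\cap \alpha(U_0) \leq U_0$, and inductively if $U_n\leq U_{n-1}$ then $U_{n+1} = U\cap \alpha(U_n)\leq U\cap \alpha(U_{n-1}) = U_n$. Applying the first part with $K_n = U_n$ yields
\begin{equation*}
\alpha(U_+) \;=\; \bigcap_{n\geq 0}\alpha(U_n).
\end{equation*}
From this, $U\cap \alpha(U_+) = \bigcap_{n\geq 0}\bigl(U\cap \alpha(U_n)\bigr) = \bigcap_{n\geq 0} U_{n+1} = U_+$, using that the sequence is non-increasing with $U_0 = U$ so that deleting the index $0$ term does not change the intersection. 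The inclusion $\alpha(U_+)\geq U_+$ then follows directly: $U_+ = U\cap \alpha(U_+)\leq \alpha(U_+)$.

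The only non-formal step is the compactness argument in the first paragraph, so that is the main (and really the only) obstacle; everything else is bookkeeping with the recursive definition. I would expect the proof in the paper to be organised along these same lines, since the compactness of the $K_n$ is exactly the hypothesis that makes the set-theoretic identity go through and is what forces the more elaborate recursive definition of $U_+$ in the non-injective case, where one cannot simply write $\bigcap_n \alpha^n(U)$.
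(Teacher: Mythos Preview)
Your proof is correct and follows essentially the same approach as the paper: the compactness argument via the decreasing family of non-empty closed preimage sets $L_n=\alpha^{-1}(y)\cap K_n$ is identical (the paper calls these $X_n$), and the computation $U\cap\alpha(U_+)=\bigcap_{n\geq0}U_{n+1}=U_+$ is the same. The only cosmetic difference is that the paper derives $\alpha(U_+)\geq U_+$ directly from $\alpha(U_n)\geq U_{n+1}$ before proving $U\cap\alpha(U_+)=U_+$, whereas you deduce it afterwards as the trivial consequence $U_+=U\cap\alpha(U_+)\leq\alpha(U_+)$; the two routes are interchangeable.
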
 
\begin{proof}
Since $K_+\subseteq K_n$ for each $n$, $\alpha(K_+) \subseteq \bigcap_{n\geq0} \alpha(K_n)$. That the sets $K_n$ are compact is important for the proof of the reverse inclusion. Consider $y$ in $\bigcap_{n\geq0} \alpha(K_n)$. For each $n$, the set $X_n =\left\{ x\in K_n\mid \alpha(x) = y\right\}$ is non-empty and compact, and the sequence $\{X_n\}_{n\in\mathbb{N}}$ is non-increasing because $\{K_n\}_{n\in\mathbb{N}}$ is. Hence $\bigcap_{n\in\mathbb{N}} X_n$ is not empty and any $x$ in this intersection belongs to $K_+$ and satisfies $y=\alpha(x)$. 

To see (\ref{eq:Uplus}), note that $\alpha(U_n)\geq U_{n+1}$ by definition. Hence
$$
\alpha(U_+) = \bigcap_{n\geq0} \alpha(U_n) \geq \bigcap_{n\geq1} U_n = U_+.
$$
For the last claim: $U\cap \alpha(U_+) = \bigcap_{n\geq0} U\cap\alpha(U_n) = \bigcap_{n\geq0} U_{n+1} = U_+$. 
\end{proof}
For $G$ discrete, $U_+$ is finite and the above argument shows that $U_+$ is $\alpha$-stable. 

In~\cite{Wi:structure} the subgroup $U_-$ is defined by replacing $\alpha$ with $\alpha^{-1}$ in the definition of~$U_+$. That cannot be done when $\alpha$ is an endomorphism and so this definition too must be modified. (Note that $\alpha^{-k}(U)$ denotes the set-theoretic inverse image of $U$ under $\alpha^k$.) 

\begin{definition}
\label{defn:Uminus}
Let $U$ be a compact open subgroup of $G$ and $\alpha$ be an endomorphism of $G$. Define subgroups $U_{-n}$, $n\geq0$ and $U_-$ of $U$ by 
\begin{align}
\label{eq:Uminus}
U_{-n} &= \bigcap_{k=0}^n \alpha^{-k}(U) = \left\{ u\in U \mid \alpha^k(u)\in U \text{ for } k\in\{0,\dots, n\}\right\}, \text{ and }\\
U_- &= \displaystyle \bigcap_{n=0}^\infty U_{-n} = \left\{ u\in U \mid \alpha^k(u)\in U \text{ for all } k\in\mathbb{N}\right\}.\notag
\end{align}
\end{definition}
The definition of $U_{-n}$, unlike that of $U_n$, is not recursive and does not need to be: making a recursive definition of $U_{-n}$ analogous to that of $U_n$ yields the same subgroup as in~(\ref{eq:Uminus}).  Since $\alpha$ is a continuous homomorphism, $U_{-n}$ is an open subgroup of $U$ for each $n\geq0$ and $U_-$ is a closed subgroup. 

The identity $
\alpha^k\left(\bigcap_{j=m}^n \alpha^j(U)\right) = \bigcap_{j=m+k}^{n+k} \alpha^j(U)
$ is used frequently and without remark in arguments when $\alpha$ is an automorphism but need not be true or even make sense when $\alpha$ is an endomorphism. Part~(\ref{eq:Uminus1}) of the next lemma is a reformulation of this identity that holds for endomorphisms. 
\begin{lemma}
\label{lem:Uminus}
The sequence $\left\{U_{-n}\right\}_{n\geq0}$ of open subgroups of $U$ is non-increasing. These subgroups satisfy 
\begin{enumerate}
\item \label{eq:Uminus0}
$U_{-N-m} = (U_{-N})_{-m}\text{ for all }m,N\geq0$,
\item
\label{eq:Uminus1}
$\alpha^k(U_{-n}) = 
\begin{cases}
U_k\cap U_{k-n}, & \text{ if } 0\leq k\leq n\\
\alpha^{k-n}(U_n), & \text{ if } k\geq n
\end{cases}$, \quad
and 
\item \label{eq:Uminus2}
$(U_{-n})_j = U_j \cap U_{-n}  \text{ for } j\geq0,\ \text{ and }\ (U_{-n})_+ =  U_+ \cap U_{-n}$.
\end{enumerate}
\end{lemma}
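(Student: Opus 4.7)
The plan is to reduce each assertion to a set-theoretic unfolding via the characterizations recorded in~(\ref{eq:Uplus2}) and~(\ref{eq:Uminus}), reading $U_{-n}$ simply as the set of $u\in U$ whose forward $\alpha$-orbit remains in $U$ through time $n$. The non-increasing property of $\{U_{-n}\}$ is read off directly from the intersection definition. For part~(\ref{eq:Uminus0}), I would unfold $(U_{-N})_{-m}$ as the set of $u$ with $\alpha^k(u)\in U_{-N}$ for $k\leq m$, which is the same as $\alpha^{j+k}(u)\in U$ for all $j\leq N$ and $k\leq m$; this in turn is the condition $\alpha^l(u)\in U$ for $l\in\{0,\dots,N+m\}$, i.e.\ $u\in U_{-N-m}$.

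For part~(\ref{eq:Uminus1}) the two cases are handled separately, interpreting $U_{k-n}$ for $k\leq n$ as $U_{-(n-k)}$. In the case $0\leq k\leq n$, for $u\in U_{-n}$ the orbit condition splits into two halves: the initial segment $\alpha^j(u)\in U$ with $j\leq k$ places $\alpha^k(u)$ in $U_k$ (with $u$ itself as the witness in~(\ref{eq:Uplus2})), and the tail $\alpha^{k+i}(u)\in U$ with $i\leq n-k$ places $\alpha^k(u)$ in $U_{k-n}$. Conversely, a point $y=\alpha^k(v)\in U_k\cap U_{k-n}$ comes with a witness $v\in U$ satisfying $\alpha^j(v)\in U$ for $j\leq k$, and $y\in U_{k-n}$ extends this to $\alpha^l(v)\in U$ for all $l\in\{0,\dots,n\}$, so $v\in U_{-n}$ and $y\in\alpha^k(U_{-n})$. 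The case $k\geq n$ reduces to the previous one via $\alpha^k(U_{-n})=\alpha^{k-n}(\alpha^n(U_{-n}))=\alpha^{k-n}(U_n\cap U_0)=\alpha^{k-n}(U_n)$.

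For part~(\ref{eq:Uminus2}), I would apply the equivalent form of~(\ref{eq:Uplus2}) to $U_{-n}$ in place of $U$: a witness $v\in U_{-n}$ with $\alpha^i(v)\in U_{-n}$ for $i\leq j$ is exactly an element $v\in U$ with $\alpha^l(v)\in U$ for $l\in\{0,\dots,j+n\}$, whose image $\alpha^j(v)$ then lies in both $U_j$ and $U_{-n}$. This yields $(U_{-n})_j=U_j\cap U_{-n}$. Intersecting over $j\geq 0$ (equivalently, refining any $\alpha$-regressive sequence $\{u_m\}\subset U$ for $u\in U_+\cap U_{-n}$ and observing that $\alpha^j(u_m)=u_{m-j}\in U$ for $j\leq m$ while $\alpha^{m+k}(u_m)=\alpha^k(u)\in U$ for $k\leq n$, so $u_m\in U_{-(m+n)}\subseteq U_{-n}$) then gives $(U_{-n})_+=U_+\cap U_{-n}$.

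No step presents a conceptual obstacle; the only place where care is required is the index bookkeeping in~(\ref{eq:Uminus1}) and remembering the convention that $U_{k-n}$ is to be read as $U_{-(n-k)}$ when $k\leq n$.
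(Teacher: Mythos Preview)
Your proof is correct. Parts~(\ref{eq:Uminus0}) and~(\ref{eq:Uminus1}) match the paper's argument essentially verbatim: direct element-chasing via the orbit descriptions in~(\ref{eq:Uplus2}) and~(\ref{eq:Uminus}).

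For part~(\ref{eq:Uminus2}) you take a slightly different route. The paper proves $(U_{-n})_j = U_j\cap U_{-n}$ by induction on~$j$, using the recursive definition $(U_{-n})_{j+1} = U_{-n}\cap\alpha((U_{-n})_j)$ and applying part~(\ref{eq:Uminus1}) twice to compute $\alpha(U_j\cap U_{-n}) = \alpha^{j+1}(U_{-n-j}) = U_{j+1}\cap U_{1-n}$. You instead unfold the characterization~(\ref{eq:Uplus2}) for $U_{-n}$ directly, observing that a witness $v$ with $\alpha^i(v)\in U_{-n}$ for $i\leq j$ is exactly an element of $U_{-(j+n)}$, whose image $\alpha^j(v)$ lands in $U_j\cap U_{-n}$. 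Your argument is more self-contained and avoids the induction; the paper's argument has the virtue of showing how~(\ref{eq:Uminus1}) does the work. Both then obtain $(U_{-n})_+ = U_+\cap U_{-n}$ by intersecting over~$j$.
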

\begin{proof}
It is immediate from the definition that $U_{-n-1}\leq U_{-n}$ for each $n\geq0$ and so the sequence in non-increasing. To see (\ref{eq:Uminus0}), consider that $u$ belongs to $U_{-N-m}$ if and only if $\alpha^j(u)\in U$ for every $j\in\{0,1,\dots, m+N\}$. This is equivalent to $\alpha^k(\alpha^j(u))$ belonging to $U$ for every $j\in\{0,1,\dots, m\}$ and $k\in\{0,1,\dots,N\}$, and then to $\alpha^j(u)$ belonging to $U_{-N}$ for every $j\in\{0,1,\dots, m\}$.  

For (\ref{eq:Uminus1}), consider first the case when $0\leq k\leq n$. Then
$$
\alpha^k(U_{-n}) = \left\{ \alpha^k(u) \mid \alpha^j(u)\in U \text{ for } 0\leq j\leq n\right\}.
$$ 
Set $\alpha^k(u)$ equal to $v$. Then~$v$ belongs to $U_k$ because $\alpha^j(u)\in U$ for $j\in\{0,1,\dots k\}$. Furthermore,  for $j$ in $\{k,\dots n\}$, $\alpha^{j-k}(v) = \alpha^j(u)$, which is in $U$, whence~$v$ belongs to $U_{k-n}$. Hence $\alpha^k(U_{-n}) \leq U_k\cap U_{k-n}$. Similarly, if $v$ is in $U_k$, then $v = \alpha^k(u)$ where $\alpha^j(u)\in U$ for all $j\in\{0,1,\dots, k\}$ and, if $v$ also belongs to $U_{k-n}$, then $\alpha^{j-k}(v) = \alpha^j(u)\in U$ for all   $j\in \{k,\dots,n\}$, whence $v = \alpha^k(u)$ where $u$ is in $U_{-n}$. For the case when $k\geq n$, note that it has already been shown that $\alpha^n(U_{-n}) = U_n$. Hence $\alpha^k(U_{-n}) = \alpha^{k-n}(U_n)$ for $k\geq n$.
 
The first claim in (\ref{eq:Uminus2}) is proved by induction on $j$, and certainly holds when $j=0$. Note also that proof is only required for $n\geq1$. Assume that it has been shown for some $j$ that $(U_{-n})_j = U_j \cap U_{-n}$. Then
$$
(U_{-n})_{j+1} = U_{-n} \cap \alpha(U_j\cap U_{-n}). 
$$
Applying~(\ref{eq:Uminus1}) twice shows that $\alpha(U_j\cap U_{-n}) = \alpha^{j+1}(U_{-n-j}) = U_{j+1}\cap U_{1-n}$. Hence $(U_{-n})_{j+1} = U_{-n} \cap U_{j+1}$ and the induction continues. That $(U_{-n})_+ = U_+ \cap U_{-n}$ then follows immediately from the definition of $U_+$.
\end{proof}

The special case of (\ref{eq:Uminus1}) when $k=n$, asserting that $\alpha^n(U_{-n}) = U_n$, that has already been highlighted in the above proof will be used most frequently. 

When $G$ is discrete the non-increasing sequence $\{U_n\}_{n\in\mathbb{N}}$ of finite groups eventually stabilizes. The version of this for non-discrete $G$ is that the sequence $[\alpha(U_n) : \alpha(U_n)\cap U_n]$ of positive integers is non-increasing, as shown in the next lemma, and stabilizes at a subgroup~$U_n$ that will subsequently be shown to be tidy above for $\alpha$.
\begin{lemma}
\label{lem:stable}
Let $\alpha\in \End{G}$ and let $U$ be a compact open subgroup of $G$.  Then: 
\begin{enumerate}
 \item $[\alpha(U) : \alpha(U)\cap U] = [U:U_{-1}]$;
 \label{lem:stable1}
 \item $[U_j\cap U_{-l} : U_j\cap U_{-m}] = [U_{-j-l} : U_{-j-m}]$ for all non-negative integers $j$ and $l\leq m$, in particular, $[U_j : U_j\cap U_{-1}] = [U_{-j} : U_{-j-1}]$;
  \label{lem:stable2}
 \item the sequence $\{[U_{j} : U_{j}\cap U_{-1}]\}_{j\geq0}$ is non-increasing; and 
   \label{lem:stable3}
\item it stabilises at the value $[U_+ : U_+\cap U_{-1}]$ when $j$ is such that $U_j\subseteq U_+U_{-1}$.
  \label{lem:stable4}
\end{enumerate}
\end{lemma}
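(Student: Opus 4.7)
My plan is to prove the four parts in order, using Lemma~\ref{lem:Uminus} and in particular the identity $\alpha^k(U_{-n}) = U_k \cap U_{k-n}$ from~(\ref{eq:Uminus1}) (valid for $0 \le k \le n$) as the engine that converts statements about the descending chain $\{U_{-n}\}$ into statements about $\{U_n\}$. Each of (\ref{lem:stable1})--(\ref{lem:stable3}) is a first-isomorphism-theorem computation for a suitable restriction of a power of $\alpha$, while (\ref{lem:stable4}) follows from a direct group-theoretic argument using the assumed factorisation $U_j \subseteq U_+ U_{-1}$.

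For (\ref{lem:stable1}), observe that $\alpha(U_{-1}) = \alpha(U) \cap U$ (if $v = \alpha(u) \in U$ with $u \in U$, then $u \in U_{-1}$), and that $\ker(\alpha|_U) \subseteq U_{-1}$ since $\alpha(u)=\ident$ lies in $U$; hence $\alpha|_U$ descends to a bijection $U/U_{-1} \to \alpha(U)/(\alpha(U)\cap U)$. For (\ref{lem:stable2}), apply~(\ref{eq:Uminus1}) with $k=j$ and $n\in\{j+l,j+m\}$ to get $\alpha^j(U_{-j-l})=U_j\cap U_{-l}$ and $\alpha^j(U_{-j-m})=U_j\cap U_{-m}$. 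Then $\alpha^j$ induces a surjective homomorphism $U_{-j-l}/U_{-j-m} \to (U_j\cap U_{-l})/(U_j\cap U_{-m})$, and injectivity reduces to the assertion: if $u\in U_{-j-l}$ and $\alpha^j(u)\in U_{-m}$, then $u\in U_{-j-m}$. The two hypotheses give $\alpha^i(u)\in U$ for $i\in\{0,\ldots,j+l\}$ and for $i\in\{j,\ldots,j+m\}$ respectively; since $l\le m$ these ranges union to $\{0,\ldots,j+m\}$, as required.

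For (\ref{lem:stable3}), specialising (\ref{lem:stable2}) to $l=0$, $m=1$ and using $U_j\cap U = U_j$ yields $[U_j : U_j\cap U_{-1}] = [U_{-j} : U_{-j-1}]$, so the monotonicity statement reduces to $[U_{-j-1}:U_{-j-2}]\le [U_{-j}:U_{-j-1}]$. This holds because $\alpha$ maps $U_{-j-1}$ into $U_{-j}$ and $U_{-j-2}$ into $U_{-j-1}$, inducing a homomorphism on the quotients whose injectivity repeats the combinatorial observation from (\ref{lem:stable2}). For (\ref{lem:stable4}), under the hypothesis $U_j \subseteq U_+ U_{-1}$, every $u\in U_j$ factors as $u = vw$ with $v\in U_+\subseteq U_j$ and $w\in U_{-1}$; then $w = v^{-1}u \in U_j\cap U_{-1}$, so the inclusion $U_+\hookrightarrow U_j$ descends to a surjection $U_+ \to U_j/(U_j\cap U_{-1})$ with kernel $U_+\cap U_{-1}$, giving the equality $[U_+:U_+\cap U_{-1}] = [U_j:U_j\cap U_{-1}]$. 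The main obstacle is purely clerical: keeping the indices aligned when applying~(\ref{eq:Uminus1}), since $\alpha$ swaps the directions of the chains $\{U_n\}$ and $\{U_{-n}\}$ and one must verify in each invocation that the side condition $0\le k\le n$ is met.
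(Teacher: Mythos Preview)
Your proof is correct and follows the same overall strategy as the paper: each part is established by exhibiting a bijection between the relevant coset spaces, induced either by a power of $\alpha$ or by an inclusion. Your injectivity argument in~(\ref{lem:stable2}) is in fact slightly cleaner than the paper's: you directly track which powers $\alpha^i(u)$ lie in $U$ and take the union of the two index ranges, whereas the paper writes $u = vz$ with $v\in U_{-j-m}$ and $z$ in the kernel of $\alpha^j$, and then argues that $z\in U_-$.

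The one place where your route differs noticeably is~(\ref{lem:stable3}). You first invoke~(\ref{lem:stable2}) to convert to the descending chain, obtaining $[U_j:U_j\cap U_{-1}] = [U_{-j}:U_{-j-1}]$, and then use $\alpha: U_{-j-1}\to U_{-j}$ to produce an injection of coset spaces. The paper instead works directly on the $U_j$ side: the inclusion $U_{j+1}\hookrightarrow U_j$ already induces a well-defined injection $U_{j+1}/(U_{j+1}\cap U_{-1}) \to U_j/(U_j\cap U_{-1})$, since $U_{j+1}\cap(U_j\cap U_{-1}) = U_{j+1}\cap U_{-1}$. This avoids the detour through~(\ref{lem:stable2}) and the second application of $\alpha$, and also sets up~(\ref{lem:stable4}) uniformly, since the same inclusion map $U_+\hookrightarrow U_j$ handles the stabilisation. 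Your approach is equally valid; the paper's is a shade more economical. One minor terminological point: in your argument for~(\ref{lem:stable3}) you speak of ``a homomorphism on the quotients'', but since $U_{-j-2}$ need not be normal in $U_{-j-1}$ these are only coset spaces, so ``map'' or ``injection'' would be more accurate.
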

\begin{proof}
(\ref{lem:stable1}) The map $u(U\cap\alpha^{-1}(U)) \mapsto \alpha(u)(\alpha(U)\cap U)$ is well-defined from $U/U_{-1}$ to $\alpha(U)/(\alpha(U)\cap U)$. It clearly maps onto $\alpha(U)/(\alpha(U)\cap U)$ and is one-to-one because, if  $\alpha(u)(\alpha(U)\cap U) = \alpha(v)(\alpha(U)\cap U)$, then $\alpha(v^{-1}u)\in U$, which implies that $v^{-1}u\in U_{-1}$. 

(\ref{lem:stable2}) Lemma~\ref{lem:Uminus}(\ref{eq:Uminus1}) implies that 
$$
[U_j\cap U_{-l} : U_j\cap U_{-m}] = [\alpha^j(U_{-j-l}): \alpha^j(U_{-j-m})].
$$ 
The claim results by verifying that the map $uU_{-j-m} \mapsto \alpha^j(u)(\alpha^j(U_{-j-m}))$ is a well-defined bijection from $U_{-j-l}/U_{-j-m}$ to $\alpha^j(U_{-j-l})/\alpha^j(U_{-j-m})$. Only the claim that it is one-to-one requires any argument. For this, note that if $u\in U_{-j-l}$ and $\alpha^j(u)\in U_{-j-m}$, then $u =  vz$ for some $v\in U_{-j-m}$ and $z\in U_{-j-l}\cap\ker(\alpha)$. Since $z\in U$ and $\alpha(z) = \ident$, it follows that $z\in U_-$ and $u\in U_{-j-m}$. 

(\ref{lem:stable3}) \& (\ref{lem:stable4}) Define a map $U_{j+1}/(U_{j+1}\cap U_{-1}) \to U_j/(U_j\cap U_{-1})$ by 
$$
u(U_{j+1}\cap U_{-1}) \mapsto u(U_j\cap U_{-1}), \ \ (u\in U_{j+1}).
$$
This map is well-defined and one-to-one, which implies that $[U_{j+1} : U_{j+1}\cap U_{-1}]$ is less than or equal to $[U_j : U_j\cap U_{-1}]$. The similarly defined map from $U_{+}/(U_{+}\cap U_{-1})$ to $U_j/(U_j\cap U_{-1})$ is also well-defined and one-to-one, and is onto if and only if $U_j\subseteq U_+U_{-1}$.  Hence the sequence $\{[U_{j} : U_{j}\cap U_{-1}]\}_{j\geq0}$ stabilizes as soon as $U_j\subseteq U_+U_{-1}$ and the value that it stabilizes at is $[U_{+} : U_{+}\cap U_{-1}]$. 
\end{proof}
\begin{remark}
The conclusion of Lemma~\ref{lem:stable}(\ref{lem:stable4}) that there is a $j\geq0$ such that $U_j$ is contained in $U_+U_{-1}$ also follows by compactness, since the sequence of compact subgroups $\{U_j\}_{j\geq0}$ is decreasing with intersection $U_+$ and because $U_+U_{-1}$ is an open neighbourhood of $U_+$. 
\end{remark}

The next few results establish that a subgroup $U_{-j}$, where $j$ as found in Lemma~\ref{lem:stable} is such that $[U_j : U_j\cap U_{-1}] = [U_+ : U_+\cap U_{-1}]$, is tidy above for $\alpha$.  
\begin{lemma}
\label{lem:newfindN}
Let $N$ be a non-negative  integer such that $U_N\subseteq U_+U_{-1}$. Then
$$
U_{-n} = (U_+\cap U_{-n})U_{-n-1} \text{ for every }n\geq N.
$$
Put $V = U_{-N}$. Then $V$ is an open subgroup of $U$ that satisfies
\begin{equation*}
\label{eq:minus_N}
V_{-n} = (V_{-n})_+V_{-n-1} \text{ for all }n\geq0.
\end{equation*}
\end{lemma}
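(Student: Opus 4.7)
The plan is to establish the first identity by a direct group-theoretic argument that exploits the regressive-sequence description of $U_+$, and then derive the second statement as a reformulation of the first via Lemma~\ref{lem:Uminus}.

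To show $U_{-n} = (U_+\cap U_{-n})U_{-n-1}$ for $n\geq N$, the inclusion $\supseteq$ is immediate since both factors lie in $U_{-n}$. For the forward inclusion, take $u\in U_{-n}$. The special case of Lemma~\ref{lem:Uminus}(\ref{eq:Uminus1}) (the identity $\alpha^n(U_{-n}) = U_n$) places $\alpha^n(u)$ in $U_n$, and since $n\geq N$ and the sequence $\{U_m\}$ is non-increasing, $U_n\subseteq U_N\subseteq U_+U_{-1}$ by hypothesis. So write $\alpha^n(u) = pq$ with $p\in U_+$ and $q\in U_{-1}$.

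The key step is to lift $p$ through $\alpha^n$ to a well-chosen element of $U_+\cap U_{-n}$. Because $p\in U_+$, by definition there is an $\alpha$-regressive sequence $p = p_0,p_1,p_2,\dots$ contained in $U$ with $\alpha(p_{k+1}) = p_k$. Setting $v = p_n$, the tail sequence $p_n,p_{n+1},\dots$ is itself an $\alpha$-regressive sequence in $U$, so $v\in U_+$; and $\alpha^k(v) = p_{n-k}\in U$ for each $0\leq k\leq n$, so $v\in U_{-n}$. Thus $v\in U_+\cap U_{-n}$ and $\alpha^n(v) = p$. I expect this step — threading the definition of $U_+$ to produce a lift that simultaneously lies in $U_{-n}$ — to be the only point of real substance; everything else is bookkeeping.

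It then remains to check $v^{-1}u\in U_{-n-1}$. For $0\leq k\leq n$, both $\alpha^k(v) = p_{n-k}$ and $\alpha^k(u)$ belong to $U$, so $\alpha^k(v^{-1}u)\in U$. For $k = n+1$, $\alpha^n(v^{-1}u) = p^{-1}\alpha^n(u) = q\in U_{-1}$, hence $\alpha^{n+1}(v^{-1}u) = \alpha(q)\in U$. Therefore $v^{-1}u\in U_{-n-1}$ and $u = v(v^{-1}u)\in (U_+\cap U_{-n})U_{-n-1}$, as required.

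For the statement about $V = U_{-N}$, Lemma~\ref{lem:Uminus}(\ref{eq:Uminus0}) gives $V_{-n} = U_{-N-n}$ and $V_{-n-1} = U_{-N-n-1}$, while Lemma~\ref{lem:Uminus}(\ref{eq:Uminus2}) gives $(V_{-n})_+ = (U_{-N-n})_+ = U_+\cap U_{-N-n}$. Since $N+n\geq N$, the first part applied with $n$ replaced by $N+n$ yields $U_{-N-n} = (U_+\cap U_{-N-n})U_{-N-n-1}$, which is exactly $V_{-n} = (V_{-n})_+V_{-n-1}$. Openness of $V$ in $U$ is immediate from the fact that each $U_{-k}$ is open in $U$.
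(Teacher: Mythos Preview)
Your proof is correct and follows essentially the same approach as the paper's: factor $\alpha^n(u)\in U_n\subseteq U_+U_{-1}$, lift the $U_+$-factor back through $\alpha^n$ via an $\alpha$-regressive sequence to obtain an element of $U_+\cap U_{-n}$, and verify the residual factor lies in $U_{-n-1}$; the second statement is then reduced to the first via Lemma~\ref{lem:Uminus} in both arguments. The only cosmetic difference is that the paper separately lifts the $U_{-1}$-factor (using Lemma~\ref{lem:Uminus}(\ref{eq:Uminus1})) and then absorbs a kernel element, whereas you check $v^{-1}u\in U_{-n-1}$ directly power-by-power---a slight streamlining but not a different idea.
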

\begin{proof}
If $n\geq N$, then $U_n\subseteq U_+U_{-1}$. Consider $v\in U_{-n}$. Then $\alpha^n(v)$ belongs to $U_n$ and so is equal to $yw$ for some $y\in U_+$ and $w\in U_{-1}$. By definition of $U_+$, $y = \alpha^n(y')$ for some $y'\in U_+\cap U_{-n}$. Furthermore, since $U_n$ is a group that contains $U_+$,  it follows that $w\in U_n\cap U_{-1}$ and thence, by Lemma~\ref{lem:Uminus}(\ref{eq:Uminus1}), that there is $w'\in U_{-n-1}$ such that $w = \alpha^n(w')$. Hence 
$$
v = y'w'z\text{ for some }z\in \ker(\alpha^n).
$$
The element $z$ belongs to $U_{-n}$ because $U_{-n}$ is a group and $v$, $y'$ and $w'$ do. Since $\alpha^{n+1}(z) = \ident$, it is in fact the case that $z\in U_{-n-1}$. Then, $w'z\in U_{-n-1}$ and it has been shown that $U_{-n}\subseteq (U_+\cap U_{-n})U_{-n-1}$. The reverse inclusion holds because $U_+\cap U_{-n}$ and $U_{-n-1}$ are both contained in $U_{-n}$. 

Let $V = U_{-N}$. Then it follows immediately from Lemma~\ref{lem:Uminus}(\ref{eq:Uminus0}) and the first part that 
$$
V_{-n} = (V_+\cap V_{-n})V_{-n-1} \text{ for every }n\geq0.
$$
Applying (\ref{eq:Uminus2}) then yields the claim.
\end{proof}

\begin{proposition}
\label{prop:equivalents}
Let $\alpha\in\End{G}$ and $U$ be a compact, open subgroup of $G$. Then the following are equivalent.
\begin{enumerate}
\item $U = U_+U_{-1}$.
\label{prop:equivalents1}
\item $U_{-n} = (U_{-n})_+U_{-n-1}$ for all $n\geq0$. 
\label{prop:equivalents2}
\item $U = U_+U_{-n}$ for all $n\geq0$.
\label{prop:equivalents3}
\item $U = U_+U_-$.
\label{prop:equivalents4}
\end{enumerate}
\end{proposition}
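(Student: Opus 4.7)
The plan is to establish the cycle of implications $(1) \Rightarrow (2) \Rightarrow (3) \Rightarrow (4) \Rightarrow (1)$. Three of these are essentially formal consequences of material already in place, and only $(3) \Rightarrow (4)$ requires genuine work.

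For $(1) \Rightarrow (2)$, I observe that $(1)$ is exactly the hypothesis $U_N \subseteq U_+ U_{-1}$ of Lemma~\ref{lem:newfindN} in the case $N = 0$, since $U_0 = U$. The conclusion there reads $U_{-n} = (U_+ \cap U_{-n}) U_{-n-1}$ for all $n \geq 0$, and combining this with the identity $(U_{-n})_+ = U_+ \cap U_{-n}$ from Lemma~\ref{lem:Uminus}(\ref{eq:Uminus2}) gives $(2)$. For $(2) \Rightarrow (3)$ I induct on $n$: the case $n = 0$ is trivial because $U_{-0} = U$, and for the inductive step I use that $(U_{-n})_+ \leq U_+$, so $(2)$ yields $U_{-n} \leq U_+ U_{-n-1}$, whence $U = U_+ U_{-n} \leq U_+ U_{-n-1}$. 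The implication $(4) \Rightarrow (1)$ is immediate from $U_- \leq U_{-1}$.

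The crux is $(3) \Rightarrow (4)$, which I would handle by a standard compactness argument. Fix $u \in U$ and set $X_n = U_+ \cap u U_{-n}$. Each $U_{-n}$ is open in $U$ with finite index and hence closed in $G$, so $X_n$ is a closed subset of the compact group $U_+$. Condition $(3)$ says each $X_n$ is non-empty (it is precisely the set of $U_+$-representatives of $u$ modulo $U_{-n}$), and the sequence $\{X_n\}$ is decreasing because $\{U_{-n}\}$ is. The finite intersection property then yields some $x \in \bigcap_n X_n$, and such an $x$ lies in $U_+$ and satisfies $x^{-1} u \in \bigcap_n U_{-n} = U_-$. Therefore $u = x \cdot (x^{-1} u) \in U_+ U_-$, as required.

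The main obstacle is this last step: in the automorphism setting the analogous passage from arbitrarily fine finite-step factorizations to a single factorization through $U_-$ would be routine, but here I have to be careful that the descending sets $X_n$ really are closed subsets of a genuinely compact space, which is what makes it crucial to have introduced $U_+$ via the regressive-sequence definition (so that $U_+$ is compact) rather than by some formula that would only a priori give a closed subgroup of $G$.
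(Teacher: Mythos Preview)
Your proof is correct and follows essentially the same route as the paper: the same cycle $(1)\Rightarrow(2)\Rightarrow(3)\Rightarrow(4)\Rightarrow(1)$, with $(1)\Rightarrow(2)$ coming from Lemma~\ref{lem:newfindN} (at $N=0$), $(2)\Rightarrow(3)$ by induction using $(U_{-n})_+\leq U_+$, $(3)\Rightarrow(4)$ by the compactness argument on the decreasing family $uU_{-n}\cap U_+$, and $(4)\Rightarrow(1)$ trivially. Your closing remark about why compactness of $U_+$ matters is apt but not needed for the argument itself.
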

\begin{proof}
That (\ref{prop:equivalents1}) implies (\ref{prop:equivalents2}) is Lemma~\ref{lem:newfindN}. 

Assume that (\ref{prop:equivalents2}) holds. 
Then in particular $U = U_+ U_{-1}$ and induction on $n$, using the fact that $(U_{-n})_+\leq U_+$, implies (\ref{prop:equivalents3}). 

Next assume that (\ref{prop:equivalents3}) holds and let $u\in U$. Then $uU_{-n} \cap U_+$ is a non-empty  compact set for each $n\geq0$ and $uU_{-n-1} \cap U_+\leq uU_{-n} \cap U_+$. Hence 
$$
\bigcap_{n\geq0} uU_{-n} \cap U_+ \ne \emptyset.
$$ 
Let $x$ be in this intersection, so that $x\in U_+$ in particular. In addition, $x^{-1}u\in U_{-n}$ for every $n\geq0$, whence $x^{-1}u\in U_-$ and (\ref{prop:equivalents4}) is established.

That (\ref{prop:equivalents4}) implies (\ref{prop:equivalents1}) is clear.
\end{proof}

\begin{remark}  Note that since
$U$, $U_+$ and $U_-$ are groups, they are closed under taking inverses. Since moreover 
$(u_+u_-)^{-1} = u_-^{-1}u_+^{-1}$, it follows that $U$ is tidy above for the endomorphism $\alpha$ if and only if $U = U_-U_+$. 
\end{remark}

The following is an immediate consequence of Lemma~\ref{lem:stable} and Proposition~\ref{prop:equivalents}.
\begin{proposition}
\label{prop:propertyTA}
Let $G$ be a totally disconnnected locally compact group,  let
$\alpha$ be in $\End{G}$ and let $U$ be a compact open subgroup of $G$. Then there is an integer $N$  such that
$V := U_{-N}$ is tidy above for $\alpha$. 
For this $V$,  
$$
\left[\alpha(V) : V\cap \alpha(V)\right] \leq 
\left[\alpha(U) : U\cap \alpha(U)\right]
$$ 
with equality if and
only if $U$ satisfies~\TA\ for $\alpha$.
\endproof
\end{proposition}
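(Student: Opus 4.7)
The plan is to assemble the proposition directly from Lemma~\ref{lem:stable}, Lemma~\ref{lem:newfindN}, and Proposition~\ref{prop:equivalents}; the statement is essentially a bookkeeping corollary of these. First I would choose $N\geq 0$ so that $U_N\subseteq U_+U_{-1}$; such an $N$ exists by the remark following Lemma~\ref{lem:stable}, since $U_+U_{-1}$ is an open neighbourhood of $U_+=\bigcap_j U_j$ and the $U_j$ are compact and decreasing. Set $V:=U_{-N}$. Lemma~\ref{lem:newfindN} gives $V_{-n}=(V_{-n})_+V_{-n-1}$ for all $n\geq0$, in particular $V=V_+V_{-1}$. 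By Proposition~\ref{prop:equivalents} this upgrades to $V=V_+V_-$, so $V$ is tidy above for $\alpha$.

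Next I would compute the displacement index of $V$ and compare it to that of $U$. Using Lemma~\ref{lem:Uminus}(\ref{eq:Uminus0}), $V_{-1}=(U_{-N})_{-1}=U_{-N-1}$, so Lemma~\ref{lem:stable}(\ref{lem:stable1}) yields
$$
[\alpha(V):\alpha(V)\cap V]=[V:V_{-1}]=[U_{-N}:U_{-N-1}].
$$
The identity in Lemma~\ref{lem:stable}(\ref{lem:stable2}), applied with $j=N$, $l=0$, $m=1$, rewrites this as $[U_N:U_N\cap U_{-1}]$. By Lemma~\ref{lem:stable}(\ref{lem:stable3}) the sequence $\{[U_j:U_j\cap U_{-1}]\}_{j\geq0}$ is non-increasing, so this quantity is at most $[U_0:U_0\cap U_{-1}]=[U:U_{-1}]$, which by Lemma~\ref{lem:stable}(\ref{lem:stable1}) again equals $[\alpha(U):\alpha(U)\cap U]$. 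This establishes the desired inequality.

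For the equality clause, I would observe that the non-increasing sequence $\{[U_j:U_j\cap U_{-1}]\}$ takes the same value at $j=0$ and at $j=N$ precisely when every intermediate value equals this common value. Since for $j\geq N$ we have $U_j\subseteq U_N\subseteq U_+U_{-1}$, Lemma~\ref{lem:stable}(\ref{lem:stable4}) identifies the stable value as $[U_+:U_+\cap U_{-1}]$ and characterises constancy at index $j$ by the inclusion $U_j\subseteq U_+U_{-1}$. Read at $j=0$, this becomes $U=U_+U_{-1}$, which by Proposition~\ref{prop:equivalents} is equivalent to $U$ satisfying \TA\ for $\alpha$.

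I do not anticipate a substantive obstacle: every ingredient has been proved above, and the only care needed is in matching the index identity in Lemma~\ref{lem:stable}(\ref{lem:stable2}) and the stabilisation criterion in Lemma~\ref{lem:stable}(\ref{lem:stable4}) with the correct choice of $N$.
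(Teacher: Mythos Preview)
Your proof is correct and follows exactly the route the paper intends: the paper itself records the proposition as an immediate consequence of Lemma~\ref{lem:stable} and Proposition~\ref{prop:equivalents} (with Lemma~\ref{lem:newfindN} implicit in the latter), and you have simply spelled out the bookkeeping that the paper leaves to the reader. Your use of Lemma~\ref{lem:stable}(\ref{lem:stable2}) to identify $[U_{-N}:U_{-N-1}]$ with $[U_N:U_N\cap U_{-1}]$ and of Lemma~\ref{lem:stable}(\ref{lem:stable4}) to handle the equality case is precisely what is required.
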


\begin{lemma}  
\label{lem:indexreduced} 
Let $\alpha\in \End{G}$ and let $U$ be a compact open subgroup of $G$.  Then 
$$
\left[ \alpha
(U_+) : U_+ \right] \leq \left[ \alpha(U) : \alpha (U) \cap U \right ]
$$ 
with equality if and only if $U$ is tidy above for $\alpha$.
\end{lemma}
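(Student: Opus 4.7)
The plan is to set up a natural coset injection and read off both the inequality and the equality condition from it. The tool from Lemma~\ref{lem:Uplus} that $\alpha(U_+) \cap U = U_+$ and $\alpha(U_+) \supseteq U_+$ is what makes the index $[\alpha(U_+) : U_+]$ meaningful: $U_+ = \alpha(U_+) \cap U$ is open in the compact group $\alpha(U_+)$, so the index is a positive integer.

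First I would define the map
\[
\Phi : \alpha(U_+)/U_+ \longrightarrow \alpha(U)/(\alpha(U) \cap U), \qquad xU_+ \mapsto x(\alpha(U) \cap U),
\]
for $x \in \alpha(U_+)$. This is well-defined because $U_+ \subseteq U$ and $U_+ \subseteq \alpha(U_+) \subseteq \alpha(U)$, so $U_+ \subseteq \alpha(U) \cap U$. It is injective: if $x,y \in \alpha(U_+)$ satisfy $y^{-1}x \in \alpha(U) \cap U$, then since $y^{-1}x \in \alpha(U_+)$ as well, we get $y^{-1}x \in \alpha(U_+) \cap U = U_+$. Injectivity of $\Phi$ gives the inequality $[\alpha(U_+):U_+] \leq [\alpha(U):\alpha(U) \cap U]$, with equality exactly when $\Phi$ is also surjective, i.e.\ when $\alpha(U) = \alpha(U_+)\,(\alpha(U) \cap U)$.

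Next I would translate this surjectivity condition into the statement that $U = U_+U_{-1}$, which by Proposition~\ref{prop:equivalents} is equivalent to $U$ being tidy above. One direction is immediate: if $U = U_+U_{-1}$, then $\alpha(U) = \alpha(U_+)\alpha(U_{-1})$, and by the case $k=n=1$ of Lemma~\ref{lem:Uminus}(\ref{eq:Uminus1}) we have $\alpha(U_{-1}) = U_1 = \alpha(U) \cap U$, giving the desired equality.

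The potentially delicate direction is the converse, since $\alpha$ need not be injective and hence it is not automatic to lift a factorization of $\alpha(U)$ back to $U$. Assume $\alpha(U) = \alpha(U_+)(\alpha(U) \cap U)$ and take $u \in U$. Write $\alpha(u) = \alpha(v)\alpha(w)$ with $v \in U_+$ and $w \in U_{-1}$ (the factor in $\alpha(U) \cap U$ is $\alpha(w)$ for some such $w$ by the definition of $U_{-1}$). Then $z := (vw)^{-1}u$ lies in $U$ and in $\ker \alpha$; since $\alpha(z) = \ident \in U$, $z$ belongs to $U_{-1}$, so $u = v\,(wz) \in U_+U_{-1}$. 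The reverse inclusion is trivial, so $U = U_+U_{-1}$.

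The main obstacle is this last converse step, specifically the need to absorb the kernel contribution $z$ into $U_{-1}$; the key observation is the trivial but crucial remark that elements of $U \cap \ker\alpha$ automatically sit inside $U_{-1}$ because $\ident \in U$. Once that is noted the argument is routine. Combining the two directions with the equivalences of Proposition~\ref{prop:equivalents} (so that $U = U_+U_{-1}$ coincides with the \TA\ condition $U = U_+U_-$) completes the proof.
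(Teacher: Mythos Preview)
Your proof is correct and follows essentially the same approach as the paper: the same coset injection $\Phi$ (the paper calls it $\psi$), the same use of $\alpha(U_+)\cap U = U_+$ from Lemma~\ref{lem:Uplus} for injectivity, and the same kernel-absorption trick $z\in U\cap\ker\alpha\subseteq U_{-1}$ for the converse of the equality condition. The only cosmetic difference is that you spell out the ``reverse argument'' via $\alpha(U_{-1}) = U_1 = \alpha(U)\cap U$ from Lemma~\ref{lem:Uminus}(\ref{eq:Uminus1}), whereas the paper simply declares it clear.
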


\begin{proof}  Define a map $\psi : \alpha (U_+)/U_+ \rightarrow 
\alpha (U) / (\alpha (U) \cap U )$ by 
$$
\psi(uU_+) = u \left( \alpha (U) \cap U \right) \text{ for }u\in \alpha
(U_+).
$$
 Then $\psi$ is well-defined because
$U_+\subset \alpha (U) \cap U$. To see that $\psi$ is injective, let
$u,v\in \alpha(U_+)$ be such that   $\psi (uU_+) = \psi (vU_+)$. Then
$u(\alpha(U)\cap U)    = v(\alpha(U)\cap U)$, whence 
$v^{-1}u \in \alpha(U)\cap U$. 
Since $u$ and $v$ belong to $\alpha(U_+)$, it follows that 
$$
	v^{-1}u \in  U\cap \alpha(U_+) = U_+, \text{ by Lemma~\ref{lem:Uplus}}.  
$$
Hence $uU_+= vU_+$ and $\psi$ is injective.

Since $\psi$ is injective,  $\left [ \alpha (U_+) : U_+
\right ] \leq  \left [\alpha (U) : \alpha (U) \cap U \right ]$. Equality occurs if and only if $\psi$ is onto, which is the case if and only if $\alpha(U) = \alpha (U_+)(\alpha (U) \cap U)$. Suppose that $\alpha(U) = \alpha (U_+)(\alpha (U) \cap U)$ and consider $u\in U$. Then there are $v\in U_+$ and $w\in \alpha(u)\cap U$ such that $\alpha(u) = \alpha(v)w$. There is $w'\in U_{-1}$ such that $w = \alpha(w')$, whence $\alpha(u) = \alpha(v)\alpha(w')$ and $ u = vw'z$ for some $z\in \ker(\alpha)$. Since $z$ belongs to $U$ and $\alpha(z) = \ident$, it follows that $z\in U_{-1}$ and we have shown that $U = U_+U_{-1}$. Then $U$ is tidy above for $\alpha$ by Proposition~\ref{prop:equivalents}. The reverse argument is clear. 
\end{proof}

Examples illustrating Proposition~\ref{prop:propertyTA} for various automorphisms $\alpha$ may be found in~\cite{Wi:structure}. The procedure for finding a subgroup tidy for $\alpha$ will be carried through in the present paper for the following example, in which $\alpha$ is an endomorphism that is neither one-to-one nor onto. 
\begin{example}
\label{ex:compact_example}
Let $G$ be the additive group of the topological ring $F_p[[t]]$, which is compact. For $g = \sum_{n\geq0} g_nt^n$, define $\alpha(g)$ by
$$
\alpha(g)_n =
\begin{cases}
g_{n+1}+g_{n+2}, & \text{ if $n$ is even}\\
\bar{0}, & \text{ if $n$ is odd}
\end{cases}.
$$
Then $\alpha\in\End{G}$. Let $U$ be the open subgroup of $G$
$$
U = \left\{ g\in G \mid g_2 = \bar{0}\text{ and } g_6 = g_{8}\right\}.
$$
Then calculation shows that 
\begin{align*}
\alpha(U) &= \left\{ g\in G \mid g_n = \bar{0}\text{ if $n$ is odd}\right\},\\
U_1 &= \left\{ g\in G \mid g_n=\bar{0}\text{ if $n$ is odd or equals }2 \text{ and }g_6=g_{8}\right\},\\
U_2 &= \left\{ g\in G \mid g_n=\bar{0}\text{ if $n$ is odd or equals }0\text{ or }2 \text{ and }g_4 = g_6=g_{8}\right\},\\
\text{and }\ \  U_3 &= U_+ = \left\{ g\in G \mid g_n=\bar{0}\text{ if $n$ is odd or less than }9\right\}.\\
{Similarly,}
U_{-1} &= \left\{ g\in G \mid g_2 = \bar{0} = g_3+g_4 \text{ and } g_6 = g_{8},\ g_7+g_{8} = g_{9}+g_{10}\right\},\\
U_{-2} &= \left\{ g\in G \mid g_2 = \bar{0} = g_3+g_4 = g_5+g_6\text{ and } g_6 = g_{8},\ g_7+g_{8} = g_{9}+g_{10} \right.\\
& \phantom{= } \qquad \qquad\  \left. = g_{11}+g_{12}  \right\},\\
U_{-3} &= \left\{ g\in G \mid g_2 = \bar{0} = g_3+g_4 = g_5 + g_6 = g_7+g_{8} = g_{9}+g_{10} = g_{11}+g_{12} \right.\\
& \phantom{=} \qquad \qquad\ \left.  = g_{13}+g_{14}  \text{ and } g_6 = g_{8} \right\},\\
\text{and }\ \ U_- &= \left\{ g\in G \mid  g_2  = \bar{0} = g_{2j+1} + g_{2j+2} \text{ for all }j\geq 1  \text{ and } g_6 = g_{8}\right\}.
\end{align*}
Hence $[\alpha(U) : \alpha(U)\cap U] = p^2$ and $U_2\not\leq U_+U_{-1}$ but $U_3\leq U_+U_{-1}$. Note too that $\alpha(U_-) = \left\{ g\in G \mid g_n = \bar{0}\text{ if } n\geq 1\right\}$ and $\alpha^2(U_-) = \triv$.

The subgroup $V = U_{-3}$ is tidy above for $\alpha$. We have $V_- = U_-$ and 
$$
V_+ = \left\{ g\in G \mid g_n = \bar{0} \text{ if } n\leq 15 \text{ or is odd}\right\},
$$
so that $V = V_+V_-$, and $[\alpha(V) : \alpha(V)\cap V] = p$. 
\end{example}

\section{A compact $\alpha$-stable subgroup that contains bounded $\alpha$-orbits}
\label{sec:V0}

Passing, as done in the previous section, from a given compact open subgroup~$U$ to a subgroup $V$
satisfying \TA\ might reduce the index of displacement but may not minimise it. Indeed, the group $G$ in Example~\ref{ex:compact_example} is compact, so that the group itself is minimizing and the scale of $\alpha$ is~1, whereas the displacement index of $V$ is~$p$. The displacement index $\ind{\alpha(V)}{V\cap \alpha(V)}$ is larger than it might
be when an $\alpha$-orbit leaves $V$ and then returns to it, and in the present section it is shown that there is  a single compact, $\alpha$-stable subgroup, denoted $L_V$, that contains all such $\alpha$-orbits. In the next section it will be seen how to combine~$V$ with $L_V$ so as to preserve property \TA.

The subgroup $L_V$ is defined as follows. The first few definitions and results of this section hold for all compact, open subgroups of $G$, not just those satisfying \TA, and the groups are denoted by $U$ rather than $V$ to highlight that. 
\begin{definition} 
\label{de:curlyL}
Let $\alpha\in\End{G}$ and $U$ be a compact open subgroup of $G$. Define
$$
\mathcal{L}_U = \left\{ x \in G : \exists y\in U_+\text{ and } m,n\in
{\mathbb{N}}\text{ with } \alpha^{m}(y)=x 
 \text{ and }\alpha^n (x) \in U_-  \right\},
 $$
 and $L_U = \overline{\mathcal{L}_U}$.
\end{definition}
In words, an element $x$ belongs to $\mathcal{L}_U$ if and only if there is $y\in U_+$ such that $x$ is in the orbit $\{\alpha^n(y)\}_{n\geq0}$ and there is $N\geq0$ such that $\alpha^n(y)\in U$ for all $n\geq N$. A particular case that is important when $\alpha$ is an endomorphism is where $x\in G$ is equal to $\alpha^m(y)$ for some $y\in U_+$ and $x$ is in $\ker(\alpha^k)$ for some $k\geq0$. 

\begin{proposition}
\label{prop:Lsubgroup} The set ${\mathcal L}_U$ is an $\alpha$-stable subgroup of $G$, and $L_U$ is a subgroup such that $\alpha(L_U)\leq L_U$ and is dense.
\end{proposition}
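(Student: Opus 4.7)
My plan is to verify each of the three assertions separately, working first with $\mathcal{L}_U$ and then passing to its closure.

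First I would show that $\mathcal{L}_U$ is a subgroup. Given $x_1,x_2 \in \mathcal{L}_U$ with $x_i = \alpha^{m_i}(y_i)$ and $\alpha^{n_i}(x_i) \in U_-$, the key trick is to raise the ``$\alpha^{m_i}$'' exponents to a common value by walking along $\alpha$-regressive sequences: since $y_i\in U_+$ has a regressive sequence in $U$, and since all elements of a regressive sequence themselves belong to $U_+$, we can write $y_i = \alpha^{m-m_i}(y_i')$ with $m=\max(m_1,m_2)$ and $y_i'\in U_+$. Then $x_1x_2^{-1} = \alpha^m(y_1'(y_2')^{-1})$, and $y_1'(y_2')^{-1}\in U_+$ because $U_+$ is a subgroup. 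For the other condition, set $n=\max(n_1,n_2)$; since $\alpha(U_-)\subseteq U_-$ (clear from the definition of $U_-$), we have $\alpha^n(x_i)\in U_-$, hence $\alpha^n(x_1x_2^{-1})\in U_-$. So $x_1x_2^{-1}\in \mathcal{L}_U$.

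Next I would show $\alpha$-stability, i.e.\ $\alpha(\mathcal{L}_U)=\mathcal{L}_U$. The inclusion $\alpha(\mathcal{L}_U)\subseteq \mathcal{L}_U$ is immediate from $x = \alpha^m(y)\Rightarrow \alpha(x)=\alpha^{m+1}(y)$ together with $\alpha(U_-)\subseteq U_-$. For the reverse inclusion, let $x=\alpha^m(y)\in \mathcal{L}_U$ with $y\in U_+$. If $m\geq 1$, then $x'=\alpha^{m-1}(y)$ lies in $\mathcal{L}_U$ (use $y\in U_+$ and $\alpha^{n+1}(x')=\alpha^n(x)\in U_-$) and satisfies $\alpha(x')=x$. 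If $m=0$, then $x=y\in U_+$, so $y$ has an $\alpha$-regressive sequence $\{y_k\}_{k\in\mathbb{N}}\subset U$; the tail $\{y_{k+1}\}_{k\in\mathbb{N}}$ is an $\alpha$-regressive sequence for $y_1$, so $y_1\in U_+$, and $\alpha(y_1)=y=x$, so $y_1\in\mathcal{L}_U$ and $\alpha(y_1)=x$.

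Finally I would deduce the statements about $L_U$. Since the closure of a subgroup of a topological group is again a subgroup, $L_U$ is a subgroup of $G$. Continuity of $\alpha$ gives
\[
\alpha(L_U)=\alpha(\overline{\mathcal{L}_U})\subseteq \overline{\alpha(\mathcal{L}_U)}=\overline{\mathcal{L}_U}=L_U,
\]
and density of $\alpha(L_U)$ in $L_U$ follows because $\alpha(L_U)\supseteq \alpha(\mathcal{L}_U)=\mathcal{L}_U$, which is dense in $L_U$ by construction.

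I do not foresee a serious obstacle; the only slightly delicate point is the closure argument for the group law in $\mathcal{L}_U$, where one must exploit regressive sequences to equalise the two exponents $m_1,m_2$ — this is exactly the reason $U_+$ had to be redefined in Definition~\ref{defn:Uplus}, so the machinery is tailored for precisely this step.
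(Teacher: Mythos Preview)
Your proof is correct and follows essentially the same approach as the paper's, which is very terse: the paper simply observes that $\alpha$-stability holds because every $y\in U_+$ has a preimage in $U_+$ (Lemma~\ref{lem:Uplus}), that $\mathcal{L}_U$ is a subgroup ``since $U_+$ and $U_-$ are groups,'' and that the claims for $L_U$ follow by continuity. Your argument spells out the exponent-equalising step via regressive sequences that the paper leaves implicit, but the underlying idea is the same.
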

\begin{proof}
It is clear that $\alpha({\mathcal L}_U) \subseteq {\mathcal L}_U$ and the reverse inclusion also holds because for each $y\in U_+$ there is $y'\in U_+$ such that $y = \alpha(y')$. That $\alpha(L_U)\leq L_U$ and is dense follows because $\alpha$ is continuous. Since $U_+$ and $U_-$ are groups,
${\mathcal L}_U$ is a subgroup of~$G$ and, consequently, so is $L_U$. 
\end{proof}
Every element of $\mathcal{L}_U$ has a bounded $\alpha$-regressive sequence by definition. It will follow from compactness of $L_U$ and the proposition that elements of $L_U$ also have bounded $\alpha$-regressive sequences, and hence in particular that $L_U$ is $\alpha$-stable. 

The proof  that ${L}_U$ is compact involves consideration of when the orbit $\{\alpha^n(y)\}_{n\geq0}$ leaves $U_+$ and the distance before it arrives in $U_-$. Here are the (temporarily) necessary definitions and notation.  

\begin{definition}
\label{de:exposure}
Let $U$ be a compact open subgroup of $G$.
\begin{enumerate}
\item Define, for $x\in {\mathcal L}_U$, 
$$
\lambda_U(x) = 
\begin{cases} 
-\max\{n\in\mathbb{N} \mid \alpha^n(x) \in U_+\}, & \text{ if } x\in U_+\\
\min\{n\in \mathbb{N}\mid \alpha^{-n}(x)\cap U_+\ne \emptyset \}, & \text{ if } x\not\in U_+
\end{cases}.
$$
\item The  \emph{exposure} of $x \in {\mathcal L}_U$, denoted $e_U(x)$, is the smallest non-negative integer $m$ such that there is $z\in U_+$ with: $x\in \{\alpha^j(z)\}_{j\in\mathbb{N}}$; $\alpha^k(z)\in U_+$; and 
$\alpha^{k+m+1} (z) \in U_-$.
\item ${\mathcal{E}}_U = 
\big(\alpha(U_+) \backslash U_+ \big)\cap {\mathcal L}_U$.
\end{enumerate}
\end{definition}

Every element of $\mathcal{L}_U$ belongs to an orbit that starts in $U_+$ and passes into $U_-$, where it stays, and the integer $\lambda_U(x)$ records the last point relative to $x$ where such an orbit belongs to $U_+$. Thus $\lambda_U(x)$ is equal to~1 if and only if $x$ is in ${\mathcal{E}}_U$. If $e_U(x) = 0$ there is an $\alpha$-orbit containing $x$ that never leaves $U$, in which case $x$ belongs to $U_+\cap U_-$. 

The set ${\mathcal E}_U$
consists of those points in ${\mathcal L}_U$ where $\alpha$-orbits first depart from $U$. For each $x\in {\mathcal L}_U\setminus U_+$ there is $z\in U_+$ with $\alpha(z)\in {\mathcal E}_U$ and $x = \alpha^{\lambda_U(x)}(z)$.  Note that, if $z\in \mathcal{E}_U$, then $\alpha^{e_U(z)}(z)\in U_-$ and $\alpha^k(z)\not\in U_-$ for every $k<e_U(z)$. 

\begin{lemma}
\label{lem:reduction}
 Let $x_1, x_2\in {\mathcal L}_U$, and suppose that there are  $y_i\in {\mathcal{E}}_U$ with $y_1y_2^{-1}$ belonging to $U_+$ and with $x_i = \alpha^m(y_i)$ for some $m\geq0$. Then $\nu_U(x_1x_2^{-1}) \leq m$.
\end{lemma}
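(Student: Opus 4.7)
The plan is to exploit the homomorphism property of $\alpha^m$ together with the hypothesis $y_1y_2^{-1}\in U_+$ to express $x_1x_2^{-1}$ as $\alpha^m$ of an element of $U_+$, and then read off the bound on $\lambda_U(x_1x_2^{-1})$ (presumably what $\nu_U$ abbreviates) directly from Definition~\ref{de:exposure}.

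First I would set $y = y_1y_2^{-1}$, which lies in $U_+$ by hypothesis, and compute
\[
x_1x_2^{-1} = \alpha^m(y_1)\alpha^m(y_2)^{-1} = \alpha^m(y).
\]
Before applying the definition of $\lambda_U$, I need to know that $x_1x_2^{-1}$ in fact belongs to $\mathcal{L}_U$, so that $\lambda_U$ is defined on it. Because each $y_i$ lies in $\mathcal{E}_U\subseteq \mathcal{L}_U$, there exist $N_i\in\mathbb{N}$ with $\alpha^{N_i}(y_i)\in U_-$; taking $N=\max\{N_1,N_2\}$ and using that $U_-$ is a subgroup gives $\alpha^N(y) = \alpha^N(y_1)\alpha^N(y_2)^{-1}\in U_-$. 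Combined with $y\in U_+$ and the identity $\alpha^m(y)=x_1x_2^{-1}$, this exhibits $y$ as an element of $U_+$ whose orbit passes through $x_1x_2^{-1}$ and eventually enters $U_-$, so $x_1x_2^{-1}\in\mathcal{L}_U$.

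It then remains to bound $\lambda_U(x_1x_2^{-1})$ by $m$. If $x_1x_2^{-1}\in U_+$, the bound is trivial since $\lambda_U$ is non-positive there. Otherwise, by the second clause of the definition, $\lambda_U(x_1x_2^{-1})$ is the least $n\in\mathbb{N}$ with $\alpha^{-n}(x_1x_2^{-1})\cap U_+\ne\emptyset$; the element $y\in U_+$ satisfies $\alpha^m(y) = x_1x_2^{-1}$, so $y\in \alpha^{-m}(x_1x_2^{-1})\cap U_+$ witnesses that this minimum is at most $m$.

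I do not expect any serious obstacle here: the statement is essentially a consequence of $\alpha^m$ being a homomorphism together with $U_+$ and $U_-$ being subgroups, plus the membership condition defining $\mathcal{L}_U$. The only subtlety is ensuring that the product $y_1y_2^{-1}$, which lies in $U_+$ by hypothesis but need not itself lie in $\mathcal{E}_U$, still produces an orbit that reaches $U_-$; this is handled uniformly by taking the larger of the two exposures of $y_1$ and $y_2$ and using that $U_-$ is closed under the group operation.
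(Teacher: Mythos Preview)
Your proposal is correct and follows exactly the same approach as the paper, which gives a one-line proof: ``By hypothesis, $x_1x_2^{-1} = \alpha^m(y_1y_2^{-1})$ where $y_1y_2^{-1}\in U_+$.'' Your added verification that $x_1x_2^{-1}\in\mathcal{L}_U$ is unnecessary, since $\mathcal{L}_U$ is already a subgroup by Proposition~\ref{prop:Lsubgroup} and $x_1,x_2\in\mathcal{L}_U$; otherwise your argument is simply a spelled-out version of the paper's, including the correct identification of $\nu_U$ with $\lambda_U$.
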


\begin{proof}
By hypothesis, $x_1x_2^{-1} = \alpha^m(y_1y_2^{-1})$ where $y_1y_2^{-1}\in U_+$.
\end{proof}

The set $U_+{\mathcal{E}}_U$ consists of a finite number, $n_U$, of
$U_+$-cosets because ${\mathcal{E}}_U\subset \alpha(U_+)$ and
$\ind{\alpha(U_+)}{U_+}$ is finite. Choose from each of these
$U_+$-cosets a representative, $g_i$,
$i\in \{1, 2, \ldots, n_U\}$, such that $g_i$ belongs to ${\mathcal{E}}_U$
and satisfies
\begin{equation}
\label{eq:chooseg}
e_U(g_i) = \min \left \{ e_U(x): x \in {\mathcal{E}}_U \cap U_+ g_i
\right \}.
\end{equation}
Since $g_i\not\in U_+\cap U_-$, $e_U(g_i)\geq1$ for each $i$.

The compactness of $L_U$ will be an immediate consequence of the next result.

\begin{lemma}
\label{lem:factorscriptL}
Let $U$ be a compact open subgroup of $G$. Define 
$$
E_U=\max\left\{ e_U(g_i)\mid i\in\{1,2,\dots , n_U\}\right\}.
$$
Then 
$$
{\mathcal L}_U = (\alpha^{E_U}(U_+)
\cap {\mathcal L}_U)(U_-\cap {\mathcal L}_U).
$$
\end{lemma}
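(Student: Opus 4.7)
The inclusion $\supseteq$ is immediate from Proposition~\ref{prop:Lsubgroup}, since both $\alpha^{E_U}(U_+) \cap \mathcal{L}_U$ and $U_- \cap \mathcal{L}_U$ are subgroups of $\mathcal{L}_U$. For $\subseteq$, the plan is to take $x \in \mathcal{L}_U$ and induct on $\lambda_U(x)$, with base case $\lambda_U(x) \leq E_U$ and inductive step $\lambda_U(x) > E_U$.

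For the base case, write $x = \alpha^{\ell}(y)$ with $y \in U_+$ and $\ell = \max(\lambda_U(x),0) \leq E_U$; iterating $\alpha(U_+) \supseteq U_+$ (Lemma~\ref{lem:Uplus}) gives $x \in \alpha^{\ell}(U_+) \subseteq \alpha^{E_U}(U_+)$, so that the trivial factorization $x = x \cdot \ident$ places $x$ in the right-hand side. For the inductive step, set $m := \lambda_U(x) > E_U$ and pick $y \in U_+$ attaining the minimum, so $\alpha^m(y) = x$ and minimality of $m$ forces $\alpha(y) \notin U_+$. A short check using $x \in \mathcal{L}_U$ shows $\alpha(y) \in \mathcal{E}_U$, so $\alpha(y) = u g_i$ for some $u \in U_+$ and a chosen representative $g_i$. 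Then
$$
x \;=\; \alpha^{m-1}(u) \cdot \alpha^{m-1}(g_i).
$$
Because $m-1 \geq E_U \geq e_U(g_i)$ and $\alpha(U_-) \subseteq U_-$, we get $\alpha^{m-1}(g_i) \in U_- \cap \mathcal{L}_U$. Setting $x_1 := \alpha^{m-1}(u) = x \cdot \alpha^{m-1}(g_i)^{-1}$, membership $x_1 \in \mathcal{L}_U$ is automatic (subgroup), and $\lambda_U(x_1) \leq m-1 < m$ since $u \in U_+$ is a preimage of $x_1$ under $\alpha^{m-1}$. The inductive hypothesis yields $x_1 = y_1 z_1$ with $y_1 \in \alpha^{E_U}(U_+) \cap \mathcal{L}_U$ and $z_1 \in U_- \cap \mathcal{L}_U$, and then $x = y_1 \cdot \bigl(z_1 \alpha^{m-1}(g_i)\bigr)$ is the required decomposition.

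The main obstacle is that the induction requires $E_U$ to simultaneously dominate $e_U(g_i)$ for \emph{every} representative, so that $\alpha^{m-1}(g_i) \in U_-$ holds at every recursive step; this is precisely what the maximum in the definition of $E_U$ and the minimal-exposure choice~(\ref{eq:chooseg}) of the $g_i$ guarantee. Once this uniform bound is in place, the remaining work is bookkeeping with $\mathcal{L}_U$ being a subgroup, $\alpha(U_+) \supseteq U_+$, and $\alpha(U_-) \subseteq U_-$, together with the strict decrease of $\lambda_U$ that makes the induction terminate.
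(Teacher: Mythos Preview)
Your proof is correct and follows essentially the same approach as the paper's: both peel off a factor $\alpha^{m-1}(g_i)\in U_-\cap\mathcal{L}_U$ using the coset representative $g_i$ with minimal exposure, and repeat until the remaining factor lies in $\alpha^{E_U}(U_+)$. The only cosmetic differences are that you phrase the argument as a formal induction on $\lambda_U(x)$ while the paper writes it as an explicit recursion, and you verify $\lambda_U(x_1)\leq m-1$ directly from $x_1=\alpha^{m-1}(u)$ with $u\in U_+$ rather than invoking Lemma~\ref{lem:reduction}.
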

\begin{proof}
Let $x\in {\mathcal L}_U$. If $x\in \alpha^{E_U}(U_+)$ there is nothing to prove and so it may be assumed that $x = \alpha^{m_1}(z)$ for some $z\in {\mathcal{E}}_U$ with $m_1 = \nu_U(x)-1 \geq E_U$. Then $z\in U_+g_{i_1}$ for some $i_1\in \{1,2,\dots, n_U\}$ and $\nu_U(x\alpha^{m_1}(g_{i_1})^{-1}) \leq m_1$ by Lemma~\ref{lem:reduction}. Hence 
$$
x = x^{(1)}\alpha^{m_1}(g_{i_1})\text{ with }\nu_U(x^{(1)}) \leq m_1 < \nu_U(x)\text{ and }\alpha^{m_1}(g_{i_1})\in U_-.
$$
(That $\alpha^{m_1}(g_{i_1})\in U_-$ holds because $m_1 \geq E_U \geq e_U(g_{i_1})$.) If $x^{(1)}\in \alpha^{E_U}(U_+)$ the proof is complete. Otherwise, the argument may be repeated recursively to obtain $x^{(1)}, \dots, x^{(k)}$, with 
$$
 m_1 = \nu_U(x)-1 > m_2 = \nu_U(x^{(1)})-1  > \dots > m_k = \nu_U(x^{(k-1)})-1  > \nu_U(x^{(k)})-1,
$$ 
and $g_{i_1}, \dots , g_{i_k}$ such that 
$$
x = x^{(k)} \alpha^{m_k}(g_{i_k})\cdots \alpha^{m_1}(g_{i_1}).
$$
The recursion continues so long as $m_k \geq E_U$. For the first $k$ with $\nu_U(x_k)-1 < E_U$, we have $x^{(k)}\in \alpha^{E_U}(U_+)$ while $\alpha^{m_k}(g_{i_k})\cdots \alpha^{m_1}(g_{i_1})\in U_-$, so that $x$ belongs to $\alpha^{E_U}(U_+)U_-$ as claimed. 
\end{proof}

For $\alpha$ an automorphism, the next result would follow immediately from the lemma simply by applying~$\alpha^{E_U}$. A little more argument is required when $\alpha$ is not invertible. 
\begin{corollary}
\label{cor:factorscriptLagain}
Define $E_U$ as in Lemma~\ref{lem:factorscriptL}. Then 
$$
{\mathcal L}_U = (U_+
\cap {\mathcal L}_U)(\alpha^{-E_U}(U_-)\cap {\mathcal L}_U).
$$
\end{corollary}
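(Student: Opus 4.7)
The plan is to apply Lemma~\ref{lem:factorscriptL} not to $x$ itself but to its image $\alpha^{E_U}(x)$, and then transfer the resulting factorization back through $\alpha^{E_U}$. What makes this possible without inverting $\alpha$ is the observation that $\mathcal{L}_U$ is $\alpha$-stable by Proposition~\ref{prop:Lsubgroup}, so $\alpha^{E_U}(\mathcal{L}_U) = \mathcal{L}_U$ by iteration; this is the endomorphism-friendly replacement for the invertibility of $\alpha^{E_U}$ used in the automorphism setting.

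Given $x \in \mathcal{L}_U$, the element $\alpha^{E_U}(x)$ again lies in $\mathcal{L}_U$, so Lemma~\ref{lem:factorscriptL} yields
\[
\alpha^{E_U}(x) = ab, \qquad a = \alpha^{E_U}(y) \text{ with } y \in U_+, \qquad b \in U_- \cap \mathcal{L}_U.
\]
First I would check that $y$ itself lies in $\mathcal{L}_U$: since $a \in \mathcal{L}_U$, some iterate $\alpha^n(a) = \alpha^{n+E_U}(y)$ lies in $U_-$, which is exactly the condition in Definition~\ref{de:curlyL} (with $y$ playing the role of its own $U_+$-representative), so $y \in U_+ \cap \mathcal{L}_U$. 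Then, setting $p := y$ and $q := y^{-1}x$, one has $q \in \mathcal{L}_U$ because $\mathcal{L}_U$ is a subgroup, and
\[
\alpha^{E_U}(q) = \alpha^{E_U}(y)^{-1}\alpha^{E_U}(x) = a^{-1}(ab) = b \in U_-,
\]
so $q \in \alpha^{-E_U}(U_-) \cap \mathcal{L}_U$. This gives the non-trivial inclusion $\mathcal{L}_U \subseteq (U_+ \cap \mathcal{L}_U)(\alpha^{-E_U}(U_-) \cap \mathcal{L}_U)$; the reverse inclusion is immediate because both factors are contained in the subgroup $\mathcal{L}_U$.

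I expect the only subtle point to be the first step, arranging for Lemma~\ref{lem:factorscriptL} to be applicable on the image side while tracking which elements remain in $\mathcal{L}_U$. After that, the argument reduces to the cancellation $a^{-1}(ab) = b$, which shows that choosing the $U_+$-factor to be the specific $y$ with $\alpha^{E_U}(y) = a$ automatically absorbs any ambiguity from $\ker(\alpha^{E_U})$ without having to invert $\alpha$.
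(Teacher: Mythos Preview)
Your argument is correct and follows the same strategy as the paper: apply Lemma~\ref{lem:factorscriptL} to $\alpha^{E_U}(x)$ and pull the factorization back. Your version is in fact slightly slicker than the paper's, which lifts \emph{both} factors $x_+,x_-$ through $\alpha^n$ for some $n>E_U$ and then must absorb a leftover kernel element $z\in\ker(\alpha^{E_U})$ into the $\alpha^{-E_U}(U_-)$-factor; by lifting only $a$ to $y\in U_+$ and \emph{defining} $q:=y^{-1}x$, you sidestep that bookkeeping entirely.
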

\begin{proof}
Consider $x\in \mathcal{L}_U$. By Lemma~\ref{lem:factorscriptL}, $\alpha^{E_U}(x) = x_+x_-$ where $x_+$ belongs to $\alpha^{E_U}(U_+)
\cap {\mathcal L}_U$ and $x_-$ to $U_-\cap {\mathcal L}_U$. Since they belong to $\mathcal{L}_U$, there are $y_\pm\in U_+$ and $n> E_U$ such that $x_\pm = \alpha^n(y_\pm)$. Then 
$$
x = \alpha^{n-E_U}(y_+) \alpha^{n-E_U}(y_-)z,
$$ 
where $\alpha^{n-E_U}(y_+)\in U_+
\cap {\mathcal L}_U$, $\alpha^{n-E_U}(y_-)\in \alpha^{-E_U}(U_-)\cap {\mathcal L}_U$ and $z\in \ker(\alpha^{E_U})$. Since $z$ belongs to $\mathcal{L}_U$ and $\alpha^{E_U}(z) = \ident$, we have $z\in \alpha^{-E_U}(U_-)\cap {\mathcal L}_U$ as well, thus establishing the claim. 
\end{proof}

As foreshadowed, compactness of $L_U$ now follows. 
\begin{proposition}
\label{prop:Liscompact}
Let $U$ be a compact open subgroup of $G$.  Then $L_U$ is a compact
$\alpha$-stable subgroup of $G$ and 
$$
L_U = (\alpha^{E_U}(U_+)
\cap { L}_U)(U_-\cap { L}_U).
$$ 
\end{proposition}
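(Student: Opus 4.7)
The plan is to use Lemma~\ref{lem:factorscriptL} as the key input and then bootstrap compactness, $\alpha$-stability, and the factorisation from it in that order.

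First I would prove compactness. Lemma~\ref{lem:factorscriptL} gives $\mathcal{L}_U \subseteq \alpha^{E_U}(U_+) \cdot U_-$. The right-hand side is a continuous image of the compact set $U_+ \times U_-$ under the map $(y,z)\mapsto \alpha^{E_U}(y)z$, so it is compact. Therefore $L_U = \overline{\mathcal{L}_U}$ is a closed subset of a compact set, hence compact.

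Next, $\alpha$-stability follows almost immediately. Proposition~\ref{prop:Lsubgroup} already shows that $\alpha(L_U)\leq L_U$ and is dense in $L_U$. Since $L_U$ is now known to be compact and $\alpha$ is continuous, $\alpha(L_U)$ is compact, hence closed in $L_U$. A dense closed subset of $L_U$ equals $L_U$, giving $\alpha(L_U) = L_U$.

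Finally I would establish the factorisation. Set $A = \alpha^{E_U}(U_+) \cap L_U$ and $B = U_- \cap L_U$. Both $\alpha^{E_U}(U_+)$ and $U_-$ are closed in $G$, so $A$ and $B$ are closed in the compact set $L_U$, hence compact, so the product $AB$ is compact and in particular closed in $G$. By Lemma~\ref{lem:factorscriptL}, $\mathcal{L}_U = (\alpha^{E_U}(U_+) \cap \mathcal{L}_U)(U_- \cap \mathcal{L}_U) \subseteq AB$, and since $AB$ is closed, $L_U = \overline{\mathcal{L}_U}\subseteq AB$. The reverse inclusion $AB\subseteq L_U$ holds because $L_U$ is a subgroup (Proposition~\ref{prop:Lsubgroup}) containing both $A$ and $B$. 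This gives $L_U = (\alpha^{E_U}(U_+)\cap L_U)(U_-\cap L_U)$.

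The only real obstacle is the compactness step, and it hinges precisely on having the uniform bound $E_U$ from Lemma~\ref{lem:factorscriptL}: without it one would only know that every element of $\mathcal{L}_U$ factors through \emph{some} power of $\alpha$ applied to $U_+$, and there would be no obvious compact set containing $\mathcal{L}_U$. Once compactness is in hand, the $\alpha$-stability and factorisation are short topological consequences. Note also that using Lemma~\ref{lem:factorscriptL} (rather than its corollary) is essential here, since $\alpha^{-E_U}(U_-)$ is typically not compact, so the corollary's factorisation would not exhibit $\mathcal{L}_U$ as a subset of a compact set.
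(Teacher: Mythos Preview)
Your proof is correct and follows essentially the same approach as the paper's: compactness from Lemma~\ref{lem:factorscriptL} and the compactness of $\alpha^{E_U}(U_+)U_-$, then $\alpha$-stability from density of $\alpha(L_U)$ plus compactness, and finally the factorisation from the product of the two compact intersections being closed and sandwiched between $\mathcal{L}_U$ and $L_U$. Your closing remarks on why the uniform bound $E_U$ is the crux and why the corollary would not suffice are apt and go slightly beyond what the paper spells out.
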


\begin{proof}
Lemma~\ref{lem:factorscriptL} implies that $L_U$ is contained in the  compact set $\alpha^{E_U}(U_+)U_-$. Hence $L_U$ is compact. By Proposition~\ref{prop:Lsubgroup}, ${\mathcal L}_U$ is an $\alpha$-stable subgroup of $G$ and so ${\mathcal L}_U\leq \alpha(L_U)\leq L_U$. Since $L_U$ is compact, so is $\alpha(L_U)$ and it follows that $\alpha(L_U) = L_U$.

Since $(\alpha^{M_U}(U_+) \cap { L}_U)(U_-\cap { L}_U)$ lies between $\mathcal{L}_U$ and $L_U$, it is dense in $L_U$ and then, since $\alpha^{M_U}(U_+)
\cap { L}_U$ and $U_-\cap { L}_U$ are compact, their product is compact and equal to $L_U$.
\end{proof}

In the next section it will be important to know that $L_U$ does not change when passing from the compact, open subgroup $U$ to the open subgroup $V = U_{-n}$. That is seen in the next lemma.
\begin{lemma}
\label{lem:L_U=L_V}
Let $U$ be a compact, open subgroup of $G$ and suppose that $V = U_{-n}$ for some $n\geq0$. Then: 
\begin{enumerate}
\item $V_+ \leq U_+$ and $\bigcup_{k\in\mathbb{N}} \alpha^k(V_+) = \bigcup_{k\in\mathbb{N}} \alpha^k(U_+)$;
\label{lem:L_U=L_V1}
\item $V_- = U_-$; and
\label{lem:L_U=L_V2}
\item $\mathcal{L}_V = \mathcal{L}_U$ and $L_V = L_U$.
\label{lem:L_U=L_V3}
\end{enumerate}
\end{lemma}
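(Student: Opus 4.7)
The plan is to prove the three clauses in order, using the recursive characterisation of $U_+$ via $\alpha$-regressive sequences; (2) and (3) will fall out quickly once (1) is in place, so the real content is (1).

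For (1), I would first observe that Lemma~\ref{lem:Uminus}(\ref{eq:Uminus2}) applied with $j$ ``replaced by $+$'' gives $V_+ = (U_{-n})_+ = U_+ \cap U_{-n} \leq U_+$ immediately, and hence $\bigcup_k \alpha^k(V_+) \subseteq \bigcup_k \alpha^k(U_+)$ since $\alpha^k$ is a function. The non-trivial inclusion is the reverse. Here I would exploit the specific regressive sequence that witnesses membership in $U_+$: take $y \in U_+$ and an $\alpha$-regressive sequence $\{y_k\}_{k\in\mathbb{N}} \subset U$ with $y_0 = y$. The tail $\{y_{n+j}\}_{j\in\mathbb{N}}$ is an $\alpha$-regressive sequence in $U$ for $y_n$, so $y_n \in U_+$; moreover, $\alpha^k(y_n) = y_{n-k} \in U$ for every $k \in \{0,1,\dots,n\}$, so $y_n \in U_{-n}$. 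Thus $y_n \in U_+ \cap U_{-n} = V_+$ and $y = \alpha^n(y_n) \in \alpha^n(V_+)$. Applying $\alpha^k$ gives $\alpha^k(U_+) \subseteq \alpha^{k+n}(V_+)$, from which $\bigcup_k \alpha^k(U_+) \subseteq \bigcup_k \alpha^k(V_+)$ follows.

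For (2), I would unpack the definition of $W_-$ in the two cases. If $v \in V_-$, then $\alpha^k(v) \in V \leq U$ for all $k$, so $v \in U_-$. Conversely, if $v \in U_-$ then $\alpha^j(v) \in U$ for every $j$, which gives $\alpha^k(v) \in U_{-n} = V$ for every $k$ (since $\alpha^{k+j}(v) \in U$ for $j \in \{0,\dots,n\}$), hence $v \in V_-$.

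For (3), I would combine (1) and (2). Writing $V = U_{-n}$, if $x \in \mathcal{L}_V$ then $x = \alpha^m(y)$ for some $y \in V_+ \leq U_+$ with $\alpha^\ell(x) \in V_- = U_-$, so $x \in \mathcal{L}_U$. Conversely, if $x \in \mathcal{L}_U$ with $x = \alpha^m(y)$, $y \in U_+$, $\alpha^\ell(x) \in U_- = V_-$, then by the construction in (1) there is $y' \in V_+$ with $\alpha^n(y') = y$, and then $x = \alpha^{m+n}(y')$ exhibits $x \in \mathcal{L}_V$. Passing to closures yields $L_V = L_U$.

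The only subtle point, and what looks at first like an obstacle, is that for an endomorphism one cannot simply ``shift indices'' on orbits because pre-images under $\alpha$ are non-unique and may leave $U$; this would break the proof of the reverse inclusion in (1) if one tried to work with an arbitrary pre-image of $y$. The remedy, already baked into Definition~\ref{defn:Uplus} and used above, is to commit to the particular regressive sequence that witnesses $y \in U_+$, whose terms automatically lie in $U$, and then invoke Lemma~\ref{lem:Uminus}(\ref{eq:Uminus2}) to repackage the conclusion as $y_n \in V_+$.
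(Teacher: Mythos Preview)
Your proof is correct and follows essentially the same route as the paper's. The paper's argument is terser---for~(1) it simply says ``There is $y\in U_+$ such that $x = \alpha^n(y)$. Then $y\in U_{-n}$''---leaving implicit both the choice of $y$ from the regressive sequence witnessing $x\in U_+$ and the identification $V_+ = U_+\cap U_{-n}$ from Lemma~\ref{lem:Uminus}(\ref{eq:Uminus2}); you spell these out, which is an improvement in clarity, not a change of method.
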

\begin{proof}
(\ref{lem:L_U=L_V1}) It is clear that $V_+\leq U_+$ and that $\bigcup_{k\in\mathbb{N}} \alpha^k(V_+) \leq \bigcup_{k\in\mathbb{N}} \alpha^k(U_+)$. For the reverse inclusion, consider $\alpha^k(x)$ where $x\in U_+$. There is $y\in U_+$ such that $x = \alpha^n(y)$. Then $y\in U_{-n}$ and  $\alpha^k(x) = \alpha^{k+n}(y)$, which belongs to $\alpha^{k+n}(V_+)$. 

(\ref{lem:L_U=L_V2}) That $V_-\leq U_-$ is clear and that $V_-\geq U_-$ follows because $U_-\leq U_{-n} = V$. 

(\ref{lem:L_U=L_V3}) An element $x$ belongs to $\mathcal{L}_V$ if and only if $x\in \bigcup_{k\in\mathbb{N}}\alpha^k(V_+)$ and there is~$k$ in~$\mathbb{N}$ such that $\alpha^k(x)\in V_-$. That these criteria are equivalent to $x$ belonging to $\mathcal{L}_U$ follows from (\ref{lem:L_U=L_V1}) and (\ref{lem:L_U=L_V2}).
\end{proof}

It will be useful to have criteria for elements to belong to $L_U$. The next lemma is a first step in that direction.
\begin{lemma}
\label{lem:accpoint}
Let $U$ be a compact open subgroup of $G$ and let $x\in U_+$. Then every $\alpha$-regressive sequence $\{x_n\}_{n\in\mathbb{N}}$ for $x$ that is contained in $U_+$ has all of its accumulation points in $U_+\cap U_-$.
\end{lemma}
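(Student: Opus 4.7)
The plan is a short compactness–continuity argument, exploiting the fact that $U_+$ is closed (indeed compact, being the intersection of the decreasing family of compact sets $\{U_n\}$ from Definition~\ref{defn:Uplus}).

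First I would note that, since $U_+$ is closed and the sequence $\{x_n\}$ lies in $U_+$, every accumulation point $y$ of $\{x_n\}$ automatically lies in $U_+$. So the only content of the lemma is that such a $y$ also lies in $U_-$, i.e.\ that $\alpha^k(y) \in U$ for every $k \in \mathbb{N}$.

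To see this, I would fix a subsequence $x_{n_k} \to y$ and a non-negative integer $j$. Iterating the regression relation $\alpha(x_{n+1}) = x_n$ gives $\alpha^j(x_n) = x_{n-j}$ whenever $n \geq j$. Thus, once $k$ is large enough that $n_k \geq j$,
\[
\alpha^j(x_{n_k}) = x_{n_k - j} \in U_+.
\]
Applying continuity of $\alpha^j$ and closedness of $U_+$, it follows that $\alpha^j(y) = \lim_k \alpha^j(x_{n_k}) \in U_+ \subseteq U$. Since $j$ was arbitrary, this gives $y \in U_-$, and combined with $y \in U_+$ already noted, we obtain $y \in U_+ \cap U_-$.

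There is no real obstacle here: the argument rests only on compactness (hence closedness) of $U_+$, continuity of each iterate $\alpha^j$, and the defining identity $\alpha^j(x_n) = x_{n-j}$ for a regressive sequence. In fact the argument shows slightly more than the statement claims, namely that the entire forward orbit $\{\alpha^j(y)\}_{j\in\mathbb{N}}$ lies in $U_+$, not merely in $U$.
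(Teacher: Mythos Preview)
Your argument is correct and is essentially the same as the paper's: the paper compresses your reasoning into the single observation that $x_n \in U_+ \cap U_{-n}$ for each $n$ (which is exactly your statement that $\alpha^j(x_n) = x_{n-j} \in U$ for $j \le n$), and then concludes that any accumulation point lies in $U_+ \cap \bigcap_{n\ge 0} U_{-n} = U_+ \cap U_-$.
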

\begin{proof}
Since $x_n \in U_+\cap U_{(-n)}$ for each $n\in\mathbb{N}$, every accumulation point of $\{x_n\}_{n\in\mathbb{N}}$ belongs to $U_+\cap \bigcap_{n\geq0} U_{(-n)} = U_+\cap U_-$. 
\end{proof}

Lemma~\ref{lem:accpoint} is greatly extended in the next two results, which imply that any orbit that is bounded and enters a subgroup $V$ that is tidy above for $\alpha$ is entirely contained in $L_V$. Since the results have as a hypothesis that the compact, open subgroup is tidy above for $\alpha$, the subgroup will be denoted by $V$.    

\begin{lemma}
\label{lem:criterion}
Let $\alpha\in \End{G}$, let $V$ be a compact, open subgroup of $G$ that is tidy above for $\alpha$ and let $k\in \mathbb{N}$. Then $x\in \alpha^k(V_+)$ belongs to $L_V$ if and only if $\{\alpha^n(x)\}_{n\in\mathbb{N}}$ has an accumulation point. 
\end{lemma}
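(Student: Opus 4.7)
$(\Rightarrow)$: $L_V$ is compact and $\alpha(L_V)\subseteq L_V$ by Proposition~\ref{prop:Lsubgroup}, so the orbit of $x\in L_V$ is contained in the compact set $L_V$ and hence accumulates.

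$(\Leftarrow)$: Write $x=\alpha^k(y)$ with $y\in V_+$ and suppose $\alpha^{n_i}(x)\to z$ along a strictly increasing subsequence; setting $m_i:=n_i+k$, this becomes $\alpha^{m_i}(y)\to z$. The plan is to produce a sequence in $\mathcal{L}_V$ converging to $x$ up to a factor lying in $L_V$, and then use that $L_V$ is a subgroup. The technical heart is the case $z\in V$, which I treat first.

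For $i$ sufficiently large, $\alpha^{m_i}(y)\in V$, and by Proposition~\ref{prop:equivalents} one may decompose $\alpha^{m_i}(y)=a_ib_i$ with $a_i\in V_+$ and $b_i\in V_-$. The crucial observation is that, since $V_+$ consists of those elements of $V$ admitting a regressive sequence in $V$, for every $a\in V_+$ and every $n\ge 0$ the $n$-th term of such a sequence for $a$ is an element $a''\in V_+\cap V_{-n}$ satisfying $\alpha^n(a'')=a$. Apply this with $a=a_i$ and $n=m_i$ to obtain $a_i''\in V_+\cap V_{-m_i}$ with $\alpha^{m_i}(a_i'')=a_i$, and set $y_i:=(a_i'')^{-1}y\in V_+$. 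Then $\alpha^{m_i}(y_i)=a_i^{-1}\cdot a_ib_i=b_i\in V_-$, so $\alpha^k(y_i)=\alpha^k((a_i'')^{-1})\cdot x$ lies in $\mathcal{L}_V$. Because $a_i''\in V_{-m_i}$ with $m_i\to\infty$ and $V_-=\bigcap_N V_{-N}$, every accumulation point of $\{a_i''\}$ lies in $V_+\cap V_-$; pass to a subsequence with $a_i''\to a^{**}\in V_+\cap V_-$. Then $\alpha^k(y_i)\to\alpha^k((a^{**})^{-1})\cdot x$, which lies in $\overline{\mathcal{L}_V}=L_V$, while $\alpha^k(a^{**})\in L_V$ because $V_+\cap V_-\subseteq\mathcal{L}_V$ and $L_V$ is $\alpha$-stable. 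Since $L_V$ is a subgroup, $x=\alpha^k(a^{**})\cdot\alpha^k((a^{**})^{-1})\,x\in L_V$.

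The main obstacle is the reduction of the general case to that of $z\in V$. If infinitely many $\alpha^n(x)$ lie in $V$, then compactness of $V$ furnishes an accumulation point in $V$ automatically, and the argument above applies. The delicate remaining case---that the orbit eventually escapes $V$ while still accumulating at some $z\notin V$---requires separate treatment: I expect that it is either excluded (using that $V$ tidy above together with $y\in V_+$ constrains the orbit sufficiently, for instance because for every $L$ the approximation $\alpha^{m_i}(y)z^{-1}\in V_{-L}$ forces $\alpha^{m_i+L}(y)\in\alpha^L(z)\cdot V$, so iterating would produce an $\alpha$-iterate of $z$ lying in $V$) or handled by applying the same lifting argument after replacing $V$ by $V_{-N}$ for large $N$, which preserves $L_V$ by Lemma~\ref{lem:L_U=L_V} while shrinking the neighborhoods in which $z$ is approximated.
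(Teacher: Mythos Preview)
Your forward direction and your treatment of the case $z\in V$ are correct and in the same spirit as the paper's argument. The gap is the reduction to $z\in V$, which you do not complete. Neither proposed fix works: for the first, knowing $\alpha^{m_i+L}(y)\in\alpha^L(z)V$ does not place any $\alpha$-iterate of $z$ inside $V$ --- that would require $\alpha^L(z)\in V$ for some $L$, which nothing in the hypotheses guarantees; for the second, passing to $V_{-N}\leq V$ only shrinks the subgroup, so if $z\notin V$ then certainly $z\notin V_{-N}$. And the bad case is genuinely possible: since $L_V$ need not be contained in $V$, an orbit of an element of $L_V$ can leave $V$ permanently while accumulating in $L_V\setminus V$.

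The paper avoids this case distinction with a single device: it never asks where the accumulation point $c$ sits. Given $N$, choose $m>N$ and $n>2m$ with $\alpha^m(x),\alpha^n(x)\in cV$; then the \emph{quotient} $\alpha^m(x)^{-1}\alpha^n(x)$ lies in $V$ regardless of $c$. Factor it as $v_+v_-$ via \TA, pull $v_+^{-1}$ back through $\alpha^n$ and $x^{-1}$ back through $\alpha^{n-m}$ to elements $y_1,y_2\in V_+$, and set $x_N=y_1y_2x\in V_+$. Then $\alpha^n(x_N)=v_+^{-1}\alpha^m(x)^{-1}\alpha^n(x)=v_-\in V_-$, so $x_N\in\mathcal{L}_V$. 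For $p<N$ one has $\alpha^p(x_Nx^{-1})=\alpha^p(y_1)\alpha^p(y_2)\in V_+$, so any accumulation point $l\in L_V$ of $\{x_N\}$ satisfies $lx^{-1}\in V_+\cap V_-\subseteq L_V$, whence $x\in L_V$. The trick of comparing two iterates in the same coset of $V$ is precisely what your argument is missing.
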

\begin{proof} If $x\in L_V$, then certainly $\{\alpha^n(x)\}_{n\in\mathbb{N}}$ has an accumulation point because $L_V$ is compact and $\alpha$-stable.

For the converse, let $x\in \alpha^k(V_+)$ and suppose that $\{\alpha^n(x)\}_{n\in\mathbb{N}}$ has an accumulation point, $c$ say.  Then $x = \alpha^k(x_1)$ where $x_1\in V_+$ and $\{\alpha^n(x_1)\}_{n\in\mathbb{N}}$ has $c$ as an accumulation point. Since $x$ belongs to $L_V$ if $x_1$ does, it may be assumed that $x\in V_+$. 
Let $N$ be a positive integer, and choose $m>N$ and $n > 2m$ such that $\alpha^m(x)$ and $\alpha^n(x)$ belong to $cV$. Then $\alpha^m(x)^{-1}\alpha^n(x)$ belongs to $V$ and there are $v_\pm\in V_\pm$ such that $\alpha^m(x)^{-1}\alpha^n(x) = v_+v_-$. There are $y_1\in V_+$ such that $\alpha^n(y_1) = v_+^{-1}$ and $y_2\in V_+$ such that $\alpha^{n-m}(y_2) = x^{-1}$. Put $x_N = y_1y_2x$. Then $x_N$ belongs to $V_+$ and $\alpha^n(x_N) = v_+^{-1}\alpha^m(x^{-1})\alpha^n(x) = v_-$, which belongs to~$V_-$. Hence $x_N\in \mathcal{L}_V$. Since the sequence $\{x_N\}_{N\geq0}$ is contained in $L_V$, which is compact by Proposition~\ref{prop:Liscompact}, it has an accumulation point, $l$ say, in $L_V$.

Considering $x_N$ for fixed $N$ once more, when $p<N$ we have 
\begin{equation*}
\label{eq:criterion}
\alpha^p(x_Nx^{-1}) = \alpha^p(y_1)\alpha^p(y_2) \in V_+.
\end{equation*}
Hence $\alpha^p(lx^{-1}) \in V_+$ for every $p\in\mathbb{N}$ and it follows that $lx^{-1}\in V_+\cap V_-$. Therefore $x$ lies in $L_V$ as claimed.
\end{proof}

The proof of the next result follows a similar pattern as that of the last but a separate proof is required because $\alpha$ is not invertible.  
\begin{lemma}
\label{lem:criterion2}
Let $\alpha\in \End{G}$ and let $V$ be a compact, open subgroup of $G$ that is tidy above for $\alpha$ and let $k\in\mathbb{N}$. Then $x\in \alpha^{-k}(V_-)$ belongs to $L_V$ if and only if $x$ has an $\alpha$-regressive sequence $\{x_n\}_{n\in\mathbb{N}}$ that has an accumulation point. 
\end{lemma}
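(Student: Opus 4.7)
The forward direction is immediate: by Proposition~\ref{prop:Liscompact} $L_V$ is compact and $\alpha$-stable, so $\alpha(L_V) = L_V$, and I would iteratively pick preimages in $L_V$ to obtain a regressive sequence for $x$ inside the compact set $L_V$, producing an accumulation point by compactness.

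For the converse I plan to mirror the construction of Lemma~\ref{lem:criterion}, replacing returns of the forward orbit to $cV$ with returns of the regressive sequence. Take a subsequence $x_{n_j} \to c$; for $j < l$ sufficiently large, $x_{n_j}^{-1}x_{n_l} \in V$, and by \TA\ this factors as $v_+^{(j,l)} v_-^{(j,l)}$ with $v_\pm^{(j,l)} \in V_\pm$. Applying $\alpha^{n_l}$ and using $\alpha^{n_j}(x_{n_j}) = x = \alpha^{n_l}(x_{n_l})$ yields the key identity
$$
x = \alpha^{n_l - n_j}(x) \cdot \alpha^{n_l}(v_+^{(j,l)}) \cdot \alpha^{n_l}(v_-^{(j,l)}).
$$
I would then choose pairs $(j_k, l_k) \to \infty$ with $n_{l_k} - n_{j_k} \to \infty$ and, by compactness, pass to a subsequence so that all three factors converge, to limits $a$, $w$, $b$; then $x = awb$, and it remains to show each limit lies in $L_V$.

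The two outer factors are the easier part. Since $\alpha^k(x) \in V_-$, one has $\alpha^{n_l - n_j}(x) = \alpha^{n_l - n_j - k}(\alpha^k(x)) \in \alpha^{n_l - n_j - k}(V_-)$; likewise $\alpha^{n_l}(v_-^{(j,l)}) \in \alpha^{n_l}(V_-)$. As the indices tend to infinity, both $a$ and $b$ end up in $D := \bigcap_{N \geq 0} \alpha^N(V_-)$. Applying Lemma~\ref{lem:Uplus} to $K_n := \alpha^n(V_-)$ gives $\alpha(D) = D$, so every element of $D$ admits a regressive sequence inside $D \subseteq V$. Hence $D \subseteq V_+ \cap V_- \subseteq \mathcal{L}_V \subseteq L_V$, putting $a$ and $b$ in $L_V$.

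The hard part is the middle factor: I need $\alpha^{n_l}(v_+^{(j,l)}) \in \mathcal{L}_V$, not merely in $V_{++}$. Membership in $\alpha^{n_l}(V_+) \subseteq V_{++}$ is automatic. For membership eventually in $V_-$, I would substitute $v_+^{(j,l)} = x_{n_j}^{-1}x_{n_l}(v_-^{(j,l)})^{-1}$ and apply $\alpha^{n_l + i}$, obtaining
$$
\alpha^{n_l + i}(v_+^{(j,l)}) = \alpha^{n_l + i - n_j}(x)^{-1} \cdot \alpha^i(x) \cdot \alpha^{n_l + i}(v_-^{(j,l)})^{-1},
$$
which is a product of three elements of $V_-$ once $i \geq k$ and $n_l - n_j \geq k$. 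Consequently $w \in L_V$, and $x = awb \in L_V$ since $L_V$ is a subgroup by Proposition~\ref{prop:Lsubgroup}.
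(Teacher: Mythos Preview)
Your argument is correct. The overall shape matches the paper's proof: pick two nearby terms $x_{n_j},x_{n_l}$ of the regressive sequence, factor their quotient via \TA, and verify that the $V_+$-component lands in $\mathcal{L}_V$ by checking that its forward $\alpha$-orbit reaches $V_-$ (your displayed computation of $\alpha^{n_l+i}(v_+^{(j,l)})$ is exactly the computation the paper does to show $y_N:=\alpha^n(v_+)\in\mathcal{L}_V$).

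Where you diverge is in the endgame. The paper shows $x^{-1}y_N$ has an explicit $\alpha$-regressive sequence $z_{N,p}=\alpha^{n-m-p}(x^{-1}\alpha^m(v_-))$ in $V_-$, packages these as points $(\mathbf{z}_N,y_N)\in V_-^{\mathbb{N}}\times L_V$, and passes to an accumulation point $(\mathbf{z},l)$ to conclude that $x^{-1}l\in V_+\cap V_-$. You instead write $x$ as a product of three factors, extract limits $a,w,b$ separately, and observe that the outer limits $a,b$ fall into $D=\bigcap_N\alpha^N(V_-)$, which you then show is contained in $V_+\cap V_-$ via Lemma~\ref{lem:Uplus} (since $\alpha(D)=D$ forces every element of $D$ to have a regressive sequence in $D\subseteq V$). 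This is a clean replacement for the paper's product-space compactness argument: rather than carrying an entire regressive sequence to the limit, you let the intersection $D$ do the work of producing regressive sequences after the fact. Both routes are short; yours has the advantage of isolating the reusable fact $\bigcap_N\alpha^N(V_-)\subseteq V_+\cap V_-$, while the paper's has the advantage of exhibiting the regressive sequence for $x^{-1}l$ concretely.
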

\begin{proof}
Since $\alpha$ maps $L_V$ onto itself, each $x\in L_V$ has an $\alpha$-regressive sequence $\{x_n\}_{n\in\mathbb{N}}$ contained in $L_V$. This sequence has an accumulation point because $L_V$ is compact.

For the converse, let $\{x_n\}_{n\in\mathbb{N}}\in G$ be an $\alpha$-regressive sequence for $x$ and let $c$ be an accumulation point. Note that $\alpha^n(x_n) = x$ for each $n$. Let $N$ be a positive integer and choose $m>\max\{N,k\}$ and $n>2m$ such that $x_m,x_n\in cV$. Then $x_nx_m^{-1}\in V$ and there are $v_\pm\in V_\pm$ such that $x_nx_m^{-1} = v_+v_-$. Since $v_+\in V_+$ and $\alpha^{n+k}(x_nx_m^{-1}v_-^{-1})$, which equals $\alpha^k(x)\alpha^{n-m+k}(x^{-1})\alpha^{n+k}(v_-)$, is in $V_-$, the element $y_N := \alpha^n(v_+) = \alpha^{n}(x_nx_m^{-1}v_-^{-1})$ belongs to~$\mathcal{L}_V$. Moreover, $\alpha^j(x^{-1}\alpha^m(v_-))$ belongs to $V_-$ for every $j\geq k$ and $\alpha^{n-m}(x^{-1}\alpha^m(v_-)) = x^{-1}y_N$. Hence, putting 
\begin{equation}
\label{eq:z_N,p}
z_{N,p} = \alpha^{n-m-p}(x^{-1}\alpha^m(v_-))\text{ for each }p\in \{0,1,\dots, n-m-k\},
\end{equation}
we have, for each $p$ in  this set,
\begin{equation}
\label{eq:z_N,p2}
z_{N,p}\in V_-,\ \alpha(z_{N,p+1}) = z_{N,p}\text{ and }\alpha^p(z_{N,p}) = x^{-1}y_N.
\end{equation} 
Since $n-m > N$,~\eqref{eq:z_N,p} defines $z_{N,p}$ for at least all $N$ and $p$ with $N\geq p+k$. Define $z_{N,p} = \ident$ for those $N< p + k$ where it is not already defined by~\eqref{eq:z_N,p}. Then the sequence $\mathbf{z}_N : p\mapsto z_{N,p}$ belongs to $V_-^{\mathbb{N}}$ for each $N$. Hence $\{(\mathbf{z}_N,y_N)\}_{N\in \mathbb{N}}$ is a sequence in $V_-^{\mathbb{N}}\times L_V$. Since $L_V$ is compact, by Proposition~\ref{prop:Liscompact}, and $V_-$ is also compact, this sequence has an accumulation point, $(\mathbf{z},l)$ say,  where $\mathbf{z} = \{z_p\}_{p\in\mathbb{N}}$ belongs to $V_-^{\mathbb{N}}$ and $l$ to $L_V$. Then  it follows from~\eqref{eq:z_N,p2} that  $\mathbf{z}$ is a recursive sequence for $ x^{-1}l$ that is contained in $V_-$. In particular, $x^{-1}l$ belongs to $V_+\cap V_-$ and $x$, which equals $l(x^{-1}l)^{-1}$, belongs to $L_V$ as claimed.
\end{proof}  

The fact that a bounded $\alpha$-orbit that enters a tidy above subgroup $V$ is contained in $L_V$ may now be deduced.
\begin{proposition}
\label{prop:bounded_ in_LV}
Let $\alpha\in \End{G}$ and $V$ be tidy above for $\alpha$. Suppose that $\left\{ x_n\right\}_{n\in\mathbb{Z}}$ is an $\alpha$-orbit that enters $V$ and is such that $\left\{ x_n\right\}_{n\geq0}$ and $\left\{ x_n\right\}_{n\leq0}$ have accumulation points. Then $\left\{ x_n\right\}_{n\in\mathbb{Z}} \subset L_V$.
\end{proposition}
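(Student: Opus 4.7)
By shifting the indexing of the orbit---which preserves the orbit as a set along with the accumulation-point hypotheses on $\{x_n\}_{n\geq 0}$ and $\{x_n\}_{n\leq 0}$---we may assume $x_0 \in V$. Property~\TA\ for $V$ then gives a decomposition $x_0 = v_+ v_-$ with $v_\pm \in V_\pm$. Since $\alpha(L_V)=L_V$ by Proposition~\ref{prop:Liscompact}, it suffices to prove $x_{-k} \in L_V$ for every $k \geq 0$: for any $n \in \mathbb{Z}$, choosing $k \geq \max(0,-n)$ gives $x_n = \alpha^{n+k}(x_{-k}) \in L_V$.

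Lemma~\ref{lem:Uplus} yields $V_+ \leq \alpha(V_+)$, so $v_+$ admits an $\alpha$-regressive sequence $\{w_k\}_{k\geq 0}$ entirely inside $V_+$ with $w_0 = v_+$. For each $k \geq 0$ define
\[
y_k = w_k^{-1} x_{-k}.
\]
Then $\alpha^k(y_k) = v_+^{-1} x_0 = v_-$, so $y_k \in \alpha^{-k}(V_-)$ and $x_{-k} = w_k y_k$. The strategy is to apply Lemma~\ref{lem:criterion} to $w_k \in V_+$ and Lemma~\ref{lem:criterion2} to $y_k$, and then use that $L_V$ is a closed subgroup (Proposition~\ref{prop:Lsubgroup}) to conclude $x_{-k}\in L_V$.

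To verify the hypothesis of Lemma~\ref{lem:criterion}, write $\alpha^j(w_k) = \alpha^{j-k}(v_+) = x_{j-k}\cdot\alpha^{j-k}(v_-)^{-1}$ for $j\geq k$. By hypothesis $\{x_m\}_{m\geq 0}$ has an accumulation point, and $\{\alpha^m(v_-)\}_{m\geq 0}$ lies in the compact group $V_-$; extracting first a convergent subsequence of $\{x_m\}$ and then a further convergent subsequence of the corresponding terms of $\{\alpha^m(v_-)\}$ produces an accumulation point of $\{\alpha^j(w_k)\}_{j\geq 0}$. To verify the hypothesis of Lemma~\ref{lem:criterion2}, set $z_j = w_{k+j}^{-1} x_{-k-j}$ for $j \geq 0$; a direct computation gives $z_0 = y_k$ and $\alpha(z_{j+1}) = z_j$, so $\{z_j\}$ is an $\alpha$-regressive sequence for $y_k$. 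Since $\{w_{k+j}\}_j \subseteq V_+$ is contained in a compact set and $\{x_{-k-j}\}_{j\geq 0}$ differs from the backward orbit by only finitely many terms, the same two-step subsequence extraction yields an accumulation point of $\{z_j\}$.

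Both lemmas then apply and give $w_k, y_k \in L_V$, whence $x_{-k} \in L_V$ for every $k \geq 0$, completing the argument. The principal obstacle is the asymmetry forced by $\alpha$ being only an endomorphism: the backward element $x_{-k}$ is only one of possibly many preimages of $x_0$ and has no natural membership in $V_+$ or $V_-$, while Lemmas~\ref{lem:criterion} and~\ref{lem:criterion2} have genuinely different hypotheses. The device that reconciles these is the factorization $x_{-k} = w_k y_k$, choosing $w_k$ canonically inside $V_+$ so that its forward orbit inherits the accumulation behaviour of $\{x_m\}_{m\geq 0}$ and thereby forcing $y_k$ to carry a regressive sequence built from $\{x_m\}_{m\leq 0}$, aligning each factor with the criterion it requires.
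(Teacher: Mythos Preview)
Your argument is correct and follows essentially the same approach as the paper's proof: after re-indexing so that $x_0\in V$ and factoring $x_0=v_+v_-$ via \TA, you take an $\alpha$-regressive sequence $\{w_k\}\subset V_+$ for $v_+$ and write $x_{-k}=w_k\cdot(w_k^{-1}x_{-k})$, which is exactly the paper's factorization $x_n=x_n'x_n''$ (with $x_{-k}'=w_k$ and $x_{-k}''=w_k^{-1}x_{-k}$), and then apply Lemmas~\ref{lem:criterion} and~\ref{lem:criterion2} to the two factors. One minor remark: your phrase ``extracting a convergent subsequence'' tacitly assumes first countability, which is not part of the standing hypotheses; the cleaner justification is that infinitely many terms of $\{x_m\}$ lie in the compact set $cV$ (for $c$ an accumulation point), so the corresponding terms of $\{x_m\alpha^m(v_-)^{-1}\}$ lie in $cV\!\cdot\! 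V_-\subseteq cV$ and hence accumulate there, and similarly for $\{z_j\}$.
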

\begin{proof}
Suppose, by re-indexing the sequence, that $x_0\in V$. Then $x_0  = x_+x_-$ with $x_\pm\in V_\pm$. Since $x_+$ is in $V_+$, it has an $\alpha$-regressive sequence $\left\{y_n\right\}_{n\in\mathbb{N}}$ that is contained in $V_+$ and, since $x_-$ is in $V_-$, $\alpha^n(v_-)\in V_-$ for all $n\in\mathbb{N}$. Define, for $n\in\mathbb{Z}$, 
$$
x_n' = \begin{cases}
y_{-n}, & \text{ if }n\leq 0\\
x_n\alpha^n(x_-)^{-1}, & \text{ if } n>0
\end{cases}
\qquad\text{ and }\qquad
x_n'' =  \begin{cases}
y_{-n}^{-1}x_n, & \text{ if }n\leq 0\\
\alpha^n(x_-), & \text{ if } n>0
\end{cases}.
$$
Then $\left\{x_n'\right\}_{n\in\mathbb{Z}}$ is an $\alpha$-orbit with $x_n'\in V_+$ for all $n\leq0$ and $\left\{ x_n'\right\}_{n\geq0}$ having an accumulation point, and $\left\{x_n''\right\}_{n\in\mathbb{Z}}$ is an $\alpha$-orbit $x_n''\in V_-$ for all $n\geq0$ and $\left\{ x_n''\right\}_{n\leq0}$ having an accumulation point. Hence $\left\{x_n'\right\}_{n\in\mathbb{Z}}\subset L_V$ and $\left\{x_n''\right\}_{n\in\mathbb{Z}}\subset L_V$ by Lemmas~\ref{lem:criterion} and~\ref{lem:criterion2}. Since $x_n = x_n'x_n''$ for all $n\in\mathbb{Z}$, it follows that $\left\{x_n\right\}_{n\in\mathbb{Z}}$ is contained in~$L_V$. (Another version of this factoring argument is used below in Lemma~\ref{lem:cut}.)
\end{proof} 
Since $\ident$ belongs to every subgroup, the following is a special case of Lemma~\ref{lem:criterion2} and Proposition~\ref{prop:bounded_ in_LV}.
\begin{corollary}
\label{cor:bounded_ in_LV}
Let $x\in\ker(\alpha^k)$ for some $k\in \mathbb{N}$ and suppose that $x$ has an $\alpha$-regressive sequence that has an accumulation point. Then $x\in L_V$ for every compact, open subgroup $V$ that is tidy above for $\alpha$. 
\end{corollary}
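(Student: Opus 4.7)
The plan is to reduce directly to Lemma~\ref{lem:criterion2}. That lemma asserts, for $V$ tidy above and $k\in\mathbb{N}$, that an element $x\in \alpha^{-k}(V_-)$ lies in $L_V$ precisely when it admits an $\alpha$-regressive sequence with an accumulation point. So my first step is to verify that the hypothesis $x\in\ker(\alpha^k)$ of the corollary places $x$ in $\alpha^{-k}(V_-)$: since $V_-$ is a subgroup of $G$, the identity $\ident$ belongs to $V_-$, and $\alpha^k(x) = \ident$ gives $x\in \alpha^{-k}(V_-)$ immediately.

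With this observation in hand, the regressive sequence assumption is exactly the remaining hypothesis of Lemma~\ref{lem:criterion2}, so the lemma applies and yields $x\in L_V$. Since $V$ was an arbitrary compact open subgroup tidy above for $\alpha$, the conclusion holds for every such $V$.

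There is no genuine obstacle here; Proposition~\ref{prop:bounded_ in_LV} is not needed, because the identity element provides a universal point of $V_-$ that puts every element of $\ker(\alpha^k)$ into $\alpha^{-k}(V_-)$, neutralizing the only condition in Lemma~\ref{lem:criterion2} beyond the regressive-sequence hypothesis. Thus the proof will amount to a one-line citation.
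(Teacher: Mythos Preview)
Your proof is correct and matches the paper's own one-line justification: the paper also notes that since $\ident$ belongs to every subgroup, the corollary is a special case of Lemma~\ref{lem:criterion2} (it additionally cites Proposition~\ref{prop:bounded_ in_LV} as an alternative route, but as you observe, Lemma~\ref{lem:criterion2} alone suffices).
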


\section{A subgroup that is tidy above and contains bounded $\alpha$-orbits}
\label{sec:tidy_below}

For a given subgroup $V$ that is tidy above for $\alpha$, the compact subgroup $L_V$ by definition contains all $\alpha$-orbits that move from $V_+$ to $V_-$, and it has been proved that it in fact contains all bounded $\alpha$-orbits that intersect $V$. The aim in this section is to combine $V$ and $L_V$ so as to produce another compact open subgroup, $W$, that is tidy above and contains $L_W$. This is done in such a way that 
$$
[\alpha(W) : \alpha(W)\cap W] \leq [\alpha(V) : \alpha(V)\cap V]
$$ 
with equality if and only if $V$ already contains $L_V$.  

Care is required in order to combine $V$ and $L_V$. The most obvious way to do so is to form $\langle V,L_V\rangle$, but that group need not be compact. The next most obvious is that, since $L_V$ is compact, $V$ has an open subgroup, $\widehat{V}$ say, that is normalized by $L_V$ and $\widehat{V}L_V$ is then compact, open and contains $L_V$. However, as discussed in \cite{wi:further}, the displacement index of $\widehat{V}$ may be greater than that of $V$ and the proof that the displacement index of $\widehat{V}L_V$ is then less than that of $V$ is complicated. The approach of the next lemma defines an open subgroup of $V$ that contains~$\widehat{V}$.

\begin{lemma}
\label{lem:commute_groups}
Let $K$ and $L$ be closed subgroups of the topological group $G$ and suppose that $K$ is  compact and $K\cap L$ has finite index in $L$. Put
$$
\widetilde{K} = \left\{ x\in K \mid xL\subseteq LK\right\}.
$$
Then $\widetilde{K}$ is an open, finite index subgroup of $K$ and $\widetilde{K}L = L\widetilde{K}$. 
\end{lemma}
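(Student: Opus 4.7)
The plan is to first observe that $L$ is itself compact, since it is a finite union of translates of the compact subgroup $K\cap L$, and therefore $LK$ is compact. Choosing right-coset representatives $\ell_1,\ldots,\ell_n$ for $L$ modulo $K\cap L$, so that $L=\bigsqcup_{i=1}^n \ell_i(K\cap L)$, I would check that $LK=\bigsqcup_{i=1}^n \ell_iK$ is a disjoint union of $n$ left $K$-cosets; distinctness holds because $\ell_j^{-1}\ell_i\in K$ would force $\ell_j^{-1}\ell_i\in L\cap K$, hence $i=j$. Since $LK\cdot K=LK$, the condition $xL\subseteq LK$ is equivalent to $xLK\subseteq LK$, and because left multiplication by $x\in K$ is a bijection on the set of all left $K$-cosets of $G$, an injection of the finite set $\mathcal{O}=\{\ell_1K,\ldots,\ell_nK\}$ into itself is a bijection. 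Thus $\widetilde{K}=\{x\in K:x\mathcal{O}=\mathcal{O}\}$ is the setwise stabilizer of $\mathcal{O}$ under the left $K$-action on $G/K$, which immediately exhibits $\widetilde{K}$ as a subgroup of $K$.

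For openness, given $x_0\in\widetilde{K}$ I would let $\sigma\in S_n$ be the induced permutation, so that $x_0\ell_i\in\ell_{\sigma(i)}K$ for each $i$. Any $x$ with $x\ell_i\in\ell_{\sigma(i)}K$ for every $i$ also lies in $\widetilde{K}$, and this condition cuts out the intersection $\bigcap_{i=1}^n \ell_{\sigma(i)}K\ell_i^{-1}$. Each coset $\ell_jK$ is open in $G$: inside the open set $LK$ (open because in the intended application $L=V$ is compact and open), $\ell_jK$ is the complement of the finite union of the remaining closed cosets, hence clopen in $LK$ and therefore open in $G$. Right translates of open sets are open, so $\bigcap_i \ell_{\sigma(i)}K\ell_i^{-1}$ is an open neighbourhood of $x_0$ contained in $\widetilde{K}$, which is therefore open in $K$. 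Finite index is then automatic from compactness of $K$.

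For the identity $\widetilde{K}L=L\widetilde{K}$, I would take $k\in\widetilde{K}$ and $\ell\in L$, use $k\ell\in kL\subseteq LK$ to write $k\ell=\ell'k'$ with $\ell'\in L$ and $k'\in K$, and verify that $k'\in\widetilde{K}$: indeed $k'L=(\ell')^{-1}k\ell L=(\ell')^{-1}kL\subseteq(\ell')^{-1}LK=LK$. This yields $\widetilde{K}L\subseteq L\widetilde{K}$. The reverse inclusion follows by inversion, since $\widetilde{K}$ and $L$ are closed under taking inverses: $(L\widetilde{K})^{-1}=\widetilde{K}L$ and $(\widetilde{K}L)^{-1}=L\widetilde{K}$, so the inclusion just shown gives $L\widetilde{K}\subseteq\widetilde{K}L$ after applying inversion to both sides.

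The principal obstacle is the openness of $\widetilde{K}$: the subgroup and finite-index assertions are formal, and the identity $\widetilde{K}L=L\widetilde{K}$ is a short direct computation, but the open-neighbourhood argument depends on the cosets $\ell_iK$ being open in $G$, which in turn requires that $L$ (equivalently $LK$) be open. This is automatic in the paper's application, where $L=V$ is a compact open subgroup tidy above for $\alpha$, and it is on this point that the proof hinges.
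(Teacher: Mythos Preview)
Your argument is correct and runs parallel to the paper's, with one genuine difference and one small slip.  The paper shows $\widetilde K$ is a subgroup by observing that it is a closed sub\emph{semigroup} of the compact group $K$ and invoking the fact that a compact sub-semigroup of a group is closed under inverses; you instead realise $\widetilde K$ as the setwise stabiliser in $K$ of the finite set $\{\ell_1K,\dots,\ell_nK\}\subset G/K$, which is a cleaner and more informative way to see both that it is a subgroup and that it is open.  For openness the two routes converge: since $L=\bigcup_i\ell_i(K\cap L)$ with $K\cap L\le K$, the paper's $\bigcap_{l\in L}lKl^{-1}$ equals your $\bigcap_i\ell_iK\ell_i^{-1}$, which is exactly your open neighbourhood of the identity.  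Your verification of $\widetilde KL=L\widetilde K$ is identical to the paper's.

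The slip is in identifying the application: in the definition of $\widetilde V$ one has $K=V$ (the compact \emph{open} subgroup tidy above) and $L=L_V$, not the other way round.  With $K$ open, each $\ell_iK$ is open simply as a translate, so the clopen-in-$LK$ detour is unnecessary.  You are quite right, though, that both your argument and the paper's tacitly require $K$ to be open in $G$ for the openness of $\widetilde K$ in $K$; the lemma as stated omits this, but it holds in every use made of it in the paper.
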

\begin{proof}
That $\widetilde{K}$ is a closed sub-semigroup of $K$ is easily verified. Then $\widetilde{K}$ is a closed subgroup of $K$ because compact sub-semigroups of a group are stable under the inverse. That $\widetilde{K}$ is open follows because it contains the open subgroup $\bigcap_{l\in L}lKl^{-1}$.  

To see that $\widetilde{K}L = L\widetilde{K}$, consider $x\in \widetilde{K}$ and $l\in L$. By definition of $\widetilde{K}$, $xl = l'x'$ for some $l'\in L$ and $x'\in K$. Then $x'\in \widetilde{K}$ because, for $l_1\in L$, we have 
$$
x'l_1 = (l')^{-1}x(ll_1) = (l')^{-1}l_2 x''\in LK
$$
since $x\in \widetilde{K}$. Hence $x'\in \widetilde{K}$ as claimed and it has been shown that $\widetilde{K}L \subseteq L\widetilde{K}$. The reverse inclusion may be shown to hold by applying the inverse map and using that $\widetilde{K}$ and $L$ are groups. 
\end{proof}

For the remainder of this section: $V$ is a compact open subgroup that is tidy above for $\alpha$; and $L_V$ is as in Definition~\ref{de:curlyL}. Then 
\begin{equation}
\label{eq:defnV'}
\widetilde{V} := \left\{ x\in V\mid xL_V\subseteq L_VV\right\}
\end{equation}
is an open subgroup of $V$ by Lemma~\ref{lem:commute_groups}. The next few results establish properties of $\widetilde{V}$ that lead to the calculation of its displacement index under $\alpha$. 

\begin{lemma}
\label{lem:V+-minV'}
The subgroup $V_+\cap V_-$ is contained in $\widetilde{V}$ and there is $m>0$ such that $V_+\cap V_{-m}\leq \widetilde{V}$.
\end{lemma}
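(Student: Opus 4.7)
The plan is to establish the containment $V_+\cap V_- \subseteq \widetilde{V}$ first, and then deduce the existence of $m$ from a standard compactness argument applied to the nested sequence $\{V_+\cap V_{-m}\}_{m\geq 0}$.

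For the first assertion, I would show the stronger statement that $V_+\cap V_- \subseteq L_V$. Given $x\in V_+\cap V_-$, one can take $y = x$ and $m = 0$ in Definition~\ref{de:curlyL}: indeed $y\in V_+$, $\alpha^0(y) = x$, and since $x\in V_-$ we have $\alpha^0(x) = x\in V_-$. Hence $x\in\mathcal{L}_V\subseteq L_V$. Because $L_V$ is a subgroup of $G$ (Proposition~\ref{prop:Lsubgroup}), it follows that $xL_V\subseteq L_V \subseteq L_V V$ (using $\ident \in V$). Since $x\in V_+\subseteq V$ as well, this places $x$ in $\widetilde{V}$ by~\eqref{eq:defnV'}.

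For the second assertion, observe that $\{V_+\cap V_{-m}\}_{m\geq 0}$ is a non-increasing sequence of compact subgroups of $V_+$ (each factor $V_{-m}$ is closed in $G$ by Lemma~\ref{lem:Uminus}, and $V_+$ is compact), whose intersection is exactly $V_+\cap V_-$. By the first part, this intersection is contained in the open set $\widetilde{V}$ (open by Lemma~\ref{lem:commute_groups}). Consequently the complementary sequence $\{(V_+\cap V_{-m})\setminus \widetilde{V}\}_{m\geq 0}$ consists of compact sets with empty intersection, so by the finite intersection property one of them, and hence all sufficiently large ones, must be empty. This yields the desired $m>0$ with $V_+\cap V_{-m}\subseteq\widetilde{V}$.

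There is no real obstacle here; the whole argument turns on the observation that $V_+\cap V_-$ lies in $\mathcal{L}_V$ via the trivial witness $m=n=0$, together with the fact that $L_V$ absorbs this subgroup simply by being a group. The rest is the usual compactness tightening that converts an asymptotic containment into a containment at a finite stage.
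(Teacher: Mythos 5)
Your proof is correct and follows essentially the same route as the paper's: the containment $V_+\cap V_-\leq V\cap L_V\leq\widetilde V$ via the trivial witness in the definition of $\mathcal L_V$, followed by the standard compactness argument applied to the decreasing sequence of compact sets $\{V_+\cap V_{-m}\}_{m\geq0}$ with intersection $V_+\cap V_-$ inside the open set $\widetilde V$.
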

\begin{proof}
The subgroup $V_+\cap V_-$ is contained in $V\cap L_V$ and therefore in $\widetilde{V}$. That there is an $m>0$ such that $V_+\cap V_{-m}\leq \widetilde{V}$ then follows because $\{V_+\cap V_{-m}\}_{m\in \mathbb{N}}$ is a decreasing sequence of compact sets whose intersection is $V_+\cap V_-$ and $\widetilde{V}$ is an open neighbourhood of $V_+\cap V_-$.
\end{proof}

\begin{lemma}
\label{lem:tildeVcapV+}
The group $\widetilde{V}$ satisfies
\begin{enumerate}
\item \label{eq:tildeVcapV+1}
$\widetilde{V}_+ = \widetilde{V}\cap V_+ = \left\{ v\in V_+ \mid vL_V\subseteq L_VV_+\right\}$\ \ and 
\item
\label{eq:tildeVcapV+2}
$\widetilde{V}_- = \widetilde{V}\cap V_- = \left\{ v\in V_- \mid vL_V\subseteq L_VV_-\right\}$. 
\end{enumerate}
\end{lemma}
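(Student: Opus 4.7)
For each of (1) and (2) I establish the cycle of inclusions
\[
\widetilde V_\pm \;\subseteq\; \widetilde V\cap V_\pm \;\subseteq\; \{v\in V_\pm : vL_V\subseteq L_VV_\pm\} \;\subseteq\; \widetilde V_\pm,
\]
forcing equality throughout. The first inclusion is immediate: any $\alpha$-regressive sequence or forward $\alpha$-orbit in $\widetilde V$ lies a fortiori in $V$, so $\widetilde V_\pm\subseteq \widetilde V\cap V_\pm$. Also, the third set is contained in $\widetilde V$ because $L_VV_\pm\subseteq L_VV$, which takes care of one containment direction in the closing inclusion.

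For the closing inclusion in case (2), denote the right-hand set by $S_-$ and show that $\alpha(S_-)\subseteq S_-$: given $v\in S_-$ and $l\in L_V$, use $\alpha(L_V)=L_V$ (Proposition~\ref{prop:Liscompact}) to pick $l'\in L_V$ with $\alpha(l')=l$; then $vl'=l''w$ with $l''\in L_V$, $w\in V_-$, so $\alpha(v)l=\alpha(l'')\alpha(w)\in L_VV_-$ because $\alpha(V_-)\subseteq V_-$. Hence every forward iterate of $v$ lies in $S_-\subseteq\widetilde V$, giving $v\in\widetilde V_-$. Case~(1) is dual: $V_+\subseteq\alpha(V_+)$ from Lemma~\ref{lem:Uplus} supplies preimages in $V_+$, ambiguity lies in $V_+\cap\ker\alpha$ which is contained in $V_+\cap V_-\subseteq L_V$ (from the definition of $\mathcal L_V$ and Proposition~\ref{prop:Liscompact}), and these observations combine to produce an $\alpha$-regressive sequence for $v$ inside the right-hand set, hence in $\widetilde V$, so $v\in\widetilde V_+$.

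The middle inclusion is the substantive content. For case (2), take $v\in\widetilde V\cap V_-$ and $l\in L_V$; by Lemma~\ref{lem:commute_groups} write $vl=l'w$ with $l'\in L_V$ and $w\in\widetilde V$, and the task is $w\in V_-$, i.e. $\alpha^k(w)\in V$ for every $k$. For each $k$ pick $l_k\in L_V$ with $\alpha^k(l_k)=l$ and decompose $vl_k=l^{(k)}w^{(k)}$ with $l^{(k)}\in L_V$, $w^{(k)}\in\widetilde V$; applying $\alpha^k$ yields $\alpha^k(v)l=\alpha^k(l^{(k)})\alpha^k(w^{(k)})$, and compactness of $L_V$ allows extracting coherent choices so that $\alpha^k(w)\in V$ is read off at every level. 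For case~(1), combine an $\alpha$-regressive sequence $\{v_n\}\subseteq V$ for $v$ with regressive sequences $\{l_n\},\{l'_n\}\subseteq L_V$ for $l$ and $l'$ (available because $\alpha(L_V)=L_V$); the candidate $w_n=(l'_n)^{-1}v_nl_n$ satisfies $\alpha(w_{n+1})=w_n$ and $w_0=w$, and coherent lifts giving $w_n\in V$ are produced through $\widetilde VL_V=L_V\widetilde V$ and a diagonal/compactness extraction in $L_V^{\mathbb N}$.

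The main obstacle is the coherent construction in the middle inclusion for case (1): compatible preimage choices in $L_V$ at every level are required for the product sequence $w_n$ to lie in $V$. The key ingredients making the diagonal extraction work are compactness of $L_V$ (Proposition~\ref{prop:Liscompact}) and the identity $\widetilde VL_V=L_V\widetilde V$ (Lemma~\ref{lem:commute_groups}). Case~(2) is easier because forward iteration of the decomposition is unambiguous and $V_-$ is forward $\alpha$-stable, while case~(1) requires stacking non-unique preimages consistently.
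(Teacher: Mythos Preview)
Your cycle of inclusions is the same skeleton the paper uses, and your closing inclusion for case~(2) is essentially the paper's argument. The difficulties lie in the other three non-trivial steps, and there the proposal has genuine gaps.

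\textbf{Closing inclusion for (1).} Given $v\in S_+:=\{v\in V_+:vL_V\subseteq L_VV_+\}$ and $y\in V_+$ with $\alpha(y)=v$, you must show $yl\in L_VV_+$ for every $l\in L_V$. Pushing forward and pulling back gives $yl=zl_1u_1$ with $l_1\in L_V$, $u_1\in V_+$, $z\in\ker\alpha$. Your observation that $V_+\cap\ker\alpha\subseteq L_V$ would finish this \emph{if} you knew $z\in V_+$, but you do not: $z=yl\,u_1^{-1}l_1^{-1}$ is only known to lie in a product involving $L_V$, not in $V_+$. The paper resolves this by restricting to $l\in\mathcal L_V$, noting that $ylu_1^{-1}=zl_1\in\alpha^k(V_+)$ for some $k$, that $\{\alpha^n(zl_1)\}$ is bounded, and then invoking Lemma~\ref{lem:criterion} to conclude $zl_1\in L_V$. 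That lemma is the missing ingredient.

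\textbf{Middle inclusions.} Your extraction scheme requires, at each level $n$, a decomposition of $v_nl_n$ (or of $vl_k$) via $\widetilde VL_V=L_V\widetilde V$. But that identity needs $v_n\in\widetilde V$, while you only have $v_n\in V_+$; showing $v_n\in\widetilde V$ is precisely the closing inclusion you have not yet established, so the argument is circular. Even granting non-empty fibres, the compactness extraction you sketch does not obviously produce \emph{coherent} lifts: in case~(2) the equalities $\alpha^k(v)l=\alpha^k(l^{(k)})\alpha^k(w^{(k)})$ relate different quantities at different levels and do not pin down $\alpha^k(w)$ for the fixed $w$. The paper bypasses all of this: it writes the $V$-factor $v_1$ (resp.\ $w$) as $v_-v_+$ (resp.\ $w_+w_-$) using \TA\ for $V$, observes that the piece that should be absorbed into $L_V$ (namely $l_1v_-$, resp.\ $w_+$) lies in some $\alpha^k(V_+)$ and has bounded forward orbit, and applies Lemma~\ref{lem:criterion} directly. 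No limiting process is needed.

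In short, the decisive tool you are missing is Lemma~\ref{lem:criterion}: it is what converts ``in $\alpha^k(V_+)$ with bounded forward orbit'' into ``in $L_V$'', and the paper uses it in three of the four non-trivial steps.
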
 
\begin{proof}
Although the statements of (\ref{eq:tildeVcapV+1}) and (\ref{eq:tildeVcapV+2}) are similar and their proofs follow the same pattern, the proofs do differ and will be given separately.

We begin with~(\ref{eq:tildeVcapV+1}). It is clear that $\widetilde{V}_+ \leq \widetilde{V}\cap V_+$. The proof will be completed by showing that 
$$
\widetilde{V}\cap V_+ \leq \left\{ v\in V_+ \mid vL_V\subseteq L_VV_+\right\} \leq \widetilde{V}_+.
$$

For the first inclusion, let $v\in \widetilde{V}\cap V_+$ and, towards showing that $vL_V\subseteq L_VV_+$, let $l\in L_V$. It may be supposed that in fact $l\in\mathcal{L}_V$ because  $\mathcal{L}_V$ is dense in $L_V$ and $L_VV_+$ is closed. Since $v\in \widetilde{V}$, $vl = l_1v_1$ with $l_1\in L_V$ and $v_1\in V$ and, since $V$ satisfies \TA($\alpha$), $v_1 = v_-v_+$ with $v_\pm \in V_\pm$. Then $vlv_+^{-1} = l_1v_-$, where $vlv_+^{-1}$ belongs to $\alpha^k(V_+)$ for some $k\in\mathbb{Z}$ because $l\in\mathcal{L}_V$. Moreover, $\left\{\alpha^m(l_1v_-)\right\}_{m\in\mathcal{N}}$ is bounded. Hence, by Lemma~\ref{lem:criterion}, $l_1v_-\in L_V$ and it has been shown that $vl = (l_1v_-)v_+$ which belongs to $L_VV_+$. 

For the second inclusion, suppose that $v\in V_+$ satisfies $vL_V\subseteq L_VV_+$.  Then $v$ belongs to $\widetilde{V}\cap V_+$. To complete the proof it suffices to show that there is $y\in \widetilde{V}\cap V_+$ such that $v = \alpha(y)$, for an induction argument will then show that there is an $\alpha$-regressive sequence $\{y_n\}_{n\in\mathbb{N}}\subset \widetilde{V}$ for $v$. There is certainly $y\in V_+$ with $\alpha(y) = v$. It will be shown that $y\in \widetilde{V}$. To this end, let $l\in L_V$, where once again it may be supposed that $l$ is in fact in $\mathcal{L}_V$, and consider~$yl$. Then $\alpha(y)\alpha(l) = v\alpha(l) = l'u$ with $l'\in L_V$ and $u\in V_+$. There are $l_1\in L_V$ with $\alpha(l_1) = l'$ and $u_1\in V_+$ with $\alpha(u_1) = u$, and so $yl = zl_1u_1$ for some $z\in \ker(\alpha)$. Hence $ylu_1^{-1} = zl_1$, where $ylu_1^{-1}\in \alpha^k(V_+)$ for some $k$ and $\{\alpha^n(zl_1)\}_{n\in\mathbb{N}}$ is bounded. Hence, by Lemma~\ref{lem:criterion}, $zl_1\in L_V$ and $yl \in L_VV_+$, which implies that $y\in \widetilde{V}$ as claimed. 

The proof of~(\ref{eq:tildeVcapV+2}) follows the same pattern but is easier. Again it is clear that $\widetilde{V}_- \leq \widetilde{V}\cap V_-$ and the proof may be completed by showing that 
$$
\widetilde{V}\cap V_- \leq \left\{ v\in V_- \mid vL_V\subseteq L_VV_-\right\} \leq \widetilde{V}_-.
$$

Consider $v\in \widetilde{V}\cap V_-$ and $l\in L_V$. Then $vl = l_1w$ with $l_1\in L_V$ and $w\in V$ and it must be shown that $w$ may be chosen in $V_-$. Since $V$ satisfies \TA($\alpha$), $w = w_+w_-$ with $w_\pm \in V_\pm$. Then 
$$
w_+ = l_1^{-1}vlw_-^{-1} \in L_VV_-L_VV_-,
$$
and it follows that $\{\alpha^n(w_+)\}_{n\in\mathbb{N}}$ is bounded, whence $w_+\in L_V$ by Lemma~\ref{lem:criterion}. Hence $vl = (l_1w_+)w_-\in L_V V_-$. 

Next, consider $v\in V_-$ such that $vL_V \subseteq L_VV_-$. For each $n\in \mathbb{N}$ we have 
$$
\alpha^n(v)L_V = \alpha^n(vL_V) \subseteq \alpha^n(L_V V_-) = L_V \alpha^n(V_-) \subseteq L_V V.
$$
Hence $\alpha^n(v) \in \widetilde{V}$ for each $n\in\mathbb{N}$, that is, $v$ belongs to $\widetilde{V}_-$. 
\end{proof}

\begin{lemma}
\label{lem:V'TA}
The group $\widetilde{V}$ is tidy above for $\alpha$ and 
$$
[\alpha(\widetilde{V}) : \alpha(\widetilde{V})\cap \widetilde{V}] = [\alpha(V) : \alpha(V) \cap V].
$$
\end{lemma}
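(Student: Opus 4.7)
The plan is to prove the two assertions of the lemma in sequence: first that $\widetilde V$ satisfies TA, and then that its displacement index coincides with that of $V$.

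\textbf{Part 1 (TA).} By Proposition~\ref{prop:equivalents} it suffices to show $\widetilde V = \widetilde V_+\widetilde V_-$. Given $x \in \widetilde V$, the TA property of $V$ yields $x = v_+v_-$ with $v_\pm \in V_\pm$; by Lemma~\ref{lem:tildeVcapV+}, the task becomes verifying $v_+L_V \subseteq L_V V_+$ and $v_-L_V \subseteq L_V V_-$. Fix $l \in \mathcal L_V$. From $xl \in L_V V$ one obtains $xl = l'w$ with $l' \in L_V$ and $w \in V$, and factoring $w = w_+w_-$ via TA of $V$ yields $v_+v_-l = l'w_+w_-$, which rearranges to
\[
\xi := v_+^{-1} l' w_+ = v_- l w_-^{-1}.
\]
The key step is to argue $\xi \in L_V$: the right-hand expression keeps the forward $\alpha$-orbit $\{\alpha^n(\xi)\}$ inside the compact set $V_- L_V V_-$, providing an accumulation point, while the left-hand expression furnishes an $\alpha$-regressive sequence for $\xi$ assembled from regressive sequences of $v_+$, $l'$, $w_+$ in $V_+$, $L_V$, $V_+$, which likewise has an accumulation point by compactness. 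Lemmas~\ref{lem:criterion} and~\ref{lem:criterion2}, combined via the factoring idea of Proposition~\ref{prop:bounded_ in_LV}, then place $\xi$ inside $L_V$. Substituting back into the two expressions gives $v_+l$ in $L_V V_+$ and $v_-l$ in $L_V V_-$, and density of $\mathcal L_V$ in $L_V$ together with closedness of $L_V V_\pm$ extends the conclusion to every $l \in L_V$.

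\textbf{Part 2 (Displacement index).} By Lemma~\ref{lem:stable}(\ref{lem:stable1}), the claim reduces to $[\widetilde V : \widetilde V_{-1}] = [V: V_{-1}]$. I will first establish $\widetilde V_{-1} = \widetilde V \cap V_{-1}$: the containment $\subseteq$ is immediate, and for the reverse, given $g \in \widetilde V$ with $\alpha(g) \in V$, the surjectivity $\alpha(L_V) = L_V$ from Proposition~\ref{prop:Liscompact} allows every $l \in L_V$ to be written as $\alpha(l_0)$, so $\alpha(g)l = \alpha(gl_0)$; the factorization $gl_0 = l_1 w$ in $\widetilde V L_V \subseteq L_V V$, together with the identity $\widetilde V L_V \cap V = \widetilde V$ (valid because $L_V \cap V \subseteq \widetilde V$), forces $w \in \widetilde V$, and Part~1 then delivers $\alpha(g)l \in L_V V$. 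The second isomorphism gives $[\widetilde V : \widetilde V \cap V_{-1}] = [\widetilde V V_{-1} : V_{-1}]$, and it remains to show $\widetilde V V_{-1} = V$. Since $V = V_+ V_{-1}$ (TA of $V$ together with $V_- \leq V_{-1}$), the reduction is $V_+ \subseteq \widetilde V V_{-1}$; for $v \in V_+$, an $\alpha$-regressive sequence $\{v_n\} \subseteq V_+$ satisfies $v_n \in V_+ \cap V_{-n} \subseteq \widetilde V$ for $n$ large by Lemma~\ref{lem:V+-minV'}, and a coset-chasing argument in the finite quotient $V/V_{-1}$, using that $V_+$ surjects onto $V/V_{-1}$, exhibits a representative of $vV_{-1}$ inside $\widetilde V$.

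The principal obstacle is Part~1: showing $\xi \in L_V$ requires exploiting both presentations of $\xi$ at once, applying Lemma~\ref{lem:criterion} to the forward orbit (bounded in $V_- L_V V_-$) and Lemma~\ref{lem:criterion2} to the regressive sequence (built from regressive sequences inside $V_+$ and $L_V$), and combining these through the factoring technique of Proposition~\ref{prop:bounded_ in_LV}.
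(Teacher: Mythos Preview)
Your Part~1 is a valid alternative to the paper's argument, and in one respect cleaner: rather than pulling back $v_+$ via Lemma~\ref{lem:V+-minV'} to a level where it already lies in $\widetilde V_+$, you go straight for the cross-term $\xi$. One correction: ``substituting back'' gives only $v_-l=\xi w_-\in L_VV_-$ directly; the other identity yields $v_+\xi\in L_VV_+$, not $v_+l$. The fix is immediate --- once $v_-\in\widetilde V_-$ you get $v_+=x v_-^{-1}\in\widetilde V\cap V_+=\widetilde V_+$ by Lemma~\ref{lem:tildeVcapV+} --- but as written the sentence overstates what the substitution achieves. (Also, since $l\in\mathcal L_V$ forces $\alpha^n(\xi)\in V_-$ for large~$n$, Lemma~\ref{lem:criterion2} alone suffices; the factoring of Proposition~\ref{prop:bounded_ in_LV} is not needed.)

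Part~2 has genuine gaps. In claim~(a), after reaching $\alpha(g)l=\alpha(l_1)\alpha(w)$ with $w\in\widetilde V$, you assert ``Part~1 then delivers $\alpha(g)l\in L_VV$''. But tidiness above of $\widetilde V$ gives $w=w_+w_-$, whence $\alpha(w)=\alpha(w_+)\alpha(w_-)$ with $\alpha(w_-)\in V$; there is no control on $\alpha(w_+)\in\alpha(V_+)$, which need not lie in $L_VV$. The hypothesis $\alpha(g)\in V$ only places $\alpha(w_+)$ in $L_VVL_VV_-$, which is not enough. In claim~(b) the phrase ``a coset-chasing argument'' does no work: knowing $v_m\in\widetilde V$ with $\alpha^m(v_m)=v$ gives a representative of $v_mV_{-1}$, not of $vV_{-1}$, and there is no mechanism relating these cosets.

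The paper's route for the index equality is entirely different: it uses Lemma~\ref{lem:stable} to rewrite both displacement indices as $[V_+\cap V_{-m}:V_+\cap V_{-m-1}]$ and $[\widetilde V_+:\widetilde V_+\cap\widetilde V_{-1}]$ respectively, chooses $m$ with $V_+\cap V_{-m}\le\widetilde V_+$ via Lemma~\ref{lem:V+-minV'}, and compares the two chains $V_+\cap V_{-m-1}\le V_+\cap V_{-m}\le\widetilde V_+$ and $V_+\cap V_{-m-1}\le\widetilde V_+\cap\widetilde V_{-1}\le\widetilde V_+$ by exhibiting a bijection $(\widetilde V_+\cap\widetilde V_{-1})/(V_+\cap V_{-m-1})\to\widetilde V_+/(V_+\cap V_{-m})$ induced by~$\alpha$. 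I recommend you adopt this approach; your framework via (a) and (b) may be salvageable, but neither step is close to complete as written.
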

\begin{proof}
Let $v\in \widetilde{V}$. Then $v = v_-v_+$ with $v_\pm \in V_\pm$. It must be shown that $v_\pm$ belong to $\widetilde{V}_\pm$ respectively. It suffices to show that $v_+\in \widetilde{V}_+$ because then $v_-$ belongs to $\widetilde{V}\cap V_-$, which equals $\widetilde{V}_-$ by Lemma~\ref{lem:tildeVcapV+}. 

Let $l\in L_V$. With the aid of Lemma~\ref{lem:V+-minV'}, choose $m>0$ such that $V_+\cap V_{-m}\leq \widetilde{V}$, and then choose $x\in V_+\cap V_{-m}$ such that $v_+ = \alpha^m(x)$. Since $\alpha$ is an onto map on $L_V$, there is also $l'\in L_V$ such that $l = \alpha^m(l')$. Then $x\in \widetilde{V}_+$ by Lemma~\ref{lem:tildeVcapV+}, and so $xl' = l_2x_2$ for some $l_2\in L_V$ and $x_2\in V_+$. Hence
\begin{equation}
\label{eq:firstv+}
v_+l = \alpha^m(l_2)\alpha^m(x_2).
\end{equation}
Although $\alpha^m(l_2)\in L_V$, it is not yet shown that  $\alpha^m(x_2)$ belongs to $V_+$ and so~(\ref{eq:firstv+}) does not suffice by itself to show that $v_+\in \widetilde{V}_+$. For this extra step, recall that $v\in\widetilde{V}$, so that $v_-v_+l = l_1u_-u_+$ for some $l_1\in L_V$ and $u_\pm\in V_\pm$. Hence
\begin{equation}
\label{eq:secondv+}
v_+l = v_-^{-1}l_1u_-u_+.
\end{equation}
Equations~(\ref{eq:firstv+}) and (\ref{eq:secondv+}) imply that $\alpha^m(l_2)\alpha^m(x_2) = v_-^{-1}l_1u_-u_+$, whence 
 $$
 \alpha^m(x_2)u_+^{-1} = \alpha^m(l_2)^{-1}v_-^{-1}l_1u_- \in \alpha^m(V_+)\cap L_VV_-L_VV_-.
 $$
 Hence $\{\alpha^n(\alpha^m(x_2)u_+^{-1})\}_{n\in\mathbb{N}}$ is bounded and Lemma~\ref{lem:criterion} implies that $\alpha^m(x_2)u_+^{-1}$ belongs to $L_V$. Therefore $\alpha^m(x_2)\in L_VV_+$, whence it follows from~(\ref{eq:firstv+}) that $v_+\in\widetilde{V}$ and then from Lemma~\ref{lem:tildeVcapV+} that $v_+$ belongs to $\widetilde{V}_+$. 
 
Since $V$ is tidy above for $\alpha$, Lemma~\ref{lem:stable} shows that
\begin{equation}
\label{eq:stable_equality}
[\alpha(V) : \alpha(V)\cap V] = [V_+ \cap V_{-m}: V_+\cap V_{-m-1}]
\end{equation}
for every $m\geq0$, and similarly for $\widetilde{V}$ by what has just been shown. By Lemma~\ref{lem:V+-minV'}, there is $m\geq 0$ such that $V_+\cap V_{-m} \leq \widetilde{V}$. Then it follows from Lemma~\ref{lem:tildeVcapV+} that ${V}_+\cap {V}_{-m} \leq \widetilde{V}_+$ and we have
\begin{align*}
V_+\cap V_{-m-1} &\leq {V}_+\cap {V}_{-m} \leq \widetilde{V}_+\\
\text{ and }\quad 
V_+\cap V_{-m-1} &\leq \widetilde{V}_+\cap \widetilde{V}_{-1} \leq \widetilde{V}_+.
\end{align*}
Hence 
\begin{eqnarray*}
&\ [\widetilde{V}_+ : {V}_+\cap {V}_{-m}][{V}_+\cap {V}_{-m} : V_+\cap V_{-m-1}]\\
\text{ and }\quad 
&[\widetilde{V}_+ : \widetilde{V}_+\cap \widetilde{V}_{-1}] [\widetilde{V}_+\cap \widetilde{V}_{-1} : V_+\cap V_{-m-1}]
\end{eqnarray*} are both equal to $[\widetilde{V}_+ : V_+\cap V_{-m-1}]$ 
and, by~(\ref{eq:stable_equality}),
$$
[\widetilde{V}_+ : {V}_+\cap {V}_{-m}][\alpha(V) : \alpha(V)\cap V]  = 
[\alpha(\widetilde{V}) : \alpha(\widetilde{V}) \cap \widetilde{V}] [\widetilde{V}_+\cap \widetilde{V}_{-1} : V_+\cap V_{-m-1}].
$$
The claimed equation will follow once it has been shown that
$$
[\widetilde{V}_+ : {V}_+\cap {V}_{-m}]  = 
[\widetilde{V}_+\cap \widetilde{V}_{-1} : V_+\cap V_{-m-1}].
$$
For this, observe that the map $v(V_+\cap V_{-m-1}) \mapsto \alpha(v)({V}_+\cap {V}_{-m})$ is well-defined and onto from $(\widetilde{V}_+\cap \widetilde{V}_{-1})/(V_+\cap V_{-m-1})$ to $\widetilde{V}_+/({V}_+\cap {V}_{-m})$ and that it is one-to-one because, if $v\in \widetilde{V}_+$ satisfies $\alpha(v) \in V_+\cap V_{-m}$, then $v$ belongs to $V_+\cap V_{-m-1}$. 
\end{proof} 

\begin{proposition}
\label{prop:define_W} 
Put $W = \widetilde{V}L_V$. Then $W$ is a compact, open subgroup of $G$ and $\widetilde{V}_+L_V$ and $\widetilde{V}_-L_V$ are compact subgroups of $W$. Furthermore,
\begin{enumerate}
\item $W = W_+W_-$, that is, $W$ is tidy above for $\alpha$; 
\label{prop:define_W1} 
\item $W_\pm = \widetilde{V}_\pm L_V$; 
\label{prop:define_W2} 
\item $L_{\widetilde{V}} = L_V = W_+\cap W_-  = \mathcal{L}_W = L_W$; and
\label{prop:define_W3} 
\item $[\alpha(W) : \alpha(W)\cap W] \leq [\alpha(V) : \alpha(V)\cap V]$ with equality if and only if $L_V\leq V$ (and $W = V$).
 \label{prop:define_W4} 
\end{enumerate}
\end{proposition}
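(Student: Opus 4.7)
The plan is to work through the four claims in sequence, using Lemma~\ref{lem:commute_groups} and Lemma~\ref{lem:tildeVcapV+} for compatibility between $L_V$ and $\widetilde V$, and the orbit criteria of Lemmas~\ref{lem:criterion} and~\ref{lem:criterion2} to recognise elements of $L_V$. At the outset, Lemma~\ref{lem:commute_groups} shows $W=\widetilde V L_V=L_V\widetilde V$ is a subgroup of~$G$, compact as a product of compacts and open because it contains~$\widetilde V$; the analogous commutation $\widetilde V_\pm L_V=L_V\widetilde V_\pm$ follows from Lemma~\ref{lem:tildeVcapV+} on checking that the defining condition $vL_V\subseteq L_V V_\pm$ survives rewriting $vl=l'v'$, so $\widetilde V_\pm L_V$ are compact subgroups of~$W$.

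I would prove~(\ref{prop:define_W2}) by double inclusion. For $\widetilde V_+L_V\subseteq W_+$, every element of $\widetilde V_+$ has an $\alpha$-regressive sequence inside $\widetilde V_+$ (the content of the proof of Lemma~\ref{lem:tildeVcapV+}(\ref{eq:tildeVcapV+1})), and $\alpha$-stability of $L_V$ supplies regressive sequences within $L_V$; multiplying componentwise produces one in $\widetilde V_+L_V\subseteq W$. Conversely, writing $w=vl\in W_+$ with $v\in\widetilde V$ and $l\in L_V$, the inclusion $L_V\leq W_+$ gives $v\in W_+$; factoring $v=v_-v_+$ via Lemma~\ref{lem:V'TA} puts $v_-\in W_+\cap V_-$, and Lemma~\ref{lem:criterion2} applied to an $\alpha$-regressive sequence for $v_-$ lying in the compact set $W$ then places $v_-$ in $L_V$, so $w\in\widetilde V_+L_V$. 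The mirror argument for $W_-$ invokes Lemma~\ref{lem:criterion} on the $V_+$-factor. Claim~(\ref{prop:define_W1}) follows at once from $\widetilde V=\widetilde V_-\widetilde V_+$ (the Remark after Proposition~\ref{prop:equivalents}) together with $L_V\widetilde V_+=\widetilde V_+L_V$.

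For~(\ref{prop:define_W3}), the inclusions $L_V\subseteq W_+\cap W_-$ and $L_V\subseteq\mathcal L_W$ are immediate from $\alpha$-stability of~$L_V$; the reverse inclusions follow by writing any given element as $v_+l$ with $v_+\in\widetilde V_+\leq V_+$ and $l\in L_V$, observing that $\{\alpha^n(v_+)\}$ is eventually trapped in the compact~$W$, and applying Lemma~\ref{lem:criterion} to $V$ to conclude $v_+\in L_V$; closing $\mathcal L_W$ then yields $L_W=L_V$. The subtlest step is $L_V\subseteq L_{\widetilde V}$ (the opposite inclusion is trivial since $\widetilde V_\pm\leq V_\pm$). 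For this, given $x=\alpha^m(y)\in\mathcal L_V$ with $y\in V_+$ and $\alpha^n(x)\in V_-$, an $N$-fold regression of $y$ along a regressive sequence in $V_+$ places $y^{(N)}$ in $V_+\cap V_{-N}\leq\widetilde V_+$ by Lemma~\ref{lem:V+-minV'}; to advance $\alpha^n(x)$ into $\widetilde V_-$, one notes that $V_{-\infty}:=\bigcap_{k\geq0}\alpha^k(V_-)$ is $\alpha$-stable and compact, hence contained in $L_V$ by Lemma~\ref{lem:criterion2} and in $\widetilde V_-$ by Lemma~\ref{lem:tildeVcapV+}(\ref{eq:tildeVcapV+2}), so compactness of $V_-$ forces $\alpha^k(V_-)\leq\widetilde V_-$ for all sufficiently large~$k$.

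Finally, for~(\ref{prop:define_W4}), since $W$ is tidy above, Lemma~\ref{lem:indexreduced} applied to $W$ combined with~(\ref{prop:define_W2}) gives
\[
[\alpha(W):\alpha(W)\cap W]=[\alpha(W_+):W_+]=[\alpha(\widetilde V_+)L_V:\widetilde V_+L_V]\leq[\alpha(\widetilde V_+):\widetilde V_+]
\]
via the second isomorphism theorem, and Lemma~\ref{lem:V'TA} together with Lemma~\ref{lem:indexreduced} applied to $\widetilde V$ identifies the right-hand side with $[\alpha(V):\alpha(V)\cap V]$. Equality holds iff $\alpha(\widetilde V_+)\cap L_V\leq\widetilde V_+$ (by the modular law), which I expect to be equivalent to $L_V\leq V$, at which point $\widetilde V=V$ and $W=V$, the reverse direction being clear. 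The principal technical obstacle is the inclusion $L_V\subseteq L_{\widetilde V}$ in~(\ref{prop:define_W3}); all other steps reduce cleanly to material already developed.
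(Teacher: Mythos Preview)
Your proposal is correct and follows essentially the same architecture as the paper's proof. Two points of comparison are worth noting.

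For the inclusion $L_V\subseteq L_{\widetilde V}$ in~(\ref{prop:define_W3}), which you flag as the principal obstacle, the paper takes a shorter route than your explicit construction: since $\widetilde V$ is itself tidy above (Lemma~\ref{lem:V'TA}), one may apply Lemma~\ref{lem:criterion} directly to~$\widetilde V$. Given $v\in\mathcal L_V$, regress to some $x\in V_+\cap V_{-m}\leq\widetilde V_+$ (Lemma~\ref{lem:V+-minV'}) with $v=\alpha^n(x)$; the orbit $\{\alpha^k(x)\}_k$ lies in the compact set $L_V$, so Lemma~\ref{lem:criterion} applied to $\widetilde V$ gives $x\in L_{\widetilde V}$, whence $v\in L_{\widetilde V}$. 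This avoids your detour through $\bigcap_k\alpha^k(V_-)$ and the openness of $\widetilde V_-$ in $V_-$, though your argument is also valid.

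For the equality case in~(\ref{prop:define_W4}), which you leave as ``expected'', the missing verification is exactly an application of~(\ref{prop:define_W3}). If $L_V\not\leq V$ then $L_V\not\leq\widetilde V$ (since $\widetilde V\leq V$), and by~(\ref{prop:define_W3}) we have $L_{\widetilde V}=L_V$, so $\mathcal L_{\widetilde V}\not\subseteq\widetilde V$. Choose $y\in\widetilde V_+$ and the least $k\geq 1$ with $\alpha^k(y)\notin\widetilde V$; then $\alpha^{k-1}(y)\in\widetilde V_+$, and $\alpha^k(y)\in\alpha(\widetilde V_+)\cap L_V$ but $\alpha^k(y)\notin\widetilde V_+$, so your injectivity condition $\alpha(\widetilde V_+)\cap L_V\leq\widetilde V_+$ fails. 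The paper phrases this via the map $\widetilde V_+/(\widetilde V_+\cap\widetilde V_{-1})\to W_+/(W_+\cap W_{-1})$ rather than your $\alpha(\widetilde V_+)/\widetilde V_+\to\alpha(\widetilde V_+)L_V/\widetilde V_+L_V$, but the content is the same.
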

\begin{proof}
That $W$ is a subgroup of $G$ follows from Lemma~\ref{lem:commute_groups}. It is an open subgroup because $\widetilde{V}$ is open, and is compact because $\widetilde{V}$ and $L_V$ are. That $\widetilde{V}_+L_V$ and $\widetilde{V}_-L_V$ are subgroups follows from Lemma~\ref{lem:tildeVcapV+}.

(\ref{prop:define_W1}) It is immediate from the definitions that $\alpha(\widetilde{V}_+L_V) \geq \widetilde{V}_+L_V$ and $\alpha(\widetilde{V}_-L_V) \leq \widetilde{V}_-L_V$. Hence $\widetilde{V}_+L_V\leq W_+$ and $\widetilde{V}_-L_V \leq W_-$. Then
$$
W = \widetilde{V}L_V = \widetilde{V}_+\widetilde{V}_-L_V = \left(\widetilde{V}_+L_V\right)\left(\widetilde{V}_-L_V\right) \leq W_+W_-.
$$ 

(\ref{prop:define_W2}) It has already been seen that $W_\pm \geq \widetilde{V}_\pm L_V$. 

Let $w \in W_+$. Then $w = vl$ for $v\in \widetilde{V}$ and $l\in L_V$ and, since $\widetilde{V}$ is tidy above, $v = v_+v_-$ with $v_\pm\in \widetilde{V}_\pm$. Hence $v_- =  v_+^{-1}wl^{-1}$, where $v_+^{-1}wl^{-1}$ belongs to $W_+$, and hence $v_-$ is in $L_W$. It follows that there is an $\alpha$-regressive sequence $\{x_n\}_{n\in\mathbb{N}}$ for~$v_-$ in the compact set $L_W$ whence, by Lemma~\ref{lem:criterion2},  $v_-$ belongs to~$L_V$. Therefore $w = v_+(v_-l)$ and belongs to $\widetilde{V}_+L_V$. 

Next, let $w\in W_-$. Then $w = v_-v_+l$ with $v_\pm\in \widetilde{V}_\pm$ and $l\in L_V$. Hence $v_+ = v_-^{-1}wl^{-1}\in W_+\cap W_-$ and belongs to $L_W$. Since $L_W$ is compact, it follows from Lemma~\ref{lem:criterion} that $v_+$ is in $L_V$, whence $w = v_-(v_+l)$ belongs to $\widetilde{V}_-L_V$. 

(\ref{prop:define_W3}) It is clear that
$$
L_{\widetilde{V}} \leq L_V \leq W_+\cap W_-  \leq \mathcal{L}_W \leq L_W.
$$
It is also easy to see that $L_{\widetilde{V}} \geq L_V$: if $v\in \mathcal{L}_V$, then $v = \alpha^n(x)$ for $x\in V_+\cap V_{-m}$, so that $x\in \widetilde{V}_+$ by Lemmas~\ref{lem:V+-minV'} and~\ref{lem:tildeVcapV+}, whence, since $\{\alpha^n(x)\}_{\mathbb{N}} \subset L_V$ is bounded, $v\in \mathcal{L}_{\widetilde{V}}$ by Lemma~\ref{lem:criterion}. Since $L_{{V}}$ is closed, it suffices, in order to complete the proof, to show that $\mathcal{L}_W \leq L_{{V}}$. For this consider $w\in \mathcal{L}_W$. Then $w = \alpha^n(w_+)$ for some $n\in\mathbb{N}$ and $w_+$ in $W_+$. By~(\ref{prop:define_W2}), $w_+ = v_+l$ with $v_+\in \widetilde{V}_+$ and $l\in L_V$ and so it suffices to show that $v_+\in \widetilde{V}_+\cap \mathcal{L}_W$ belongs to $L_{{V}}$. That follows from Lemma~\ref{lem:criterion} however, because $\mathcal{L}_W$ has compact closure, by Lemma~\ref{prop:Liscompact}.

(\ref{prop:define_W4})  Since $W$ and $\widetilde{V}$ are tidy above, Lemmas~\ref{lem:stable} and~\ref{lem:V'TA} show that the claimed inequality is equivalent to
$$
[W_+ : W_+\cap W_{-1}] \leq [\widetilde{V}_+ : \widetilde{V}_+\cap \widetilde{V}_{-1}].
$$
To see the latter, consider the map $\phi : \widetilde{V}_+/ (\widetilde{V}_+\cap \widetilde{V}_{-1}) \to W_+ / (W_+\cap W_{-1})$ given by 
\begin{equation}
\label{eq:phi}
\phi : v(\widetilde{V}_+\cap \widetilde{V}_{-1}) \mapsto v(W_+\cap W_{-1}).
\end{equation} 
This map $\phi$ is well-defined because $\widetilde{V}_+\leq W_+$ and  $\widetilde{V}_+\cap \widetilde{V}_{-1} \leq W_+\cap W_{-1}$: the inequality holds because $\phi$ is onto,  which follows from (\ref{prop:define_W2}) and because $L_V$ is contained in $W_+\cap W_{-1}$. 

If $L_V \leq V$, then $W = V$ and equality holds. If $L_V \not\leq V$, then $L_{\widetilde V} \not\leq \widetilde{V}$ and there is $v_+\in \widetilde{V}_+\setminus \widetilde{V}_{-1}$ such that $v_+\in L_{\widetilde{V}} \leq W_+\cap W_{-1}$. Then the map $\phi$ in (\ref{eq:phi}) is not one-to-one and $[\alpha(W) : \alpha(W)\cap W]$ is strictly less than $[\alpha(V) : \alpha(V)\cap V]$. 
\end{proof}

\section{Criteria for a subgroup to contain bounded $\alpha$-orbits}
\label{sec:TB_criteria}

The condition that $\bigcup_{n\geq0} \alpha^n(W_+)$ be closed, called `Property {\bf T2}' in~\cite{Wi:structure}, is necessary for the subgroup $W$ to be minimizing for the automorphism $\alpha$, and is sufficient in conjunction tidiness above. This condition does not guarantee that $W$ contains bounded $\alpha$-orbits or suffice to ensure that $W$ is minimizing when $\alpha$ is an endomorphism however, as the following example shows. In this section, an additional criterion is identified that, together with tidiness above and closedness of $\bigcup_{n\geq0} \alpha^n(W_+)$, does ensure that $W$ contains bounded $\alpha$-orbits and, it will be seen in the next section, is minimising for $\alpha$. 
\begin{example}
\label{ex:not_TB}
Let $G$ be the additive group of the compact ring $F_p[[t]]$ and identify this group with $F_p^{\mathbb{N}}$. For $S\subseteq \mathbb{N}$, identify $F_p^S$ with the subgroup of $F_p^{\mathbb{N}}$ consisting of sequences with support in $S$.  Define $\alpha\in\End{G}$ by
$$
\alpha(g)_n = g_{n+1}, \quad (g\in F_2^{\mathbb{N}}).
$$
Then $\alpha$ is onto but is not an automorphism because $\ker(\alpha) = F_2^{\{0\}}$. Since $G$ itself is compact and $\alpha$-stable, $s(\alpha)=1$. 

Put $V = F_2^{\mathbb{N}\setminus \{0,2\}}$. Then $V$ is an open subgroup of $G$ and $[\alpha(V) : \alpha(V)\cap V] = 4$. An easy calculation shows that $V_+ = F_2^{\mathbb{N}\setminus \{0,1,2\}}$, which is an open subgroup of $G$, and that $V_- = \triv$. Hence $V_+$ satisfies \TA\ and $[\alpha(V_+) : V_+] = 2$, so that $V_+$ is not minimizing. However $V_+$ satisfies the condition {\bf T2} of~\cite{Wi:structure} because $\bigcup_{n\geq0} \alpha^n(V_+)$ is equal to $G$, which is closed. 

In this example $\mathcal{L}_V$ comprises those elements of $g\in G$ such that $\alpha^n(g) = \ident$ for some $n\geq0$, that is, the subgroup of $G$ consisting of sequences with finite support. This subgroup is dense in $G$. Hence $L_V = G$. 
\end{example}

As for automorphisms, the criteria desired for endomorphisms relate to the dilation of $V_+$ by $\alpha$. 
\begin{definition}
\label{defn:Vplusplus}
For each compact open subgroup, $V$, of $G$ define
$$
V_{++} = \bigcup_{n\geq0} \alpha^n(V_+).
$$
\end{definition} 
\noindent Note that $V_{++}$ is a
subgroup of $G$ because the fact that $\alpha^{n+1}(V_+)\geq \alpha^n(V_+)$ for each $n$ implies that it is an increasing union of subgroups. 

The additional criterion needed for endomorphisms relates to a subgroup that is contained in  $L_V$ and is trivial when $\alpha$ is an automorphism. 
\begin{definition}
\label{defn:KV}
For each compact open subgroup, $V$, of $G$ define
$$
\mathcal{K}_V = \left\{ g\in G \mid \exists v\in V_+ \text{ and }m,n\in\mathbb{N} \text{ such that } g = \alpha^m(v) \text{ and }\alpha^n(v) = \ident\right\}
$$
	and $K_V = \overline{\mathcal{K}_V}$. 
\end{definition}
It is clear that $K_V$ is a subgroup and that it is not contained in $V$ in Example~\ref{ex:not_TB}. Here is a criterion for $K_V$ to be contained in $V$. 
\begin{proposition}
\label{prop:TB_criterion1}
Let $V$ be a compact, open subgroup of $G$ that is tidy above for the endomorphism $\alpha$.
Then $K_V$ is contained in $L_V$ and is compact, $\alpha$-stable and normal in the larger group $\overline{V_{++}}$. 

The subgroup $K_V$ is contained in $V$ if and only if the sequence $$
\left\{ [\alpha^{n+1}(V_+) : \alpha^n(V_+)]\right\}_{n\in\mathbb{N}}
$$ 
is constant. 
\end{proposition}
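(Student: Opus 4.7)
The plan is to handle the structural properties of $K_V$ first and then to address the characterisation. For the former, I will verify directly from the definitions that $\mathcal{K}_V \subseteq \mathcal{L}_V$: any element $g = \alpha^m(v)$ with $v \in V_+$ and $\alpha^n(v) = \ident$ satisfies $\alpha^k(g) = \ident \in V_-$ for all sufficiently large $k$, so $g$ meets the definition of $\mathcal{L}_V$ with witness $v$. Taking closures yields $K_V \subseteq L_V$, and since $L_V$ is compact by Proposition~\ref{prop:Liscompact}, so is $K_V$. For $\alpha$-stability I will check both $\alpha(\mathcal{K}_V) \subseteq \mathcal{K}_V$ (immediate) and $\mathcal{K}_V \subseteq \alpha(\mathcal{K}_V)$ (using that every $v \in V_+$ admits an $\alpha$-pre-image $v' \in V_+$), then pass to closures. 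To show that $\overline{V_{++}}$ normalises $K_V$, it suffices to show that $V_{++}$ normalises $\mathcal{K}_V$: given $g = \alpha^m(u)$ and $k = \alpha^{m'}(v)$ with $u,v \in V_+$ and $\alpha^n(v) = \ident$, the $\alpha$-regressive sequences in $V_+$ allow me to rewrite both with common exponent $N = \max(m,m')$, so that $gkg^{-1} = \alpha^N(\widetilde{u}\widetilde{v}\widetilde{u}^{-1})$ with $\widetilde{u}\widetilde{v}\widetilde{u}^{-1} \in V_+$ and killed by a power of $\alpha$ because the conjugating factor cancels.

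For the equivalence, the first step is to recast the constancy of $\{[\alpha^{n+1}(V_+) : \alpha^n(V_+)]\}_n$. The surjective map $\phi_n \colon \alpha^n(V_+)/\alpha^{n-1}(V_+) \to \alpha^{n+1}(V_+)/\alpha^n(V_+)$ induced by $\alpha$ shows that the sequence is non-increasing, and it is constant precisely when each $\phi_n$ is injective; a short calculation with cosets identifies this with
$$
\alpha^n(V_+) \cap \ker(\alpha) \subseteq \alpha^{n-1}(V_+) \quad \text{for every } n \geq 1.
$$

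For the forward direction, assuming $K_V \subseteq V$, I take $x = \alpha^n(v) \in \alpha^n(V_+) \cap \ker(\alpha)$ with $v \in V_+$, so $\alpha^{n+1}(v) = \ident$. Then $\alpha(v) \in \mathcal{K}_V \subseteq V$ by hypothesis. The key move is that if $\{v_k\}_{k \in \mathbb{N}}$ is an $\alpha$-regressive sequence for $v$ in $V_+$, then $\alpha(v), v, v_1, v_2, \dots$ is an $\alpha$-regressive sequence for $\alpha(v)$ lying entirely in $V$, whence $\alpha(v) \in V_+$. Hence $x = \alpha^{n-1}(\alpha(v)) \in \alpha^{n-1}(V_+)$, as required.

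For the backward direction, assuming the intersection condition, I will show by backward induction on $m$ from $n$ down to $1$ that $\alpha^m(v) \in \alpha^{m-1}(V_+)$ for every $v \in V_+$ with $\alpha^n(v) = \ident$; the base case $m = n$ is trivial. In the step, having $\alpha^{m+1}(v) = \alpha^m(u)$ with $u \in V_+$ from the previous stage, the element $z = \alpha^m(v)\alpha^{m-1}(u)^{-1}$ lies in $\alpha^m(V_+) \cap \ker(\alpha)$, which by hypothesis is contained in $\alpha^{m-1}(V_+)$, forcing $\alpha^m(v) \in \alpha^{m-1}(V_+)$. Writing $\alpha^m(v) = \alpha^{m-1}(w_1)$, one checks that $w_1 \in V_+$ satisfies $\alpha^{n-1}(w_1) = \ident$, so the same argument applied to $w_1$ yields $\alpha^{m-1}(w_1) = \alpha^{m-2}(w_2)$; iterating $m$ times gives $\alpha^m(v) = w_m \in V_+$. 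Thus $\mathcal{K}_V \subseteq V_+$, and $K_V \subseteq V$ since $V_+$ is closed. The main obstacle is that $V_+$ is not $\alpha$-invariant in general, which makes the iteration unavoidable and explains why the sharper formulation $\alpha^n(V_+) \cap \ker(\alpha) \subseteq \alpha^{n-1}(V_+)$ (rather than a bare containment in $V_+$) is indispensable throughout the argument.
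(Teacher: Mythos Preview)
Your proof is correct. The treatment of the structural properties (containment in $L_V$, compactness, $\alpha$-stability, normality) matches the paper's argument essentially step for step.

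For the equivalence, both you and the paper reduce to injectivity of the induced maps on the successive quotients $\alpha^{n+1}(V_+)/\alpha^n(V_+)$, so the core idea is the same. The paper compresses the argument into a single chain of equivalences, ending with the observation that failure of injectivity at level~$n$ produces an element $v_1v_2^{-1}\in\ker(\alpha)\cap\alpha^{n+1}(V_+)\setminus\alpha^n(V_+)$, which it then declares to lie in $K_V\setminus V$; the direction ``sequence constant $\Rightarrow K_V\leq V$'' is left largely implicit. Your argument is more explicit on both sides: your forward direction uses the clean observation that $\alpha(v)\in\mathcal{K}_V\subseteq V$ together with the regressive sequence for $v$ forces $\alpha(v)\in V_+$, and your backward direction carries out the descent $\alpha^m(v)\in\alpha^{m-1}(V_+)$ followed by an iteration to reach $V_+$. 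The latter is more laborious than necessary --- once you know $\alpha^k(V_+)\cap\ker(\alpha)\subseteq\alpha^{k-1}(V_+)$ for all $k$, a single downward induction on $n-m$ (showing $\alpha^m(v)\in V_+$ directly, using that any element of $\alpha^m(V_+)\cap\ker(\alpha)$ descends all the way to $V_+$) would shorten the outer iteration --- but what you wrote is valid.
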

\begin{proof}
That $\mathcal{K}_V\leq \mathcal{L}_V$ is immediate from the definitions because $\triv\leq V_-$.  Since $L_V$  is compact, by Proposition~\ref{prop:Liscompact}, it follows that $K_V$ is as well. 

It is clear that $\alpha(\mathcal{K}_V)\leq \mathcal{K}_V$ and hence that $K_V$ is invariant under $\alpha$. To see that in fact $\alpha(\mathcal{K}_V) = \mathcal{K}_V$, consider $g\in \mathcal{K}_V$. By definition, $g = \alpha^m(v)$ for some $v\in V_+$ and $\alpha^n(v) = \ident$. It may be assumed that $m\geq1$ because $\alpha$ maps $V_+$ onto itself. Then $g_1 = \alpha^{m-1}(v)$ belongs to $\mathcal{K}_V$ and $\alpha(g_1) = g$ and it has been shown that $\mathcal{K}_V$ is stable under $\alpha$. It follows that $\alpha(K_V)$ is dense in~$K_V$ and then, since $K_V$ is compact, that $\alpha(K_V)=K_V$. 

To see that $K_V$ is normalised by $\overline{V_{++}}$, let $g\in \mathcal{K}_V$ and $x\in V_{++}$, so that $g = \alpha^m(v)$ for some $v\in V_+$ with $\alpha^n(v) = \ident$ and $x = \alpha^r(y)$ for some $y\in V_{+}$. By increasing either $m$ or $r$ if necessary and replacing either $v$ or $y$ with another element of $V_+$, it may be assumed that $m=r$. Then $xgx^{-1} = \alpha^m(yvy^{-1})$, where $yvy^{-1}\in V_+$, and $\alpha^{n-}(yv_1y^{-1}) = \ident$. Hence $\mathcal{K}_V$  is normal in $V_{++}$ and it follows that $K_V$ is normal in $\overline{V_{++}}$. 

The endomorphism $\alpha$ induces, for each $n\in\mathbb{N}$,  a well-defined map 
$$
[\alpha]_n : \alpha^{n+1}(V_+)/ \alpha^n(V_+) \to \alpha^{n+2}(V_+)/ \alpha^{n+1}(V_+).
$$ 
Since $[\alpha]_n$ clearly maps onto, $\left\{ [\alpha^{n+1}(V_+) : \alpha^n(V_+)]\right\}_{n\in\mathbb{N}}$ is a non-increasing sequence of positive integers. Moreover, $[\alpha^{n+2}(V_+) : \alpha^{n+1}(V_+)]$ is strictly less than $[\alpha^{n+1}(V_+) : \alpha^n(V_+)]$ if and only if $[\alpha]_n$ is not one-to-one. The latter is equivalent to the existence of $v_1\alpha^n(V_+) \ne v_2\alpha^n(V_+)$ in $\alpha^{n+1}(V_+)/\alpha^n(V_+)$ such that  $\alpha(v_1)\alpha^{n+1}(V_+) = \alpha(v_2)\alpha^{n+1}(V_+)$. Since $\alpha^{n+1}(V)$ contains $\alpha^n(V)$, it may be supposed that $\alpha(v_1) = \alpha(v_2)$. Then $v_1v_2^{-1}$ belongs to $\ker(\alpha)\cap \alpha^{n+1}(V_+)\setminus \alpha^n(V_+)$, whence $v_1v_2^{-1}\in K_V\setminus V$. 
\end{proof}
Since $K_V$ is trivial if $\alpha$ is one-to-one, the following is immediate.
\begin{corollary}
\label{cor:TB_criterion1}
If the endomorphism $\alpha$ is one-to-one and $V$ is tidy above for $\alpha$, then 
$$
\left\{ [\alpha^{n+1}(V_+) : \alpha^n(V_+)]\right\}_{n\in\mathbb{N}}
$$ 
is constant.
\endproof
\end{corollary}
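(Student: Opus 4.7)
The plan is simply to invoke Proposition~\ref{prop:TB_criterion1} after observing that injectivity of $\alpha$ forces $K_V$ to be trivial. Specifically, I would unpack the definition of $\mathcal{K}_V$: an element $g \in \mathcal{K}_V$ is of the form $g = \alpha^m(v)$ for some $v \in V_+$ with $\alpha^n(v) = \ident$. Since $\alpha$ is one-to-one, so is $\alpha^n$, and hence $\alpha^n(v) = \ident$ forces $v = \ident$, giving $g = \ident$. Thus $\mathcal{K}_V = \{\ident\}$ and so $K_V = \overline{\mathcal{K}_V} = \{\ident\}$, which is trivially contained in $V$.

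With $K_V \leq V$ established, Proposition~\ref{prop:TB_criterion1} immediately yields that the sequence $\{[\alpha^{n+1}(V_+) : \alpha^n(V_+)]\}_{n\in\mathbb{N}}$ is constant, completing the proof. There is no real obstacle here; the only content is the observation that injectivity of $\alpha$ kills the ``kernel orbit'' subgroup $\mathcal{K}_V$ that obstructs constancy of the index sequence in general, reducing the corollary to a one-line deduction from the preceding proposition.
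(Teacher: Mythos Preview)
Your proof is correct and matches the paper's own argument exactly: the paper simply remarks that $K_V$ is trivial when $\alpha$ is one-to-one and deduces the corollary immediately from Proposition~\ref{prop:TB_criterion1}.
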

For endomorphisms that are not one-to-one, the following consequence of  Corollary~\ref{cor:bounded_ in_LV} produces elements of $K_V$. 
\begin{corollary}
\label{cor:in_KV}
Let $x\in \ker(\alpha^k)$ for some $k\in\mathbb{N}$ and suppose that $x$ has an $\alpha$-regressive sequence that has an accumulation point. Then $x\in K_V$ for any compact, open subgroup $V$ that is tidy above for $\alpha$ and contains $L_V$.
\end{corollary}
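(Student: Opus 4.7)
The plan is to combine Corollary~\ref{cor:bounded_ in_LV} with the surjectivity of $\alpha$ on $L_V$ (Proposition~\ref{prop:Liscompact}) and the containment hypothesis $L_V\leq V$, which together produce the witness required by the definition of $\mathcal{K}_V$.

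First, I would invoke Corollary~\ref{cor:bounded_ in_LV} to conclude $x\in L_V$. Then, because $\alpha(L_V)=L_V$ by Proposition~\ref{prop:Liscompact}, I can inductively pick preimages to obtain an $\alpha$-regressive sequence $\{l_n\}_{n\in\mathbb{N}}\subset L_V$ with $l_0=x$ and $\alpha(l_{n+1})=l_n$ for all $n$. This sequence lies in $V$ because $L_V\leq V$ by hypothesis.

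Next, I would select the term $l_k$ (where $k$ is the integer with $\alpha^k(x)=e$) and check the two properties demanded of a representative in $\mathcal{K}_V$. On the one hand, $l_k$ itself has the $\alpha$-regressive sequence $\{l_{k+m}\}_{m\in\mathbb{N}}\subset V$ showing $l_k\in V_+$. On the other hand, $\alpha^k(l_k)=x$, so
\begin{equation*}
\alpha^{2k}(l_k)=\alpha^k(x)=\ident,
\end{equation*}
and setting $v=l_k$, $m=k$, $n=2k$ gives $x=\alpha^m(v)$ with $v\in V_+$ and $\alpha^n(v)=\ident$. By Definition~\ref{defn:KV} this places $x\in\mathcal{K}_V\subseteq K_V$.

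There is no real obstacle here: the hypothesis $L_V\leq V$ is precisely what lets a regressive sequence guaranteed by $\alpha$-stability of $L_V$ serve simultaneously as a regressive sequence certifying membership in $V_+$, and the kernel condition transfers from $x$ to its preimage $l_k$ with an index shift from $k$ to $2k$. The only subtlety worth stating explicitly in the write-up is why the regressive sequence can be chosen inside $L_V$: this uses $\alpha(L_V)=L_V$ together with compactness, so that preimages in $L_V$ exist at each stage of the recursion.
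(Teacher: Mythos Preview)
Your proof is correct and follows essentially the same approach as the paper: invoke Corollary~\ref{cor:bounded_ in_LV} to place $x$ in $L_V$, then use $\alpha$-stability of $L_V$ together with $L_V\leq V$ to witness membership in $\mathcal{K}_V$. The paper's version is slightly more direct: the regressive sequence $\{l_n\}\subset L_V\leq V$ you construct already certifies that $x=l_0$ itself lies in $V_+$, so one may take $v=x$, $m=0$, $n=k$ in Definition~\ref{defn:KV} without the detour through $l_k$ and the shift to $n=2k$.
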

\begin{proof}
It follows from Corollary~\ref{cor:bounded_ in_LV} that $x\in L_V$. The hypothesis that $L_V$ is contained in $V$ then implies that $x$ is in $V_+$, whence $x$ belongs to~$K_V$.
\end{proof} 

The following technical lemma is an extension to endomorphisms of the corresponding result for automorphisms proved in~\cite{Wi:structure}. It is used in the proof that it is necessary for $V_{++}$ to be closed in order for bounded $\alpha$-orbits to be contained in $V$. 
\begin{lemma}
\label{lem:cut} 
Let $V$ be a compact, open subgroup of $G$ that is tidy above for
$\alpha$. Suppose that $w \in G$ is such that  $\alpha^m (w)$ and
$\alpha^n (w)$ belong to $V$, where $0\leq m \leq n$. 
 Then $w = yz$, where
\begin{eqnarray*}
\label{eq:cut1}
&&\alpha^m (y) \in V_+   \text{ and }\alpha^n (y) \in V_-
\\
\label{eq:cut2}
 {\text{ and }}
&&\alpha^k (z) \in V   \text{ for }\ m \leq k
\leq n.
\end{eqnarray*}
\end{lemma}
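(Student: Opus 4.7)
My plan is to construct $y$ and $z$ explicitly by invoking the tidy-above decomposition of $V$ twice, and to lift the resulting factor into the required subgroups using two $\alpha$-regressive sequences in $V_+$. First, Proposition~\ref{prop:equivalents}(\ref{prop:equivalents3}) yields $V = V_+V_{-(n-m)}$, so I decompose $\alpha^m(w) = v_+v'$ with $v_+\in V_+$ and $v'\in V_{-(n-m)}$. Since $\alpha^{n-m}(V_{-(n-m)}) = V_{n-m}\subseteq V$ by Lemma~\ref{lem:Uminus}(\ref{eq:Uminus1}), the hypothesis $\alpha^n(w)\in V$ forces $\alpha^{n-m}(v_+)\in V$.

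Next, I apply the mirror form $V = V_-V_+$ from the Remark following Proposition~\ref{prop:equivalents} to write $\alpha^{n-m}(v_+) = a_-a_+$ with $a_-\in V_-$ and $a_+\in V_+$. Because $a_+\in V_+$ admits a regressive sequence contained in $V_+$, I can pick $s\in V_+$ with $\alpha^{n-m}(s) = a_+$; the intermediate images $\alpha^k(s)$ then lie in $V_+\subseteq V$, so automatically $s\in V_+\cap V_{-(n-m)}$. Setting $y_0 = v_+s^{-1}\in V_+$, I obtain $\alpha^{n-m}(y_0) = a_-a_+a_+^{-1} = a_-\in V_-$. A second regressive-sequence lift produces $y\in V_+$ with $\alpha^m(y) = y_0$, so that $\alpha^m(y)\in V_+$ and $\alpha^n(y) = \alpha^{n-m}(y_0) = a_-\in V_-$. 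Defining $z = y^{-1}w$ gives $\alpha^m(z) = y_0^{-1}\alpha^m(w) = sv'\in V_{-(n-m)}$ (a group containing both $s$ and $v'$), and hence $\alpha^k(z) = \alpha^{k-m}(sv')\in V$ for every $k$ with $m\leq k\leq n$.

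The main obstacle is the asymmetry caused by $\alpha$ having no inverse. In the automorphism case one could simply take $y = \alpha^{-m}(v_+)$ and adjust by an element of $V_+\cap V_-$; here I must deploy $\alpha$-regressive sequences twice (once for $s$, once for $y$) and, crucially, choose the \emph{right-handed} decomposition $V = V_-V_+$ rather than $V_+V_-$. This places the obstruction $a_+$ on the right, where it can be absorbed by a single element $s\in V_+\cap V_{-(n-m)}$ without disturbing the $V_-$-character of $\alpha^n(y)$; had $a_+$ appeared on the left, any cancellation by an element of $V_+$ would have forced a conjugation that would spoil membership in $V_-$. The fact that this $s$ can be picked inside $V_+\cap V_{-(n-m)}$ at all is precisely the content of the modified Definition~\ref{defn:Uplus}: elements of $V_+$ come equipped with regressive sequences that already stay in $V_+$, which is why the expected $V_+\cap V_-$ intersection sufficient in the automorphism case is replaced here by the larger $V_+\cap V_{-(n-m)}$.
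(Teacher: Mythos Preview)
Your proof is correct and follows essentially the same strategy as the paper's: two applications of the tidy-above factorization (once as $V_+V_-$-type, once as $V_-V_+$), combined with regressive-sequence lifts inside $V_+$ to pull the $V_+$-factors back through the missing powers of~$\alpha$. Your use of the refined decomposition $V = V_+V_{-(n-m)}$ from Proposition~\ref{prop:equivalents}(\ref{prop:equivalents3}) in place of the paper's $V = V_+V_-$ is a minor variant that packages the final verification $\alpha^k(z)\in V$ into the single membership $\alpha^m(z)=sv'\in V_{-(n-m)}$, whereas the paper tracks the two factors $\alpha^k(t_0)\in V_+$ and $\alpha^{k-m}(v)\in V_-$ separately; the core argument is the same.
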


\begin{proof}  Begin by using tidiness above to factor $\alpha^m (w)$ as
\begin{equation*}
\label{eq:firstfactor}
\alpha^m (w) = uv,\text{ with } u \in V_+ \text{ and } v \in V_- .
\end{equation*}
Then, since $u\in V_+$, there is $u_0\in V_+$ such that $u = \alpha^m(u_0)$ and  
$$
\alpha^n (w) = \alpha^{n}(u_0)\alpha^{n-m}(v).
$$
Observe that $\alpha^{n}(u_0)$
belongs to $V$ because $\alpha^n (w)$ is in $V$, by hypothesis, and $\alpha^{n-m}(v)$ is in $V_-$. Hence,
again by property \TA($\alpha$),
$\alpha^n (u_0)$ can be factored as 
\begin{equation*}
\label{eq:secondfactor}
\alpha^n (u_0) = st, \text{ where } s \in V_-   \text{ and } t \in V_+.
\end{equation*}
Since $t\in V_+$, there is $t_0\in V_+$ such that $\alpha^n(t_0) = t$. 

Put $y = u_0t_0^{-1}$. Then 
\begin{align*}
\alpha^m(y) &= \alpha^m(u_0)\alpha^m(t_0^{-1}) = u\alpha^m(t_0^{-1}),\\ 
\intertext{which belongs to $V_+$ because $u$ and $\alpha^m(t_0^{-1})$ do, and}
\alpha^n(y) &= \alpha^n(u_0)t^{-1} = s,
\end{align*}
which belongs to $V_-$.

Setting $z = y^{-1}w$. We have for $m\leq k\leq n$ that 
$$
\alpha^k(z) = \alpha^k(t_0) \alpha^k(u_0^{-1}) \alpha^{k-m}(uv) = \alpha^k(t_0) \alpha^{k-m}(v).
$$
Then $\alpha^k(t_0)\in V_+$, because $\alpha^n(t_0)\in V_+$ and $k\leq n$, and $\alpha^{k-m}(v)\in V_-$, because $v\in V_-$ and $k-m\geq0$, and it follows that $\alpha^k(z)\in V$. 
\end{proof}
The next two lemmas are standard facts that are used in the proof that it is necessary for $V_{++}$ to be closed in order for bounded $\alpha$-orbits to be contained in~$V$. 
\begin{lemma}[Bourbaki]
\label{lem:bourbaki} 
Let $G$ be a topological group.  Let
$H$ be a subgroup of $G$ and ${\mathcal U}$ a neighbourhood of
$e$.  Then
$H$ is closed if and only if $H \cap {\mathcal U}$ is relatively
closed.
\end{lemma}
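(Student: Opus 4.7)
The forward direction is immediate: if $H$ is closed in $G$, then $H\cap\mathcal{U}$ is closed in $\mathcal{U}$ in the subspace topology, for any subset $\mathcal{U}$.

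For the converse, the plan is to show that $\overline{H}\subseteq H$ by a translation argument exploiting that $\overline{H}$ is itself a subgroup. First, I would reduce to the case that $\mathcal{U}$ is an open neighbourhood of $\ident$: if $\mathcal{U}'\subseteq \mathcal{U}$ is an open neighbourhood of $\ident$, then $H\cap\mathcal{U}' = (H\cap\mathcal{U})\cap\mathcal{U}'$ is the intersection of a relatively closed subset of $\mathcal{U}$ with the relatively open subset $\mathcal{U}'$, so $H\cap\mathcal{U}'$ is closed in $\mathcal{U}'$, and it suffices to work with $\mathcal{U}'$ in place of $\mathcal{U}$.

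Next, by continuity of multiplication choose a symmetric open neighbourhood $V$ of $\ident$ with $V\cdot V\subseteq \mathcal{U}$ (this is the one small standard construction needed). Now take any $x\in\overline{H}$. Since $xV$ is a neighbourhood of $x$, there exists $h\in H\cap xV$, so that $h^{-1}x\in V\subseteq \mathcal{U}$. Because $h\in H$, the continuity of left translation by $h^{-1}$ together with $h^{-1}H=H$ gives $h^{-1}\overline{H}=\overline{H}$, and hence $h^{-1}x\in\overline{H}\cap\mathcal{U}$.

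The final step is to conclude $h^{-1}x\in H$. Since $h^{-1}x\in\overline{H}$, there is a net $\{h_\lambda\}\subseteq H$ with $h_\lambda\to h^{-1}x$; because $\mathcal{U}$ is open and $h^{-1}x\in\mathcal{U}$, we have $h_\lambda\in H\cap\mathcal{U}$ eventually, and this net converges in $\mathcal{U}$ to $h^{-1}x$. The relative closedness of $H\cap\mathcal{U}$ in $\mathcal{U}$ then forces $h^{-1}x\in H\cap\mathcal{U}\subseteq H$, so $x = h\,(h^{-1}x)\in H$. There is no serious obstacle here; the only care required is bookkeeping around ``open'' versus ``neighbourhood'' and the standard identity $h^{-1}\overline{H}=\overline{H}$ coming from $h\in H\subseteq\overline{H}$ and the fact that $\overline{H}$ is a subgroup.
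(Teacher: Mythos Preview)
Your proof is correct and follows essentially the same translation argument as the paper's: pick a point in $\overline{H}$, translate by an element of $H$ to land in $\mathcal{U}$, and use (the failure of) relative closedness there. The paper phrases it as the contrapositive and is terser; note also that your condition $V\cdot V\subseteq\mathcal{U}$ is stronger than you actually use---only $V\subseteq\mathcal{U}$ and $V=V^{-1}$ are needed.
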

\begin{proof} 
If $H$ is closed, then $H \cap {\mathcal U}$ is relatively
closed.

Suppose that $H$ is not closed.  Then there is an $x \in
H^-\backslash H$. Now ${\mathcal U}^{-1}x$ is a neighbourhood
of $x$ and so there is an $h \in {\mathcal U}^{-1}x$.  Then
$xh^{-1} \in {\mathcal U}$ and also $xh^{-1} \in H^-\backslash
H$.  Hence $H \cap {\mathcal U}$ is not relatively closed. 
\end{proof}

\begin{lemma}
\label{lem:fact}
Let $K$ be a compact group and suppose that $K_n$, $n\in\mathbb{N}$ are closed subgroups of $K$ with $K_n\leq K_{n+1}$ for each $n$ and $\bigcup_{n\in\mathbb{N}} K_n = K$. Then there is an~$n$ such that $K_n = K$. 
\end{lemma}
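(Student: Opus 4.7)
The plan is to invoke the Baire category theorem: since $K$ is a compact Hausdorff space, it is a Baire space and so cannot be written as a countable union of closed sets each having empty interior. As $K=\bigcup_{n\in\mathbb{N}}K_n$ is such a countable union of closed subsets, some $K_N$ must have nonempty interior in $K$.

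From this I would deduce that $K_N$ is in fact open. This is a standard translation argument: if $x\in K_N$ lies in the interior, then for any $g\in K_N$ the set $gx^{-1}\cdot\mathrm{int}(K_N)$ is an open neighbourhood of $g$ contained in $K_N$ (since $K_N$ is a subgroup), so every point of $K_N$ is interior.

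Next, an open subgroup of a compact group has finite index, since the cosets form an open partition of the compact space $K$. Let $g_1,\ldots,g_r$ be coset representatives of $K_N$ in $K$. Because $\bigcup_{n\in\mathbb{N}}K_n=K$ and the sequence is increasing, there is some index $M\geq N$ such that $g_1,\ldots,g_r\in K_M$. Then $K_M$ contains $K_N$ together with a full set of coset representatives for $K/K_N$, so $K_M=K$, as required.

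The only mildly subtle point is invoking Baire category in the compact (rather than complete-metric) setting; compact Hausdorff spaces are Baire, and topological groups considered in this paper are Hausdorff, so this is immediate. Everything else is routine, so I do not expect any real obstacle.
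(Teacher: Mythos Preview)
Your proof is correct but uses a different tool than the paper. The paper argues via Haar measure: since $\mu(K_n)\uparrow\mu(K)>0$, some $K_m$ has positive measure, and a closed subgroup of positive Haar measure is open; then compactness of $K$ forces the increasing open cover $\{K_n\}_{n\geq m}$ to stabilise. You instead invoke the Baire category theorem for compact Hausdorff spaces to find a $K_N$ with nonempty interior, then conclude by the coset argument. Both routes are short and standard; yours avoids appealing to the existence of Haar measure at the cost of invoking Baire, while the paper's argument dispenses with the explicit coset-representative step by using compactness directly once openness is in hand.
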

\begin{proof}
Denote the Haar measure on $K$ by $\mu$. Then $\lim_{n\to\infty} \mu(K_n) = \mu(K)$ and so there is an $m$ such that $\mu(K_m)>0$. Hence $K_m$ is open and $\{K_n\}_{n\geq m}$ is an increasing sequence of open subgroups with union equal to $K$. Therefore $K_n = K$ for some $n$. 
\end{proof}

Here, finally, are the criteria for $L_V$ to be contained in $V$. 
\begin{proposition}
\label{prop:Vplusplus}
Let $V$ be tidy above for the endomorphism $\alpha$. Then the following are equivalent:
\begin{enumerate}
\item\label{Vplusplus1} $L_V\leq V$;
\item\label{Vplusplus2} if $\alpha^m(v)\in V$ and $\alpha^n(v)\in V$ for some $0\leq m\leq n$, then $\alpha^k(v)\in V$ for every $m\leq k\leq n$;
\item\label{Vplusplus3} $V_{++} \cap V = V_+$ and $K_V\leq V$; and 
\item\label{Vplusplus4} $V_{++}$ is closed and $\left\{ [\alpha^{n+1}(V_+) : \alpha^n(V_+)]\right\}_{n\in\mathbb{N}}$ is constant.
\end{enumerate}
If these conditions are satisfied, then $L_V = V_+\cap V_-$. 
\end{proposition}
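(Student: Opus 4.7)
I will prove the equivalence by establishing the cyclic chain $(1)\Rightarrow(2)\Rightarrow(3)\Rightarrow(4)\Rightarrow(1)$ and then deduce $L_V=V_+\cap V_-$ from (1). For $(1)\Rightarrow(2)$, apply Lemma~\ref{lem:cut} to $v$ at times $m$ and $n$ to factor $v=yz$ with $\alpha^m(y)\in V_+$, $\alpha^n(y)\in V_-$, and $\alpha^k(z)\in V$ on $m\leq k\leq n$. Then $\alpha^m(y)\in\mathcal{L}_V\leq L_V$, and the $\alpha$-stability of $L_V$ combined with (1) gives $\alpha^k(y)\in L_V\leq V$ for every $k\geq m$, so $\alpha^k(v)=\alpha^k(y)\alpha^k(z)\in V$ on $[m,n]$. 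For $(2)\Rightarrow(3)$, take $w\in V_{++}\cap V$ and write $w=\alpha^n(v)$ with $v\in V_+\leq V$; by (2) each $\alpha^k(v)\in V$ for $0\leq k\leq n$, so concatenating this finite piece with an $\alpha$-regressive sequence for $v$ inside $V_+$ produces a regressive sequence for $w$ in $V$, placing $w\in V_+$. The same reasoning applied to $g=\alpha^m(v)\in\mathcal{K}_V$ with $\alpha^n(v)=\ident$ shows $K_V\leq V$. For $(3)\Rightarrow(4)$, $V_{++}\cap V=V_+$ is closed, so Lemma~\ref{lem:bourbaki} makes $V_{++}$ itself closed, and $K_V\leq V$ is translated to constancy by Proposition~\ref{prop:TB_criterion1}.

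The main work lies in $(4)\Rightarrow(1)$. Since $\mathcal{L}_V\subseteq V_{++}$ and $V_{++}$ is closed under (4), the compact subgroup $L_V$ is contained in $V_{++}$. Iterating the final argument in the proof of Proposition~\ref{prop:TB_criterion1} shows that constancy of the index sequence forces $\ker(\alpha)\cap\alpha^{n+1}(V_+)\leq\alpha^n(V_+)$ for every $n$, and hence by induction $\ker(\alpha^k)\cap V_{++}\leq K_V\leq V$ for every $k\in\mathbb{N}$. Because $V_{++}$ is closed in $G$, hence locally compact Hausdorff and Baire, and is the countable union of the closed sets $\alpha^n(V_+)$, some $\alpha^{n_0}(V_+)$ has nonempty interior in $V_{++}$; together with the finiteness of $[\alpha^{n_0}(V_+):V_+]$ supplied by constancy, this shows that $V_+$ is clopen in $V_{++}$. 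The hardest part of the argument is using these structural ingredients to force $L_V\leq V$: given $l\in\mathcal{L}_V$ written as $l=\alpha^m(y)$ with $y\in V_+$ and $\alpha^{m+n}(y)\in V_-$, the clopen structure of $V_+$ in $V_{++}$ together with the identity $\alpha^{E_V}(V_+\cap L_V)=\alpha^{E_V}(V_+)\cap L_V$ (a consequence of $\ker(\alpha^{E_V})\cap V_{++}\leq K_V\leq L_V$) and the refined factorization $L_V=\alpha^{E_V}(V_+\cap L_V)(V_-\cap L_V)$ coming from Proposition~\ref{prop:Liscompact} will yield that each intermediate $\alpha^k(y)$ lies in $V$, so $l\in V$; passing to the closure gives $L_V\leq V$, which is (1).

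For the concluding equality $L_V=V_+\cap V_-$, the containment $V_+\cap V_-\leq\mathcal{L}_V\leq L_V$ is immediate from the definitions. For the reverse, let $l\in L_V$; tidiness above writes $l=l_+l_-$ with $l_\pm\in V_\pm$. Because $L_V\leq V$ by (1) and $L_V$ is $\alpha$-stable, $\alpha^k(l)\in L_V\leq V$ for every $k\geq 0$, so the identity $\alpha^k(l_+)=\alpha^k(l)\alpha^k(l_-)^{-1}$ places $\alpha^k(l_+)$ in $V\cdot V_-^{-1}\subseteq V$, showing $l_+\in V_-$ and hence $l_+\in V_+\cap V_-$. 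The element $l_-=l_+^{-1}l$ then lies in $L_V$ and admits an $\alpha$-regressive sequence inside $L_V\leq V$, so $l_-\in V_+$ and therefore also in $V_+\cap V_-$. Consequently $l=l_+l_-\in V_+\cap V_-$, completing the proof.
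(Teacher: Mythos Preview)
Your implications $(1)\Rightarrow(2)\Rightarrow(3)\Rightarrow(4)$ are correct and match the paper's proof (you argue $(1)\Rightarrow(2)$ directly rather than by contraposition, but it is the same use of Lemma~\ref{lem:cut}). Your argument for the final equality $L_V=V_+\cap V_-$ is also correct and in fact more explicit than the paper, which states this conclusion but does not spell it out.

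The genuine gap is in $(4)\Rightarrow(1)$. You correctly assemble several ingredients: $L_V\leq V_{++}$ from closedness; $K_V\leq V$ from constancy via Proposition~\ref{prop:TB_criterion1}; the Baire argument giving $V_+$ open in $V_{++}$; the identity $\alpha^{E_V}(V_+\cap L_V)=\alpha^{E_V}(V_+)\cap L_V$; and the factorization $L_V=(\alpha^{E_V}(V_+)\cap L_V)(V_-\cap L_V)$. But you then write that these ``will yield that each intermediate $\alpha^k(y)$ lies in $V$'' without saying how, and I do not see a way to extract this from the listed facts. The obstacle is that the first factor $\alpha^{E_V}(V_+\cap L_V)$ of your refined factorization is a subset of $\alpha^{E_V}(V_+)$, which need not be contained in $V$; nothing you have written forces elements of $\mathcal{L}_V$ that sit in $\alpha^{E_V}(V_+)\setminus V_+$ back into $V$.

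The paper closes the circle by contraposition, and the key idea you are missing is to exploit the \emph{compactness of $L_V$} rather than the Baire property of $V_{++}$. Assuming $L_V\not\leq V$ and $V_{++}$ closed, one applies Lemma~\ref{lem:fact} to the increasing family $\{\alpha^n(V_+)\cap L_V\}_{n\in\mathbb{N}}$ of closed subgroups covering the compact group $L_V$: there is a \emph{smallest} $n\geq1$ with $L_V\leq\alpha^n(V_+)$. Picking $x\in L_V\cap\bigl(\alpha^n(V_+)\setminus\alpha^{n-1}(V_+)\bigr)$ and using $\alpha$-stability of $L_V$ to find $y\in\alpha^{n-1}(V_+)$ with $\alpha(y)=\alpha(x)$, one obtains $xy^{-1}\in\ker(\alpha)\cap\bigl(\alpha^n(V_+)\setminus\alpha^{n-1}(V_+)\bigr)$. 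Tracing this element back along $V_+$ (using $V\cap\alpha(V_+)=V_+$ from Lemma~\ref{lem:Uplus}) produces an element of $\mathcal{K}_V\setminus V$, whence $K_V\not\leq V$ and, by Proposition~\ref{prop:TB_criterion1}, the index sequence is not constant. This is the missing mechanism: it is the minimality of $n$ that manufactures the kernel element outside $V$, and your direct approach has no analogue of this step.
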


\begin{proof} $(\ref{Vplusplus1})\Rightarrow(\ref{Vplusplus2})$ Suppose that (\ref{Vplusplus2}) fails, so that  there is $v\in G$ with $\alpha^m(v)\in
V$, $\alpha^n(v)\in V$ and  $\alpha^k(v)\not\in V$ for some $m\leq k\leq
n$. Then, by Lemma~\ref{lem:cut}, there is a $y \in G$ such
that $\alpha^m(y)\in V_+$ and
$\alpha^n(y)\in V_-$ but $\alpha^k (y) \not\in V$. Hence
$\alpha^k(y)$ belongs to ${\mathcal L}_V\setminus V$ and (\ref{Vplusplus1}) fails to hold.

$(\ref{Vplusplus2}) \Rightarrow (\ref{Vplusplus3})$ Suppose that
(\ref{Vplusplus2}) holds. Let $w\in V_{++}
\cap V$, so that $w= \alpha^n(v)$ for some $v\in
V_+$ and $w\in V$. Then (\ref{Vplusplus2}) implies that $\alpha^k(v)\in V$ for every $0\leq k\leq n$. Hence $w\in V_+$. Therefore $V_{++} \cap V \subset V_+$. The reverse inclusion is
immediate. Similarly, if $w$ belongs to $\mathcal{K}_V$, there are $m\leq n\in \mathbb{N}$ and $v\in V_+$ such that $w = \alpha^m(v)$ and $\alpha^n(v) = \ident$. Then (\ref{Vplusplus2}) implies that $w\in V$ and it has been shown that $\mathcal{K}_V\leq V$. Then $K_V\leq V$ because $V$ is closed. 

$(\ref{Vplusplus3})\Rightarrow(\ref{Vplusplus4})$ Assuming that (\ref{Vplusplus3}) holds, $V_{++} \cap V$ is closed because it is equal to~$V_+$. Then Lemma~\ref{lem:bourbaki} implies that $V_{++}$ is closed. Furthermore, we have that $K_V\leq V$ and it follows from Proposition~\ref{prop:TB_criterion1} that $[\alpha^{n+1}(V_+) : \alpha^n(V_+)]$ does not depend on~$n$. 

$(\ref{Vplusplus4})\Rightarrow(\ref{Vplusplus1})$ Assume that
(\ref{Vplusplus1}) fails and that $V_{++}$ is closed. Then closedness of $V_{++}$ implies that $L_V\leq V_{++}$, whence $\{\alpha^n(V_+)\}_{n\in\mathbb{N}}$ is an increasing sequence of closed subgroups of $V_{++}$ that covers $L_V$. Hence, $\{\alpha^n(V_+)\cap L_V\}_{n\in\mathbb{N}}$ is an increasing sequence of closed subgroups of $L_V$ whose union is equal to $L_V$ and there is, by Lemma~\ref{lem:fact}, an $n$ such that $\alpha^n(V_+)\cap L_V = L_V$. Choose $n$ to be the smallest value for which this holds. Then $n\geq1$ because $L_V\not\leq V$ by assumption and $\left(\alpha^n(V_+)\setminus \alpha^{n-1}(V_+)\right)\cap L_V$ is not empty. Let $x\in \left(\alpha^n(V_+)\setminus \alpha^{n-1}(V_+)\right)\cap L_V$. Then $\alpha(x)$ is in $L_V$ because $x$ is and so, since $\alpha^n(V_+)$ covers $L_V$, there is $y\in \alpha^{n-1}(V_+)$ such that $\alpha(y) = \alpha(x)$. Since $xy^{-1}$ is in $V_{++}$ and $\alpha(xy^{-1}) = \ident$, $xy^{-1}$ belongs to $K_V$ by definition. Since $x\in \alpha^n(V_+)\setminus \alpha^{n-1}(V_+)$ and $y\in \alpha^{n-1}(V_+)$, it follows that $xy^{-1}$ belongs to $\left(\alpha^n(V_+)\setminus \alpha^{n-1}(V_+)\right)\cap K_V$. In particular, $xy^{-1}\in K_V\setminus V_+$ and so, although $xy^{-1}$ itself might belong to $V$, $xy^{-1} = \alpha^m(z)$ for some $m\in\mathbb{N}$ and $z\in K_V\setminus V$. Hence $\left\{ [\alpha^{n+1}(V_+) : \alpha^n(V_+)]\right\}_{n\in\mathbb{N}}$ is not constant, by Proposition~\ref{prop:TB_criterion1}, and~(\ref{Vplusplus4}) fails. 
\end{proof}

The criteria identified in Proposition~\ref{prop:Vplusplus} will be given a name.
\begin{definition}
\label{defn:tidy_below}
The compact, open subgroup $V$ is \emph{tidy below} for $\alpha$ in $\End{G}$ if:
\begin{description}
\item[TB1] $V_{++}$ is closed; and 
\item[TB2] the sequence $\left\{ [\alpha^{n+1}(V_+) : \alpha^n(V_+)]\right\}_{n\in\mathbb{N}}$ is constant.
\end{description}
\end{definition}

When $\alpha$ is an automorphism, it follows from results in~\cite{Wi:structure} that $V$ is tidy below for $\alpha$ if and only if it is tidy below for $\alpha^{-1}$, that is, $V_{--} := \bigcup_{n\geq0} \alpha^{-n}(V_-)$ is closed. When $\alpha$ is not an automorphism, the symmetry between $\alpha$ and $\alpha^{-1}$ can no longer be exploited and, indeed, it is no longer possible to define $V_{--}$ as a union of images under $\alpha^{-1}$. However it turns out that the definition of $V_{--}$ as a union of inverse images makes sense and that tidiness below can be characterized in terms of it. 
\begin{proposition}
\label{prop:inV}
Let $V$ be a compact, open subgroup of $G$ that is tidy above for the endomorphism $\alpha$. Then $V$ is tidy below for $\alpha$ if and only if 
$$
V_{--} := \bigcup_{n\in\mathbb{N}} \alpha^{-n}(V_-)
$$ 
is closed. 
\end{proposition}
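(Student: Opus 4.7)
My plan is to use Proposition~\ref{prop:Vplusplus} on both sides: translate ``tidy below'' into the condition $L_V\le V$ (together with the equivalent assertion (\ref{Vplusplus2}) that $\alpha$-orbits cannot exit and re-enter $V$), and then relate this to closedness of $V_{--}$ by computing $V_{--}\cap V$ in one direction and using compactness of $L_V$ in the other.

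For the forward direction, assume $V$ is tidy below. Then $L_V\le V$ by Proposition~\ref{prop:Vplusplus}, and moreover the equivalent condition (\ref{Vplusplus2}) of that proposition holds. I claim $V_{--}\cap V=V_-$. The inclusion $V_-\subseteq V_{--}\cap V$ is immediate. For the reverse, take $v\in V\cap\alpha^{-n}(V_-)$; then $\alpha^n(v)\in V_-$ forces $\alpha^k(v)\in V$ for all $k\ge n$, while applying (\ref{Vplusplus2}) with $m=0$ to the pair $v,\alpha^n(v)\in V$ gives $\alpha^k(v)\in V$ for $0\le k\le n$, so $v\in V_-$. Thus $V_{--}\cap V=V_-$ is closed (indeed compact), and Lemma~\ref{lem:bourbaki} applied with $\mathcal U=V$ yields that $V_{--}$ is closed.

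For the backward direction, assume $V_{--}$ is closed. First observe $\mathcal L_V\subseteq V_{--}$: if $x=\alpha^m(y)$ with $y\in V_+$ and $\alpha^N(y)\in V_-$, then either $N\le m$ (in which case $x\in V_-\subseteq V_{--}$) or $\alpha^{N-m}(x)=\alpha^N(y)\in V_-$, so $x\in\alpha^{-(N-m)}(V_-)\subseteq V_{--}$. Consequently $L_V=\overline{\mathcal L_V}\subseteq V_{--}$. Now write
\[
L_V=\bigcup_{n\ge0}\bigl(L_V\cap\alpha^{-n}(V_-)\bigr).
\]
Each $L_V\cap\alpha^{-n}(V_-)$ is a closed subgroup of $L_V$, and the sequence is increasing because $\alpha(V_-)\le V_-$. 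Since $L_V$ is compact by Proposition~\ref{prop:Liscompact}, Lemma~\ref{lem:fact} supplies an $n$ with $L_V\subseteq\alpha^{-n}(V_-)$, that is, $\alpha^n(L_V)\le V_-$. But $L_V$ is $\alpha$-stable, so $\alpha^n(L_V)=L_V$ and hence $L_V\le V_-\le V$. Proposition~\ref{prop:Vplusplus} then ensures that $V$ is tidy below.

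The only delicate step is the identification $V_{--}\cap V=V_-$ in the forward direction, where one must notice that condition (\ref{Vplusplus2}) is exactly strong enough to prevent an orbit from ``dipping out'' of $V$ before landing in $V_-$; the backward direction is essentially a compactness argument using that $L_V$ is compact and $\alpha$-invariant, and is where Lemma~\ref{lem:fact} plays its natural role.
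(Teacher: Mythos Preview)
Your proof is correct and follows essentially the same route as the paper's: both directions match the paper's argument step for step, using Proposition~\ref{prop:Vplusplus}(\ref{Vplusplus2}) and Lemma~\ref{lem:bourbaki} for the forward implication, and the inclusion $\mathcal{L}_V\subseteq V_{--}$ together with Lemma~\ref{lem:fact} and $\alpha$-stability of $L_V$ for the converse. The only difference is cosmetic---you spell out the verification that $\mathcal{L}_V\subseteq V_{--}$ and the reason the sequence $\{\alpha^{-n}(V_-)\}$ is increasing, whereas the paper leaves these ``by definition''.
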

\begin{remark}
Since $V_-$ is a closed subgroup of $G$, $V_{--}$ is an increasing union of closed subgroups and is therefore a subgroup. 
\end{remark}
\begin{proof}
Assume that $V$ is tidy below and consider $v\in V_{--}\cap V$. Then, by definition, there is $n\geq0$ such that $\alpha^n(v)$ belongs to $V_-$ and it follows by Proposition~\ref{prop:Vplusplus}(\ref{Vplusplus2}) that $\alpha^k(v)\in V$ for every $k\in \{0,1,\dots, n\}$. Hence $v\in V_-$ and we have shown that  $V_{--}\cap V = V_-$, which is closed. Therefore, by Lemma~\ref{lem:bourbaki}, $V_{--}$ is closed. 

Assume that $V_{--}$ is closed. Then, since $\mathcal{L}_V\leq V_{--}$ by definition, we have $L_V\leq V_{--}$ and $\{\alpha^{-n}(V_-)\cap L_V\}_{n\in\mathbb{N}}$ is an increasing sequence of closed subgroups that covers $L_V$. Hence, by Lemma~\ref{lem:fact}, there is an $n$ such that $\alpha^{-n}(V_-)\cap L_V = L_V$, so that $\alpha^n(L_V)\leq V_-$. Since $L_V$ is $\alpha$-stable, it follows that $L_V\leq V_-$. In particular, $L_V\leq V$ and $V$ is tidy below by Proposition~\ref{prop:Vplusplus}.
\end{proof}

\section{Minimizing subgroups are tidy and conversely}
\label{sec:Min_equiv_Tidy}

It has been seen in previous sections that the compact, open subgroup $V$ is not minimizing for $\alpha$ if it is not tidy above for $\alpha$ or if it does not contain $L_V$, that is, is not tidy below for $\alpha$. In this section it is shown that tidiness above and below suffice to ensure that $V$ is minimizing. Hence $V$ is minimizing for $\alpha$ if and only if it is tidy for $\alpha$ as now defined.  
\begin{definition}
\label{defn:tidy}
The compact, open subgroup $V$ is \emph{tidy} for $\alpha$ in $\End{G}$ if it is both tidy above and tidy below for $\alpha$. 
\end{definition}
The proof goes by reviewing the arguments of the previous sections to show that every minimizing subgroup is tidy. Then it is shown, in Proposition~\ref{prop:scalesame}, that all tidy subgroups have the same displacement index $[\alpha(V) : \alpha(V)\cap V]$, which must therefore be the minimum. An important step is to show that the intersection of two subgroups tidy for $\alpha$ is tidy, see Proposition~\ref{prop:intersection}. 

Let $\alpha$ be an endomorphism of $G$. The following procedure takes a
given compact open subgroup, $U$, of $G$ and modifies it to produce a compact, open subgroup that is tidy for $\alpha$. 
\begin{description}
\item[Step 1] Denote $U_{-n} = \bigcap_{k=0}^n
\alpha^{-k}(U)$. Then $U_{-n}$ is tidy above for all $n$ sufficiently large by, Proposition~\ref{prop:propertyTA}. Let $N$ be the first integer such that $U_{-N}$ is tidy above and put $V= U_{-N}$. Then
$$
\ind{\alpha(V)}{\alpha(V)\cap V} \leq \ind{\alpha(U)}{\alpha(U)\cap U},
$$
with equality if and only if $U$ is tidy above, in which case $V=U$.
\item[Step 2] For this $V$ define ${\mathcal L}_V$ and $L_V$ as in Definition~\ref{de:curlyL}:
$$
\mathcal{L}_V = \left\{ x \in G : \exists y\in V_+\text{ and } m,n\in
{\mathbb{N}}\text{ with } \alpha^{m}(y)=x 
 \text{ and }\alpha^n (x) \in V_-  \right\},
 $$
 and $L_V = \overline{\mathcal{L}_V}$.\\
Proposition~\ref{prop:Liscompact} shows that ${ L}_V$ is a compact, $\alpha$-stable subgroup of $G$.
\item[Step 3] Define $\widetilde{V} := \left\{ x\in V\mid xL_V\subseteq L_VV\right\}$ and put $W = \widetilde{V}L_V$.  Then $W$ is a compact, open subgroup of $G$ by Lemma~\ref{lem:commute_groups}, is tidy above for $\alpha$ by Proposition~\ref{prop:define_W} and is tidy below by Proposition~\ref{prop:Vplusplus}. Proposition~\ref{prop:define_W} further shows that 
$$
\ind{\alpha(W)}{\alpha(W)\cap W} \leq \ind{\alpha(V)}{\alpha(V)\cap V}
$$
with equality if and only if $V$ is tidy below, in which case
$W=V$.
\end{description} 

This tidying procedure implies the following. 
\begin{proposition}
\label{prop:min=>tidy}
Every compact, open subgroup of $G$ that is minimizing for $\alpha$ in $\End{G}$ is tidy for $\alpha$. There exist compact, open subgroups of $G$ that are tidy for $\alpha$. \endproof 
\end{proposition}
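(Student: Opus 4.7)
The proof is essentially a direct application of the three-step tidying procedure just described, combined with the inequalities established in Propositions~\ref{prop:propertyTA} and~\ref{prop:define_W}.

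The plan is to observe that the tidying procedure takes an arbitrary compact, open subgroup $U$ of $G$ and produces, in finitely many steps, a compact, open subgroup $W$ which is tidy above (by Proposition~\ref{prop:define_W}(\ref{prop:define_W1})) and tidy below (by Proposition~\ref{prop:Vplusplus}, whose hypotheses are verified in Proposition~\ref{prop:define_W}(\ref{prop:define_W3}) since $L_W = L_V \leq W$). By Definition~\ref{defn:tidy}, $W$ is therefore tidy for $\alpha$. Since $G$ has at least one compact, open subgroup by van Dantzig's Theorem~\ref{thm:COSubgroups}, the existence of tidy subgroups follows.

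For the first assertion, suppose $U$ is a compact, open subgroup that is minimizing for $\alpha$. Apply the tidying procedure to $U$: first obtain $V = U_{-N}$ as in Step~1, then obtain $W = \widetilde{V}L_V$ as in Step~3. By the two inequalities
\[
\ind{\alpha(W)}{\alpha(W)\cap W} \leq \ind{\alpha(V)}{\alpha(V)\cap V} \leq \ind{\alpha(U)}{\alpha(U)\cap U}
\]
supplied by Propositions~\ref{prop:propertyTA} and~\ref{prop:define_W}(\ref{prop:define_W4}), together with the assumption that $U$ attains the minimum of the displacement index, both inequalities must in fact be equalities. Equality in the first (Proposition~\ref{prop:propertyTA}) forces $U$ to satisfy \TA, so in particular $V = U_{-N} = U$ (indeed $N = 0$ suffices since $U$ is already tidy above). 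Equality in the second (Proposition~\ref{prop:define_W}(\ref{prop:define_W4})) forces $L_V \leq V$, so that $V$ is tidy below by Proposition~\ref{prop:Vplusplus} and $W = V = U$. Hence $U$ is tidy above and tidy below, i.e.\ $U$ is tidy for $\alpha$.

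No step presents a serious obstacle here since all the work has been carried out in the preceding sections; the proposition is essentially a summary statement. The one point to be careful about is ensuring the equality clauses in Propositions~\ref{prop:propertyTA} and~\ref{prop:define_W}(\ref{prop:define_W4}) are invoked in the correct direction (minimality of $U$ forces equalities, which in turn force the two tidiness conditions), rather than the more familiar direction of using tidiness to deduce equalities.
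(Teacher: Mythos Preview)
Your proof is correct and is exactly the argument the paper intends: the proposition is stated immediately after the three-step tidying procedure with nothing more than an \endproof, so you have faithfully unpacked what the paper leaves implicit. The only thing to note is that the paper's Step~3 phrases the equality condition directly as ``$V$ is tidy below'' rather than ``$L_V\leq V$'', but since Proposition~\ref{prop:Vplusplus} makes these equivalent (for $V$ tidy above) your formulation is equally valid.
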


As stated above, the first step of the proof of the converse to Proposition~\ref{prop:min=>tidy} is to show that the intersection of two tidy subgroups is tidy. A couple of technical results are needed for this. The first is a reformulation of Lemmas~\ref{lem:criterion} and~\ref{lem:criterion2} for tidy subgroups.
\begin{proposition}
\label{prop:winWplus}
Let $\alpha\in\End{G}$ and suppose that $V$ is tidy for $\alpha$. 
\begin{enumerate}
\item If $v\in V$ has an $\alpha$-regressive sequence that has an accumulation point, then $v$ belongs to $V_+$. 
\label{lem:winWplus1}
\item If $\{\alpha^{n}(v)\}_{n\in\mathbb{N}}$ has an accumulation point, then $v$ belongs to $V_-$. 
\label{lem:winWplus2}
\end{enumerate}
\end{proposition}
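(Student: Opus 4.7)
The plan is to reduce each part to the corresponding criterion from Section~\ref{sec:V0} (Lemmas~\ref{lem:criterion} and~\ref{lem:criterion2}) by using the \TA\ factorisation $V = V_+V_-$, and then to exploit the last sentence of Proposition~\ref{prop:Vplusplus}, which asserts that for a tidy subgroup $L_V = V_+\cap V_-$.

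For part~(\ref{lem:winWplus1}), I would begin by writing $v = v_+v_-$ with $v_\pm\in V_\pm$. Fix an $\alpha$-regressive sequence $\{v_n\}_{n\in\mathbb{N}}$ for $v$ with an accumulation point $c$. Since $v_+\in V_+$, the definition of $V_+$ in the Main Theorem supplies an $\alpha$-regressive sequence $\{y_n\}_{n\in\mathbb{N}}$ for $v_+$ that lies in $V_+$. Set $w_n = v_ny_n^{-1}$; a direct check shows $w_0 = v_-$ and $\alpha(w_{n+1}) = w_n$, so $\{w_n\}$ is $\alpha$-regressive for $v_-$. Choose a subsequence $n_k$ with $v_{n_k}\to c$; since $\{y_{n_k}\}\subset V_+$ and $V_+$ is compact, pass to a further subsequence on which $y_{n_k}\to y$, so that $w_{n_k}\to cy^{-1}$, giving an accumulation point of $\{w_n\}$. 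Because $v_-\in V_-$ we may take $k=0$ in Lemma~\ref{lem:criterion2}, concluding that $v_-\in L_V$. The tidy hypothesis and the last sentence of Proposition~\ref{prop:Vplusplus} yield $L_V = V_+\cap V_-$, whence $v_-\in V_+$ and therefore $v = v_+v_-\in V_+$.

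For part~(\ref{lem:winWplus2}), I would argue symmetrically, again factoring $v = v_+v_-$ with $v_\pm\in V_\pm$. The sequence $\{\alpha^n(v_-)\}_{n\in\mathbb{N}}$ lies in the compact group $V_-$ and so has an accumulation point; combining this with the hypothesis that $\{\alpha^n(v)\}_{n\in\mathbb{N}}$ has an accumulation point, a diagonal extraction produces a single subsequence $n_k$ along which both $\alpha^{n_k}(v)$ and $\alpha^{n_k}(v_-)$ converge. Consequently $\alpha^{n_k}(v_+) = \alpha^{n_k}(v)\alpha^{n_k}(v_-)^{-1}$ converges as well, so $\{\alpha^n(v_+)\}_{n\in\mathbb{N}}$ has an accumulation point. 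Applying Lemma~\ref{lem:criterion} with $k=0$ to $v_+\in V_+$ gives $v_+\in L_V = V_+\cap V_-$, so $v_+\in V_-$ and $v = v_+v_-\in V_-$.

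The only real subtlety is the sequence bookkeeping needed to ensure that the auxiliary sequences one constructs actually inherit an accumulation point; this is handled cleanly by extracting subsequences inside the compact groups $V_\pm$. The conceptual engine is the identification $L_V = V_+\cap V_-$ from Proposition~\ref{prop:Vplusplus}, which is exactly what upgrades the weak conclusion $v_\mp\in L_V$ coming from Lemmas~\ref{lem:criterion} and~\ref{lem:criterion2} to the desired $v_\mp\in V_\pm$.
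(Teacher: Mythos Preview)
Your approach is essentially identical to the paper's: factor $v$ via \TA, transfer the regressive sequence (resp.\ forward orbit) to $v_-$ (resp.\ $v_+$), apply Lemma~\ref{lem:criterion2} (resp.\ Lemma~\ref{lem:criterion}), and then invoke $L_V = V_+\cap V_-$ from Proposition~\ref{prop:Vplusplus}. One small slip in part~(\ref{lem:winWplus1}): with $w_n = v_n y_n^{-1}$ you obtain $w_0 = v\,v_+^{-1} = v_+ v_- v_+^{-1}$, which need not lie in any $\alpha^{-k}(V_-)$; you want $w_n = y_n^{-1} v_n$ so that $w_0 = v_+^{-1} v = v_-$, exactly as the paper writes.
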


\begin{proof}
(\ref{lem:winWplus1}) Since $V$ satisfies \TA, $v = v_+v_-$, where $v_\pm\in V_\pm$. Then $v_- = v_+^{-1}v$ has an $\alpha$-regressive sequence with an accumulation point
and it follows, by Lemma~\ref{lem:criterion2}, that $v_-$ is in $L_V$. Since $V$ satisfies \TB, $L_V = V_+\cap V_-$ and $v$ belongs to $V_+$. 

(\ref{lem:winWplus2})
A similar argument that appeals to Lemma~\ref{lem:criterion} establishes the claim.
\end{proof}

\begin{lemma}
\label{lem:intersect_plus}
Let $W^{(1)}$ and $W^{(2)}$ be compact open subgroups of $G$ that are tidy for the endomorphism $\alpha$.  Then 
$$
W_+^{(1)} \cap W_+^{(2)} = \left(W^{(1)}\cap W^{(2)}\right)_+\text{ and }W_-^{(1)} \cap W_-^{(2)} = \left(W^{(1)}\cap W^{(2)}\right)_-.
$$
\end{lemma}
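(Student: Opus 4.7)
The $-$ equation is immediate from the definitions: an element lies in $W^{(1)}_- \cap W^{(2)}_-$ precisely when it belongs to $W^{(1)} \cap W^{(2)}$ and all of its forward $\alpha$-iterates lie in $W^{(1)}$ and in $W^{(2)}$, which is exactly the condition defining $(W^{(1)} \cap W^{(2)})_-$. The inclusion $(W^{(1)} \cap W^{(2)})_+ \subseteq W^{(1)}_+ \cap W^{(2)}_+$ is also clear, since any $\alpha$-regressive sequence for $u$ lying in $W^{(1)} \cap W^{(2)}$ is simultaneously a witness for $u \in W^{(1)}_+$ and for $u \in W^{(2)}_+$. The whole task therefore reduces to the reverse inclusion $W^{(1)}_+ \cap W^{(2)}_+ \subseteq (W^{(1)} \cap W^{(2)})_+$.

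Given $u \in W^{(1)}_+ \cap W^{(2)}_+$, I would fix an $\alpha$-regressive sequence $\{y_n\}_{n \in \mathbb{N}}$ for $u$ inside $W^{(1)}$ and another $\{z_n\}_{n \in \mathbb{N}}$ for $u$ inside $W^{(2)}$. The plan is to show that each $y_n$ automatically belongs to $W^{(2)}$, so that $\{y_n\}$ itself serves as a regressive sequence for $u$ in $W^{(1)} \cap W^{(2)}$. The bridge between the two sequences is $k_n := y_n z_n^{-1}$. Since $\alpha^n(y_n) = u = \alpha^n(z_n)$, one has $k_n \in \ker(\alpha^n)$, and the identity $\alpha(y_{n+j+1} z_{n+j+1}^{-1}) = y_{n+j} z_{n+j}^{-1}$ shows that $\{k_{n+j}\}_{j \in \mathbb{N}}$ is itself a regressive sequence for $k_n$. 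This auxiliary sequence lies in the compact set $W^{(1)} W^{(2)}$ (the continuous image of $W^{(1)} \times W^{(2)}$ under multiplication) and therefore has an accumulation point.

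Corollary~\ref{cor:in_KV}, applied successively with $V = W^{(1)}$ and with $V = W^{(2)}$, now places $k_n$ in $K_{W^{(1)}} \cap K_{W^{(2)}}$; and because each $W^{(i)}$ is tidy below, Proposition~\ref{prop:TB_criterion1} gives $K_{W^{(i)}} \subseteq W^{(i)}$, so $k_n \in W^{(1)} \cap W^{(2)}$. Writing $y_n = k_n z_n$ with both factors in $W^{(2)}$ then yields $y_n \in W^{(2)}$, as required, completing the argument. The main obstacle is precisely this bridging step: producing, out of the a priori unrelated regressive sequences $\{y_n\}$ and $\{z_n\}$, a single element $k_n$ that is annihilated by a power of $\alpha$ and admits a bounded regressive sequence, so that Corollary~\ref{cor:in_KV} becomes applicable. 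Both \TBo\ (entering via Corollary~\ref{cor:in_KV}) and \TBt\ (giving $K_V \subseteq V$ via Proposition~\ref{prop:TB_criterion1}) are used crucially; tidiness above alone would not suffice.
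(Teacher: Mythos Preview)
Your proof is correct and follows essentially the same route as the paper's. The paper also forms the difference of the two regressive preimages, observes it lies in $\ker(\alpha^k)$ with a bounded $\alpha$-regressive sequence, and invokes Corollary~\ref{cor:bounded_ in_LV} to place it in $L_{W^{(2)}}\leq W^{(2)}$; it does this for a single step (showing $\alpha(v_1)=w$ with $v_1\in W^{(1)}_+\cap W^{(2)}_+$) and then inducts, whereas you treat all terms $y_n$ at once and route through $K_{W^{(i)}}$ via Corollary~\ref{cor:in_KV} instead of $L_{W^{(i)}}$ --- but since $K_V\leq L_V\leq V$ for tidy $V$, these are equivalent packagings of the same idea.
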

\begin{proof}
That $W_+^{(1)} \cap W_+^{(2)} \geq (W^{(1)}\cap W^{(2)})_+$ is clear. For the reverse inclusion, consider $w\in W_+^{(1)} \cap W_+^{(2)}$. Then $w = \alpha(v_i)$ with $v_i\in W_+^{(i)}$ for $i=1,2$. The element $v_1v_2^{-1}$ has a bounded $\alpha$-regressive sequence and satisfies $\alpha(v_1v_2^{-1}) = \ident$ whence, by Corollary~\ref{cor:bounded_ in_LV}, it belongs to $L_{W^{(2)}}$. Therefore $v_1 = (v_1v_2^{-1})v_2$ belongs to $W_+^{(2)}$ as well as to $W_+^{(1)}$. It has thus been shown that, for each $w\in W_+^{(1)} \cap W_+^{(2)}$, $w = \alpha(v_1)$ where $v_1\in W_+^{(1)} \cap W_+^{(2)}$. Induction  produces an $\alpha$-regressive sequence for $w$ that is contained in $W_+^{(1)} \cap W_+^{(2)}$, and it follows that $w$ belongs to $(W^{(1)} \cap W^{(2)})_+$. 

The equality $W_-^{(1)} \cap W_-^{(2)} = (W^{(1)}\cap W^{(2)})_-$ follows immediately from the definitions. 
\end{proof}

\begin{proposition}
\label{prop:intersection} 
Let $W^{(1)}$ and $W^{(2)}$ be compact open subgroups of $G$ that are tidy for the endomorphism $\alpha$.  Then $W^{(1)} \cap W^{(2)}$
is tidy for $\alpha$.
\end{proposition}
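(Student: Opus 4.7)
Set $W := W^{(1)} \cap W^{(2)}$. My plan is to verify \TA\ and tidiness below for $W$ directly, making essential use of Lemma~\ref{lem:intersect_plus}, which already identifies $W_\pm = W^{(1)}_\pm \cap W^{(2)}_\pm$.

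Tidiness below will follow once \TA\ is in hand. From Definition~\ref{de:curlyL} and the containments $W_\pm \leq W^{(i)}_\pm$ one obtains $\mathcal{L}_W \subseteq \mathcal{L}_{W^{(i)}}$, and hence $L_W \subseteq L_{W^{(i)}}$. Since each $W^{(i)}$ is tidy, Proposition~\ref{prop:Vplusplus} gives $L_{W^{(i)}} = W^{(i)}_+ \cap W^{(i)}_- \subseteq W^{(i)}$, so intersecting over $i = 1, 2$ yields $L_W \subseteq W$. Once \TA\ for $W$ is established, the implication $(\ref{Vplusplus1}) \Rightarrow (\ref{Vplusplus4})$ of Proposition~\ref{prop:Vplusplus} then delivers both \TBo\ and \TBt.

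The substantive step is \TA. Given $w \in W$, use tidiness above of $W^{(1)}$ and $W^{(2)}$ to write
\begin{equation*}
w = a_+ a_- = b_+ b_-, \qquad a_\pm \in W^{(1)}_\pm,\ b_\pm \in W^{(2)}_\pm,
\end{equation*}
and introduce the cross-element $c := b_+^{-1} a_+ = b_- a_-^{-1}$. The first presentation supplies a bounded $\alpha$-regressive sequence for $c$ inside the compact set $W^{(2)}_+ W^{(1)}_+$, obtained by termwise multiplying a regressive sequence for $b_+^{-1}$ in $W^{(2)}_+$ with one for $a_+$ in $W^{(1)}_+$. The second shows that $\{\alpha^n(c)\}_{n \geq 0}$ is confined to the compact set $W^{(2)}_- W^{(1)}_-$. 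If one can show that $c$ belongs to $L_{W^{(1)}} = W^{(1)}_+ \cap W^{(1)}_-$, then $b_+ = a_+ c^{-1} \in W^{(1)}_+ \cap W^{(2)}_+ = W_+$ and $b_- = c a_- \in W^{(1)}_- \cap W^{(2)}_- = W_-$, yielding the required factorisation $w = b_+ b_-$ with factors in $W_\pm$.

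The main obstacle is placing $c$ into $L_{W^{(1)}}$. Proposition~\ref{prop:winWplus} is not directly available because $c$ is not \emph{a priori} in $W^{(1)}$, and Lemma~\ref{lem:criterion2} requires some $\alpha^k(c) \in W^{(1)}_-$, whereas we only know $\alpha^k(c) \in W^{(2)}_- W^{(1)}_-$. I would attack this by using compactness of the forward orbit to extract a subsequence $\alpha^{n_k}(c) \to c_\infty$ and analysing the ``small'' differences $\alpha^{n_{k+1}}(c) \alpha^{n_k}(c)^{-1} = \alpha^{n_k}\bigl(\alpha^{n_{k+1} - n_k}(c) \cdot c^{-1}\bigr)$, which tend to the identity as $k \to \infty$. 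Combining these with the factorisations of $c$ in each $W^{(i)}$ should produce elements of $\ker(\alpha^j)$ that carry bounded $\alpha$-regressive sequences; Corollary~\ref{cor:bounded_ in_LV} then places such elements into $L_{W^{(1)}}$. The stability \TBt\ of the quotients $\alpha^{n+1}(W^{(i)}_+)/\alpha^n(W^{(i)}_+)$ guaranteed by Proposition~\ref{prop:TB_criterion1}, together with the closedness and $\alpha$-stability of $L_{W^{(1)}}$ (Proposition~\ref{prop:Liscompact}), should then allow one to telescope these approximations and conclude that $c$ itself lies in $L_{W^{(1)}}$.
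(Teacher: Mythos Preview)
Your reduction of tidiness below to \TA\ via $\mathcal{L}_W \subseteq \mathcal{L}_{W^{(i)}}$ and Proposition~\ref{prop:Vplusplus} is correct, and in fact cleaner than the paper's argument, which verifies \TBo\ and \TBt\ separately. Your setup for \TA\ is also sound: the cross-element $c = b_+^{-1}a_+ = b_- a_-^{-1}$ does carry a bounded $\alpha$-regressive sequence and a bounded forward orbit, and placing $c$ into $W^{(1)}_+ \cap W^{(1)}_-$ would indeed yield $b_+\in W_+$, $b_-\in W_-$ exactly as you say. This is essentially the same cross-element that appears in the paper's proof as $\hat{w}_+^{-1}w_+$.

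The gap is in your method for placing $c$ into $L_{W^{(1)}}$. The difficulty you identify is real: none of Lemma~\ref{lem:criterion}, Lemma~\ref{lem:criterion2}, Proposition~\ref{prop:bounded_ in_LV} or Proposition~\ref{prop:winWplus} applies, because nothing forces the bi-infinite $\alpha$-orbit through $c$ ever to enter $W^{(1)}$; it lives \emph{a priori} only in the compact sets $W^{(2)}_\pm W^{(1)}_\pm$, which need not meet $W^{(1)}$. Your proposed remedy --- extracting convergent subsequences of $\alpha^{n_k}(c)$, forming differences $\alpha^{n_{k+1}}(c)\,\alpha^{n_k}(c)^{-1}$, and hoping to produce elements of $\ker(\alpha^j)$ to which Corollary~\ref{cor:bounded_ in_LV} applies --- does not lead anywhere concrete. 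Those differences are not in any $\ker(\alpha^j)$, and even once you place them in $L_{W^{(1)}}$ (which you can, since they eventually lie in the open group $W^{(1)}$ and carry bounded orbits), this says nothing about $c$ itself: you cannot telescope back from the differences to $c$ without already knowing some $\alpha^{n_k}(c)$ lies in $L_{W^{(1)}}$.

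The paper resolves this with a different device. It takes an $\alpha$-regressive sequence $\{w_n\}$ for $a_+$ inside $W^{(1)}_+$, lets $u$ be an accumulation point (so $u \in W^{(1)}_+ \cap W^{(1)}_-$ by Lemma~\ref{lem:accpoint}), and then --- crucially --- uses that $W^{(2)}$ is \emph{open} to choose $N$ with $w_N \in W^{(2)}u$. Now $w_N u^{-1}$ genuinely lies in $W^{(1)}_+ \cap W^{(2)}$, so Proposition~\ref{prop:winWplus} applies and forces it into $W^{(2)}_+$. A parallel construction on the $W^{(2)}$ side produces $\hat{w}_N v^{-1}\in W^{(1)}_+\cap W^{(2)}_+$, and the product $v\hat{w}_N^{-1}w_N u^{-1}$ then lies in $W^{(1)}\cap W^{(2)}$ with a bounded bi-infinite orbit, so Proposition~\ref{prop:winWplus} applied to each $W^{(i)}$ finishes the job. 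The idea missing from your attempt is precisely this accumulation-point-plus-openness trick that manufactures elements lying inside both groups at once, so that the available criteria actually bite.
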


\begin{proof} Much of the argument involves establishing that the
intersection satisfies \TA\ for $\alpha$. Let $w$ be in $W^{(1)}
\cap W^{(2)}$.  Then $w = w_+w_-$, where $w_\pm \in W_\pm^{(1)}$, because $W^{(1)}$ satisfies \TA. However $w_+$ and $w_-$ need not be in $W^{(2)}$ even though $w$ is. It will be shown that there is $x$ in $W_+^{(1)} \cap W_-^{(1)}$ such that 
$$
w_+x^{-1} \in (W^{(1)} \cap W^{(2)})_+\text{ and }xw_- \in (W^{(1)} \cap W^{(2)})_-.
$$

Since $w_+$ belongs to $W_+^{(1)}$, there is an $\alpha$-regressive sequence $\{w_n\}_{n\in \mathbb{N}}\subset W_+^{(1)}$ for $w_+$. Let $u$ be an accumulation point of $\{w_n\}_{n\in \mathbb{N}}$. Then $u \in W_+^{(1)} \cap W_-^{(1)}$, by Lemma
\ref{lem:accpoint}.  Choose an integer, $N$, such that $w_{N} \in W^{(2)}u$. Then $w_{N} u^{-1}$ belongs to $W_+^{(1)}\cap  W^{(2)}$. Since $w_{N} u^{-1}$ is in $W_+^{(1)}$, it has a bounded $\alpha$-regressive sequence, $\{y_n\}_{n\in\mathbb{N}}$.  Then, since $w_{N} u^{-1}$ is in $W^{(2)}$ which is tidy, $w_{N}u^{-1}$ belongs to $W^{(2)}_+$ by Proposition~\ref{prop:winWplus}. Put $x = \alpha^{N} (u)$, which belongs to $W_+^{(1)}\cap W_-^{(1)}$. Then it has been shown that 
\begin{equation}
\label{eq:Wone}
w_+x^{-1} = \alpha^{N}(w_{N}u^{-1}) \in W_+^{(1)}\text{ and }
w_{N}u^{-1} \in W^{(1)}_+ \cap W^{(2)}_+.
\end{equation}

It is yet to be shown that $w_+x^{-1}$ is in $\left (W^{(1)} \cap W^{(2)}\right )_+$. To this end, use that $W^{(2)}$ satisfies
\TA\ for $\alpha$ to write $w = \hat{w}_\pm$ with $\hat{w}_\pm \in W_\pm^{(2)}$. Then there is an $\alpha$-regressive sequence $\{\hat{w}_n\}_{n\in\mathbb{N}}$ for $\hat{w}_+$ in $W_+^{(2)}$, and the argument of the previous paragraph may be repeated, possibly increasing the value of $N$, to find $v, y = \alpha^N(v)$ in $W_+^{(2)} \cap W_-^{(2)}$ such that 
\begin{equation}
\label{eq:Wtwo}
\hat{w}_+y^{-1} = \alpha^{N}(\hat{w}_{N}v^{-1}) \in W_+^{(2)}\text{ and }
\hat{w}_{N}v^{-1} \in W^{(1)}_+ \cap W^{(2)}_+.\end{equation} 

Then (\ref{eq:Wone}) and (\ref{eq:Wtwo}) imply that 
\begin{equation}
\label{eq:Wmeet}
v\hat{w}_N^{-1}w_Nu^{-1} \in W^{(1)}
\cap W^{(2)}.
\end{equation} 
Since $w_+w_- = \hat{w}_+\hat{w}_- = w$, we have
$$
\hat{w}_+^{-1}w_+ = \hat{w}_- {w}_-^{-1}\in \left( W^{(2)}_+W^{(1)}_+ \right)
\cap \left( W^{(2)}_-W^{(1)}_- \right).
$$
Then there is a bounded $\alpha$-regressive sequence for $\hat{w}_+^{-1}w_+$ and $\left\{\alpha^n(\hat{w}_+^{-1}w_+)\right\}_{n\in\mathbb{N}}$ is bounded. Hence there is a bounded $\alpha$-orbit, $\{s_n\}_{n\in\mathbb{N}}$ say, with $s_0 = \hat{w}_+^{-1}w_+$. Similiarly, since $u$ and $v$ belong to $W_+^{(1)}\cap W_-^{(1)}$ and $W_+^{(2)}\cap W_-^{(2)}$ respectively there are bounded $\alpha$-orbits $\{u_n\}_{n\in\mathbb{N}}$ and $\{v_n\}_{n\in\mathbb{N}}$ with $u_0 = u$ and $v_0 = v$. Hence  
$v\hat{w}_N^{-1}w_Nu^{-1}$ lies on a bounded $\alpha$-orbit and also lies in $W^{(1)}\cap W^{(2)}$ by~\eqref{eq:Wmeet}. Then applying Proposition~\ref{prop:bounded_ in_LV} or Proposition~\ref{prop:winWplus}  to $W^{(1)}$ and $W^{(2)}$ separately yields that
$$
v\hat{w}_N^{-1}w_Nu^{-1} \text{ belongs to }
\left(W_+^{(1)} \cap W_-^{(1)}\right) \cap \left( W_+^{(2)}\cap W_-^{(2)}\right).
$$ 
Hence $\alpha^N(v\hat{w}_N^{-1}w_Nu^{-1}) = y\hat{w}_+^{-1}w_+x^{-1}$ belongs to $(W_+^{(1)} \cap W_-^{(1)}) \cap ( W_+^{(2)}\cap W_-^{(2)})$ which, together with
the identity
$$
w_+ x^{-1} = (\hat{w}y^{-1})(y\hat{w}_+^{-1}w_+x^{-1}),
$$
(\ref{eq:Wone}), (\ref{eq:Wtwo}) and Lemma~\ref{lem:intersect_plus}, yields that
$$
w_+x^{-1} \in W^{(1)}_+\cap W^{(2)}_+ = \left(W^{(1)}\cap
W^{(2)}\right)_+.
$$

Turning to $xw_-$, the identity $xw_- = (w_+x^{-1})^{-1} w_+w_-$, where $w_+x^{-1}$ and  $w_+w_-$  belong to $W^{(1)}\cap W^{(2)}$, shows that $xw_-$ is in $W^{(1)}\cap
W^{(2)}$ as well. Then, since $\left \{ \alpha^n (xw_-)
\right
\}_{n \geq 0}$  has compact closure, Proposition~\ref{prop:winWplus}
applied to $W^{(1)}$ and $W^{(2)}$ separately, together with Lemma~\ref{lem:intersect_plus}, shows that 
$$
xw_- \in W_-^{(1)} \cap W^{(2)}_- = \left (W^{(1)} \cap
W^{(2)}\right )_-.
$$ 
Therefore $w = (w_+x^{-1})(xw_-)$ is the product of an element of
$\left (W^{(1)} \cap W^{(2)}\right )_+$ and an element of $\left(W^{(1)}
\cap W^{(2)} \right)_-$ and $W^{(1)} \cap W^{(2)}$ satisfies \TA.

To see that $W^{(1)} \cap W^{(2)}$ satisfies \TB1, let $y$ be in the closure of $\left (W^{(1)} \cap W^{(2)}\right)_{++}$. Then $y$ is in the closure of $W^{(i)}_{++}$ for $i = 1, 2$ and, since both $W^{(1)}$ and $W^{(2)}$ are tidy below, $y$ belongs to both $W^{(1)}_{++}$ and $W^{(2)}_{++}$. Hence there are $n \geq 0$ and $z_1$ and $z_2$ in  $W_+^{(1)}$ and $W_+^{(2)}$ respectively such that $\alpha^{n} (z_1) = y = \alpha^n(z_2)$. Then $z_1z_2^{-1}$ has a bounded $\alpha$-regressive sequence and $\alpha^n(z_1z_2^{-1}) = \ident$. Hence, by Corollary~\ref{cor:bounded_ in_LV}, $z_1z_2^{-1}$ is in $L_{W^{(2)}}$, whence $z_1 = (z_1z_2^{-1})z_2$ is in $W_+^{(1)}\cap W_+^{(2)}$  and $y$ belongs to $(W^{(1)} \cap W^{(2)})_{++}$. Therefore $(W^{(1)} \cap W^{(2)})_{++}$ is closed. To see that $W^{(1)} \cap W^{(2)}$ satisfies \TB2, note that since $W^{(1)}$ and $W^{(2)}$ are tidy below, $K_{W^{(1)}\cap W^{(2)}}\leq W^{(i)}$ for $i=1,2$ by Corollary~\ref{cor:bounded_ in_LV}. Therefore $K_{W^{(1)}\cap W^{(2)}}\leq W^{(1)}\cap W^{(2)}$. 
\end{proof}

That the displacement index is the same for all subgroups tidy for $\alpha$, which may now be proved, is  the last step in the proof that tidy subgroups are minimizing. 
\begin{proposition}
\label{prop:scalesame} 
 Let $\alpha\in\End{G}$ and let $W^{(1)}$ and $W^{(2)}$ be tidy for $\alpha$. Then
$$
\left[ \alpha (W^{(1)}) : \alpha (W^{(1)}) \cap W^{(1)} \right
] =  \left [ \alpha (W^{(2)}) : \alpha (W^{(2)}) \cap W^{(2)}
\right]. 
$$
\end{proposition}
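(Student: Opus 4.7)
The plan is to reduce to the case of two nested tidy subgroups by passing to the intersection, and then exploit the constancy asserted by \TBt\ through an integrality argument on coset indices. First I would set $V := W^{(1)} \cap W^{(2)}$, which is tidy for $\alpha$ by Proposition~\ref{prop:intersection}, and observe that it suffices to prove the following nested version: \emph{if $V \leq W$ are both tidy for $\alpha$, then $[\alpha(V):\alpha(V)\cap V] = [\alpha(W):\alpha(W)\cap W]$}. The proposition then follows by applying this statement to each pair $V \leq W^{(i)}$ for $i=1,2$.

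For the nested case, the key preliminary observation is that $V_+ = V \cap W_+$. The inclusion $V_+ \subseteq V \cap W_+$ is immediate since every $\alpha$-regressive sequence in $V$ also lies in $W$; conversely, any element of $V \cap W_+$ has a bounded $\alpha$-regressive sequence (from its regressive sequence in the compact group $W$) and therefore lies in $V_+$ by Proposition~\ref{prop:winWplus}(1) applied to the tidy subgroup $V$. Since $V$ is open in $G$, the subgroup $V_+$ is open in $W_+$, so $k := [W_+:V_+]$ is finite.

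Writing $s(U)$ for $[\alpha(U):\alpha(U)\cap U]$, Lemma~\ref{lem:indexreduced} gives $s(V) = [\alpha(V_+):V_+]$ and $s(W) = [\alpha(W_+):W_+]$ since both subgroups are tidy above. Condition \TBt\ tells us that the sequences $\{[\alpha^{n+1}(V_+):\alpha^n(V_+)]\}$ and $\{[\alpha^{n+1}(W_+):\alpha^n(W_+)]\}$ are constant, so telescoping along the ascending chains (ascending by Lemma~\ref{lem:Uplus}) gives $[\alpha^n(V_+):V_+] = s(V)^n$ and $[\alpha^n(W_+):W_+] = s(W)^n$. Combining the two chains $V_+ \leq \alpha^n(V_+) \leq \alpha^n(W_+)$ and $V_+ \leq W_+ \leq \alpha^n(W_+)$ with multiplicativity of coset indices yields
\begin{equation*}
[\alpha^n(W_+):\alpha^n(V_+)] \cdot s(V)^n = k \cdot s(W)^n.
\end{equation*}

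Finally, the well-defined surjection $W_+/V_+ \twoheadrightarrow \alpha^n(W_+)/\alpha^n(V_+)$ sending $wV_+$ to $\alpha^n(w)\alpha^n(V_+)$ gives $[\alpha^n(W_+):\alpha^n(V_+)] \leq k$, while the index is of course at least $1$. Thus $1 \leq k \cdot (s(W)/s(V))^n \leq k$ for every $n \geq 0$. Letting $n \to \infty$ rules out $s(W)<s(V)$ (the expression would tend to $0$, violating the lower bound) and $s(W)>s(V)$ (it would tend to $\infty$, violating the upper bound), forcing $s(V)=s(W)$. The main obstacle is the identification $V_+ = V \cap W_+$, which crucially requires Proposition~\ref{prop:winWplus}(1); once this is in hand, the remainder is clean index bookkeeping powered by \TBt.
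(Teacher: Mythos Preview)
Your proof is correct, and it shares the same opening move as the paper's: reduce to the nested case $V\leq W$ via Proposition~\ref{prop:intersection}, invoke Lemma~\ref{lem:indexreduced} to pass to $[\alpha(W_+):W_+]$, and set up the two chains $V_+\leq \alpha^n(V_+)\leq \alpha^n(W_+)$ and $V_+\leq W_+\leq \alpha^n(W_+)$. The divergence is in how the final equality is extracted. The paper works only at level $n=1$ and proves that the natural surjection $W_+/V_+\to \alpha(W_+)/\alpha(V_+)$ is in fact \emph{injective}: if $\alpha(w)\in\alpha(V_+)$, one finds $v\in V_+$ with $\alpha(v)=\alpha(w)$, and then $wv^{-1}\in\ker\alpha$ has a bounded regressive sequence, so Corollary~\ref{cor:bounded_ in_LV} forces $wv^{-1}\in L_V\leq V_+$. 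This gives $[\alpha(W_+):\alpha(V_+)]=[W_+:V_+]$ exactly, and cancellation finishes.

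You instead use only surjectivity of that map, but for every $n$, obtaining $[\alpha^n(W_+):\alpha^n(V_+)]\leq k$; you then exploit \TBt\ to rewrite $[\alpha^n(W_+):\alpha^n(V_+)]$ as $k\cdot(s(W)/s(V))^n$ and conclude by letting $n\to\infty$. This trades the kernel argument (Corollary~\ref{cor:bounded_ in_LV}) for a heavier use of \TBt, and yields the result via a growth-rate comparison rather than an exact index identity. The paper's route gives the sharper intermediate statement that $[\alpha^n(W_+):\alpha^n(V_+)]$ is actually equal to $k$ for every $n$; your route shows this only a posteriori. Your explicit identification $V_+=V\cap W_+$ via Proposition~\ref{prop:winWplus}(1) is a nice touch: the paper needs $[W_+:V_+]<\infty$ as well for its cancellation step but leaves this implicit.
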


\begin{proof} 
Since $W^{(1)}$ and $W^{(2)}$ satisfy \TA, it suffices by Lemma
\ref{lem:indexreduced} to show that 
\begin{equation}
\label{eq:compareindex}
\left[ \alpha (W^{(1)}_+) : W^{(1)}_+
\right ] = \left [ \alpha (W^{(2)}_+) : W^{(2)}_+ \right ].
\end{equation} 
Since, by Proposition~\ref{prop:intersection}, $W^{(1)}\cap W^{(2)}$ is tidy, equality may be proved by showing that both
sides of (\ref{eq:compareindex}) equal $\left [ \alpha ((W^{(1)}\cap
W^{(2)})_+) : (W^{(1)}\cap W^{(2)})_+ \right ]$. Hence it suffices to assume that
$W^{(2)} \leq W^{(1)}$.  

 The chain
of inclusions $W^{(2)}_+ \leq W^{(1)}_+ \leq \alpha
(W^{(1)}_+)$ implies that
\begin{align*}
\label{eq:firstchain}
\left[ \alpha (W^{(1)}_+) : W^{(2)}_+ \right] &= \left[ \alpha
(W^{(1)}_+) : W^{(1)}_+ \right] \left[ W^{(1)}_+ : W^{(2)}_+
\right],\\
\intertext{while the chain $W^{(2)}_+ \leq \alpha(W^{(2)}_+) \leq \alpha
(W^{(1)}_+)$ implies that}
\left[ \alpha (W^{(1)}_+) : W^{(2)}_+ \right] &= \left[ \alpha
(W^{(1)}_+) : \alpha (W^{(2)}_+) \right] \left[ \alpha( W^{(2)}_+)
: W^{(2)}_+ \right].
\end{align*}
The proof of (\ref{eq:compareindex}) will be completed by showing that 
$$
\left[ \alpha (W^{(1)}_+) : \alpha(W^{(2)}_+) \right] = \left[  W^{(1)}_+ :W^{(2)}_+ \right],
$$
which is clear when $\alpha$ is an automorphism but requires an additional argument for general endomorphisms. For this, consider the well-defined and onto map  
$$
\phi : W^{(1)}_+/W^{(2)}_+ \to \alpha(W^{(1)}_+)/\alpha(W^{(2)}_+) \text{ given by } \phi(wW^{(2)}_+) = \alpha(w)\alpha(W_+^{(2)}) .
$$ 
If $w\in W_+^{(1)}$ and $\alpha(w)\in \alpha(W_+^{(2)})$, then there is
$v\in W_+^{(2)}$ such that $\alpha(v) = \alpha(w)$. Then $wv^{-1}\in \ker(\alpha)$ and has a bounded $\alpha$-regressive sequence. Hence, by Corollary~\ref{cor:bounded_ in_LV}, $wv^{-1}$ is in $L_{W^{(2)}}$, whence $w$ belongs to 
to $W_+^{(2)}$ and $\phi$ is one-to-one as is required to establish the claim. 
\end{proof}

As discussed at the beginning of this section,  every subgroup that is minimizing for $\alpha$ is tidy. It follows from the Proposition~\ref{prop:scalesame} that, if one tidy subgroup is minimizing, then all are. Hence tidiness characterizes minimizing subgroups. 
\begin{theorem}[The Structure of Minimizing Subgroups]
\label{thm:tidystucture}
Let $\alpha$ be an endomorphism of the totally disconnected, locally compact group ${G}$. Then the compact open subgroup $W$ of $G$ is minimising for $\alpha$ if and only if it is tidy for $\alpha$. 
\endproof
\end{theorem}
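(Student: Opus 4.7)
The plan has two halves, both essentially assembly of results already proved in the preceding sections.

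For the forward direction (minimising implies tidy), I would use the tidying procedure as a ratchet. Beginning with a minimiser $W$, Step~1 (Proposition~\ref{prop:propertyTA}) furnishes $V = W_{-N}$ with $[\alpha(V):\alpha(V)\cap V] \leq [\alpha(W):\alpha(W)\cap W]$, with strict inequality unless $W$ already satisfies \TA; the minimality of $W$ forces equality, so $W$ is tidy above and one may take $V = W$. Applying Step~3 to this $V = W$, Proposition~\ref{prop:define_W}(\ref{prop:define_W4}) produces a subgroup $W' = \widetilde{W} L_W$ with $[\alpha(W'):\alpha(W')\cap W'] \leq [\alpha(W):\alpha(W)\cap W]$, and with strict inequality unless $L_W \leq W$. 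Again minimality of $W$ forces $L_W \leq W$, which by Proposition~\ref{prop:Vplusplus} is equivalent to $W$ satisfying \TBo{} and \TBt. Hence $W$ is tidy. This is exactly the content of Proposition~\ref{prop:min=>tidy}.

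For the backward direction (tidy implies minimising), the key ingredient is Proposition~\ref{prop:scalesame}, which asserts that all compact open subgroups tidy for $\alpha$ share a common displacement index, call it $d$. The tidying procedure, applied to any compact open subgroup $U$ of $G$, produces a tidy subgroup whose displacement index is at most $[\alpha(U):\alpha(U)\cap U]$. Therefore $d$ is a lower bound for $[\alpha(U):\alpha(U)\cap U]$ over all compact open $U \leq G$, and it is attained (by any tidy subgroup). Consequently $d = s(\alpha)$, and every tidy subgroup attains the minimum.

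The genuine work for this theorem has already been absorbed into Propositions~\ref{prop:scalesame} and~\ref{prop:intersection}, together with the regressive-sequence analysis culminating in Corollary~\ref{cor:bounded_ in_LV}. In particular, the main obstacle—that tidiness is preserved under finite intersection, which is what allows two arbitrary tidy subgroups to be compared via a common refinement in the proof of Proposition~\ref{prop:scalesame}—has already been surmounted. Given those results, the present theorem is a routine conclusion requiring no new ideas, only the bookkeeping above.
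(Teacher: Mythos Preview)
Your proposal is correct and follows essentially the same approach as the paper: the forward direction is exactly Proposition~\ref{prop:min=>tidy}, obtained by observing that each step of the tidying procedure would strictly reduce the displacement index unless the corresponding tidiness condition already holds; the backward direction is the paper's argument following Proposition~\ref{prop:scalesame}, phrased slightly differently (you bound the scale from above by the common displacement of tidy subgroups via the tidying procedure, whereas the paper notes that some minimising subgroup exists and is tidy, whence by Proposition~\ref{prop:scalesame} all tidy subgroups share its displacement), but the content is identical.
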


When $\alpha$ is automorphism, it is clear that $\alpha^n(W)$ is minimizing for $\alpha$, and hence tidy, when $W$ is. It is then an immediate consequence of Proposition~\ref{prop:intersection} that, if $W$ is tidy for $\alpha$, then so are the subgroups $\alpha^m(W)\cap \alpha^n(W)$ for every $m\leq n\in\mathbb{Z}$. Given one tidy subgroup it is thus possible to find many others (unless $W$ is stable under $\alpha$) and it follows in particular that $W_+\cap W_- =\bigcap_{n\in\mathbb{Z}} \alpha^n(W)$ is the intersection of tidy subgroups. The same is not true for endomorphisms in general because $\alpha^n(W)$ need not be compact when $n$ is negative or open when $n$ is positive. We conclude this section with a couple of partial results that build new tidy subgroups.
\begin{proposition}
\label{prop:alphan}
Let $W$ be tidy for $\alpha\in \End{G}$. Define compact, open subgroups $W_{[n]}$, $n\in\mathbb{N}$, recursively by setting $W_{[0]} = W$ and
$$
W_{[n+1]} = \left\{ x\in W_{[n]}\mid x\alpha^n(W_+) \subset \alpha^n(W_+)W_{[n]}\right\} 
$$
\ and set $W^{[\alpha,n]} = \alpha^n(W_+)W_{[n]}$. Then $W_{[n]}$ and $W^{[\alpha,n]}$
are tidy for $\alpha$ and $\alpha^n(W_+)$ is contained in $W^{[\alpha,n]}$ for each $n\in\mathbb{N}$.  
\end{proposition}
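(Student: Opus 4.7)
The plan is to proceed by induction on $n$. The base case $n = 0$ is immediate: $W_{[0]} = W$ is tidy by hypothesis, $W^{[\alpha,0]} = W_+ \cdot W = W$ is tidy, and $\alpha^0(W_+) = W_+ \subseteq W$.

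For the inductive step, suppose $W_{[n]}$ is tidy for $\alpha$. I would first apply Lemma~\ref{lem:commute_groups} with $K = W_{[n]}$ (compact and open) and $L = \alpha^n(W_+)$ (compact, as the continuous image of the closed subgroup $W_+$ of compact $W$); the finite-index hypothesis holds because $W_{[n]}$ is open, so $W_{[n]} \cap \alpha^n(W_+)$ is open in $\alpha^n(W_+)$. The lemma yields that $W_{[n+1]}$ is an open, finite-index subgroup of $W_{[n]}$ satisfying $W_{[n+1]}\alpha^n(W_+) = \alpha^n(W_+)W_{[n+1]}$, so $\alpha^n(W_+)W_{[n+1]}$ is a compact open subgroup of $G$; a coset-representative argument then identifies this set with $W^{[\alpha,n]} = \alpha^n(W_+)W_{[n]}$, and $\alpha^n(W_+) \subseteq W^{[\alpha,n]}$ is immediate.

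Next, the bulk of the work is to show $W^{[\alpha,n]}$ is tidy, following the pattern of Proposition~\ref{prop:define_W}. The role of $L_V$ in that argument is played here by $\alpha^n(W_+)$, with the key difference that $\alpha^n(W_+)$ is $\alpha$-expanded (since $\alpha(\alpha^n(W_+)) = \alpha^{n+1}(W_+) \supseteq \alpha^n(W_+)$) rather than $\alpha$-stable, placing it entirely on the ``plus'' side. I would establish $(W^{[\alpha,n]})_+ = \alpha^n(W_+)(W_{[n+1]})_+$ and $(W^{[\alpha,n]})_- = (W_{[n+1]})_-$, from which \TA\ follows; then tidiness below by noting $(W^{[\alpha,n]})_{++} \subseteq W_{++}$, which is closed by \TBo\ for $W$ (inherited through tidy $W_{[n]}$), and the index sequence stabilises at the value already reached by $W_+$. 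Tidiness of $W_{[n+1]}$ is established in parallel to Lemma~\ref{lem:V'TA}: \TA\ restricts to $W_{[n+1]}$ via the commuting relation, and \TBo, \TBt\ pass from the larger tidy subgroup $W^{[\alpha,n]}$ through the finite-index inclusion $W_{[n+1]} \leq W_{[n]}$.

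The principal obstacle will be the careful verification of tidiness above for $W^{[\alpha,n]}$, analogous to Lemma~\ref{lem:V'TA}: one must factor an arbitrary element $uw \in \alpha^n(W_+)W_{[n+1]}$ by decomposing $w = w_+w_-$ via \TA\ for $W_{[n+1]}$ and show that both factors lie in the appropriate $\pm$ parts of $W^{[\alpha,n]}$, using the commuting relation together with the containment $\alpha^n(W_+) \subseteq W_{++}$. A secondary obstacle is ensuring the coset-representative argument identifying $\alpha^n(W_+)W_{[n]}$ with $\alpha^n(W_+)W_{[n+1]}$ goes through; this may require invoking the inductive tidiness of $W_{[n]}$ more subtly, since generically the product of a closed subgroup with a compact one need not be a group without such a commutation.
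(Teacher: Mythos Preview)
Your inductive framework is reasonable, but you miss the observation that drives the paper's much shorter argument, and your ``coset-representative argument'' is a real gap that you correctly sense but do not close.

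The paper's proof hinges on noting that $W_+ \leq W_{[1]}$: since $W_+ \leq \alpha(W_+)$, every $x\in W_+$ automatically satisfies the commutation condition. With that in hand, \TA\ for $W_{[1]}$ is nearly free: from $W_+\leq W_{[1]}\leq W$ and \TA\ for $W$ one has $W_{[1]} = W_+(W_-\cap W_{[1]})$, and a direct two-line computation (writing $\alpha(x)l = \alpha(xl')$ for $l=\alpha(l')$ and factoring $xl'$ via \TA\ for $W$) shows that $W_-\cap W_{[1]}$ is $\alpha$-invariant, hence contained in $(W_{[1]})_-$. Tidiness below is then immediate because $W_{[1]}\supseteq W_+\cap W_- = L_W$. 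For $W^{[\alpha,1]}$, the paper simply observes that it equals $\alpha(W_+)(W_-\cap W_{[1]})$, from which both \TA\ and \TB\ follow at once. The induction then proceeds by replacing $W$ with $W^{[\alpha,n]}$ and using $(W^{[\alpha,n]})_+ = \alpha^n(W_+)$, rather than by inducting directly on the tidiness of $W_{[n]}$ as you propose.

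Your detour through the machinery of Proposition~\ref{prop:define_W} is more elaborate than necessary, and the step where you assert that $\alpha^n(W_+)W_{[n+1]} = \alpha^n(W_+)W_{[n]}$ via coset representatives does not follow: Lemma~\ref{lem:commute_groups} yields only that $W_{[n+1]}$ commutes with $\alpha^n(W_+)$, and there is no general reason the larger product $\alpha^n(W_+)W_{[n]}$ should coincide with the smaller one, or even be a group. The ingredient that repairs this is precisely the containment $W_+\leq W_{[n+1]}$, which forces $(W_{[n]})_+ = W_+$ and lets one write $W^{[\alpha,n]}$ directly as the product of an $\alpha$-expanded and an $\alpha$-shrunk part, bypassing the identification you attempt.
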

\begin{proof}
The proof is by induction on $n$. All claims clearly hold when $n=0$, for $W_{[0]} = W = W^{[\alpha,0]}$. It will be seen that the inductive step is equivalent to the $n=1$ case. 

Lemma~\ref{lem:commute_groups} implies that $W_{[1]}$ is an open subgroup of $W$, and it is clear that $W_+\leq W_{[1]}$. Then, since $W$ is tidy above, 
$$
W_{[1]} = W_+(W_-\cap W_{[1]})
$$
It will be shown that $W_-\cap W_{[1]}$ is invariant under $\alpha$, from which it follows that $W_-\cap W_{[1]} \leq W_{[1]-}$ and thence that $W_{[1]}$ is tidy above. For this, let $x\in W_-\cap W_{[1]}$ and $l\in \alpha(W_+)$. Then 
\begin{eqnarray*}
\alpha(x)l &=& \alpha(xl'), \text{where } l'\in W_+,\\
&=& \alpha(l''x'), \text{ with }x'\in W_-,\ l''\in W_+\text{ because }W\text{ is tidy above,}\\
&=& \alpha(l'')\alpha(x') \in \alpha(W_+)W_-,
\end{eqnarray*} 
which shows that $\alpha(x)$ is in $W_-\cap W_{[1]}$. Property \TB\ holds for $W_{[1]}$ because it contains $L_W = W_+\cap W_-$.
Hence $W_{[1]}$ is tidy for $\alpha$. Appealing to Lemma~\ref{lem:commute_groups} again, $W^{[\alpha,1]} = \alpha(W_+)W_{[1]}$ is a compact, open subgroup of $G$, and $W^{[\alpha,1]}$ is tidy for~$\alpha$ because, by construction, it is equal to $\alpha(W_+)(W_-\cap W_{[1]})$ and contains $L_W$. 

The induction proceeds by arguing with $W^{[\alpha,n]}$ in place of $W$ and noting that $W^{[\alpha,n]}_+ = \alpha^n(W_+)$.
\end{proof}

It is immediate from the definition that $\{W_{[n]}\}_{n\in\mathbb{N}}$ is a non-increasing sequence of tidy subgroups of $W$. The sequence need not be decreasing however because, for example, if $G$ is abelian then it is constant. However, tidiness of certain proper subgroups of $W$ that contain $W_+$ may be established with the aid of the following lemma.

\begin{lemma}
\label{lem:commute_compact_groups}
Let $K$ and $L$ be compact subgroups of $G$ and suppose that $KL = LK$. Let $K'$ be an open subgroup of $K$ and put
$$
\hat{K} = \left\{ x\in K' \mid xL\subseteq LK'\right\}.
$$
Then $\hat{K}$ is an open subgroup of $K'$ and $\hat{K}L = L\hat{K}$. 
\end{lemma}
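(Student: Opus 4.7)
The plan is to follow the structure of Lemma~\ref{lem:commute_groups}, carrying over the subgroup arguments verbatim, but to supply a fresh topological ingredient for the openness assertion, since the earlier argument exploited a finite-index hypothesis that is absent here. First I would verify that $\hat K$ is a sub-semigroup containing $e$, and that the identity
$$
\hat K = K' \cap \bigcap_{l\in L} LK'\,l^{-1}
$$
shows $\hat K$ is closed in $K'$: indeed $LK'$ is the continuous image of the compact space $L\times K'$, hence compact and therefore closed in $G$. Being a closed sub-semigroup of the compact group $K'$, $\hat K$ will then automatically be a subgroup.

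The main obstacle will be openness. The proof of Lemma~\ref{lem:commute_groups} used $[L:L\cap K]<\infty$ to realise $\bigcap_{l\in L}lKl^{-1}$ as a finite intersection of open subgroups; no such reduction is available here. My crux step is to show that $LK'$ is open in the compact subgroup $H:=LK=KL$. To this end I would consider the quotient map $q:H\to L\backslash H$ and the continuous map $p:K\to L\backslash H$, $p(k)=Lk$, obtained by composing the inclusion $K\hookrightarrow H$ with $q$. Since $H=LK$, the map $p$ is surjective, and its fibres are precisely the right cosets of $L\cap K$ in $K$. The induced continuous bijection $(L\cap K)\backslash K\to L\backslash H$ between compact Hausdorff spaces is therefore a homeomorphism, and since the quotient map $K\to(L\cap K)\backslash K$ is open, so is $p$. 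Hence $p(K')$ is open in $L\backslash H$, and $LK'=q^{-1}(p(K'))$ is open in $H$, as required.

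With $LK'$ open, the openness of $\hat K$ will follow by a tube-lemma argument. The multiplication $\mu:K'\times L\to H$, $\mu(x,l)=xl$, is continuous, so $\mu^{-1}(LK')$ is open in $K'\times L$. Unwinding the definition gives
$$
\hat K=\bigl\{x\in K'\,:\,\{x\}\times L\subseteq \mu^{-1}(LK')\bigr\},
$$
and the compactness of $L$ together with the tube lemma will imply that this set is open in $K'$.

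Finally, I would establish the equality $\hat K L=L\hat K$ exactly as in Lemma~\ref{lem:commute_groups}. Given $x\in\hat K$ and $l\in L$, write $xl=l'k'$ with $l'\in L$ and $k'\in K'$; for any $l_1\in L$,
$$
k'l_1=(l')^{-1}x(ll_1)\in(l')^{-1}LK'=LK',
$$
showing $k'\in\hat K$ and hence $\hat K L\subseteq L\hat K$. The reverse inclusion comes for free by applying the inverse map, under which both sides are preserved because $\hat K$ and $L$ are groups.
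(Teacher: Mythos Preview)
Your proof is correct. The verification that $\hat K$ is a closed sub-semigroup of $K'$ (hence a subgroup, by compactness) and the argument for $\hat K L = L\hat K$ are the same as the paper's, which simply refers back to Lemma~\ref{lem:commute_groups}.

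The openness argument, however, is genuinely different. The paper exploits the ambient totally disconnected structure of $G$: since $L$ is compact it normalises arbitrarily small compact open subgroups of $G$, so one picks such a $V$ with $V\cap K\le K'$ and asserts that $V\cap K\subseteq\hat K$. Your route is intrinsic to the compact group $H=KL$: you identify $L\backslash H$ with $(L\cap K)\backslash K$ via a continuous bijection of compact Hausdorff spaces, deduce that $p:K\to L\backslash H$ is open, conclude that $LK'=q^{-1}(p(K'))$ is open in $H$, and finish with the tube lemma. This buys you extra generality---your argument uses nothing about $G$ beyond Hausdorffness, so it works for compact subgroups of an arbitrary topological group---whereas the paper's argument leans on van Dantzig's theorem. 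It is also worth noting that the paper's displayed step ``$xl = lx'$ where $x'\in V\cap K$'' reads literally as $x' = l^{-1}xl$, which lies in $V$ because $L$ normalises $V$ but need not lie in $K$ since $L$ is not assumed to normalise $K$; your approach sidesteps this delicacy entirely.
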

\begin{proof}
That $\hat{K}$ is a closed semigroup and therefore a subgroup of the compact group $K'$ follows exactly as in the proof of Lemma~\ref{lem:commute_groups}, as does the proof that $\hat{K}L = L\hat{K}$. 

To see that $\hat{K}$ is open in $K'$, note that $L$ normalises small open subgroups of $G$ because it is compact. Hence there is a compact, open subgroup, $V$, of $G$ that is normalised by $L$ and such that $V\cap K\leq K'$. Then, for $x\in V\cap K$ and $l\in L$, $xl = lx'$ where $x'\in V\cap K$. Therefore the open subgroup $V\cap K$ is contained in $\hat{K}$ and $\hat{K}$ is open.  
\end{proof}

\begin{proposition}
\label{prop:tidy_subgroups}
Let $W$ be a compact, open subgroup of $G$ that is tidy for $\alpha$ in $\End{G}$. Then:
\begin{enumerate}
\item $W_{-n}$ is tidy for every $n\in\mathbb{N}$; and 
\label{prop:tidy_subgroups1}
\item for every open subgroup, $V$, of $W_-$ that contains $W_+\cap W_-$, there is an open subgroup, $\hat{W}$, of $W$ that is tidy for $\alpha$, contains $W_+$ and satisfies $\hat{W}\cap W_- \leq V$.
\label{prop:tidy_subgroups2}
\end{enumerate}
\end{proposition}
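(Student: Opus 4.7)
The plan is to handle~(\ref{prop:tidy_subgroups1}) via the equivalent formulation of tidiness below in Proposition~\ref{prop:Vplusplus}(\ref{Vplusplus2}), and to handle~(\ref{prop:tidy_subgroups2}) by constructing $\hat W$ in the form $W_+\hat W_-$ where $\hat W_-$ is an open subgroup of $V$ engineered to be $\alpha$-forward-stable and to commute with $W_+$ in the sense $\hat W_-W_+=W_+\hat W_-$.

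For~(\ref{prop:tidy_subgroups1}), tidiness above of $W_{-n}$ is quick: given $x\in W_{-n}$, write $x=w_+w_-$ using tidiness above of $W$; since $W_-\leq W_{-n}$ we have $w_-\in W_{-n}$, so $w_+\in W_+\cap W_{-n}=(W_{-n})_+$ by Lemma~\ref{lem:Uminus}(\ref{eq:Uminus2}), while $(W_{-n})_-=W_-$ by Lemma~\ref{lem:L_U=L_V}(\ref{lem:L_U=L_V2}). For tidiness below, I verify condition~(\ref{Vplusplus2}) of Proposition~\ref{prop:Vplusplus} for $W_{-n}$: if $\alpha^j(v),\alpha^k(v)\in W_{-n}$ with $j\leq k$, then $\alpha^s(v)\in W$ for every $s\in\{j,\ldots,j+n\}\cup\{k,\ldots,k+n\}$, and the same property applied to the tidy $W$ forces $\alpha^s(v)\in W$ for all $s\in[j,k+n]$; in particular $\alpha^i(v)\in W_{-n}$ for each $i\in[j,k]$.

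For~(\ref{prop:tidy_subgroups2}), the first step is to establish $\bigcap_{n\geq0}\alpha^n(W_-)=L_W$. The inclusion $\supseteq$ follows from $\alpha(L_W)=L_W$, while a Mittag--Leffler argument applied to the compact nonempty sets of $\alpha$-regressive sequences of length $m$ in $W_-$ ending at a point $c$ of the intersection produces an infinite $\alpha$-regressive sequence for $c$ in $W_-\leq W$, placing $c$ in $W_+\cap W_-=L_W$. Since $V$ is open in the compact group $W_-$ and $L_W\leq V$, compactness yields an $N$ with $\alpha^N(W_-)\leq V$, and then
\[
V^*:=V\cap\alpha^{-1}(V)\cap\cdots\cap\alpha^{-(N-1)}(V)
\]
(preimages taken in $W_-$) is an open subgroup of $W_-$ that contains $L_W$ and is $\alpha$-forward-stable---for $v\in V^*$, $\alpha^{j+1}(v)\in V$ holds for $0\leq j\leq N-2$ by construction and for $j=N-1$ because $\alpha^N(v)\in\alpha^N(W_-)\leq V$.

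Applying Lemma~\ref{lem:commute_compact_groups} with $K=W_-$, $K'=V^*$, $L=W_+$ (the hypothesis $KL=LK$ holds because $W=W_+W_-=W_-W_+$) produces an open subgroup $\hat W_-:=\{v\in V^*:vW_+\subseteq W_+V^*\}$ of $V^*$ with $\hat W_-W_+=W_+\hat W_-$, and $L_W\leq\hat W_-$ since $L_WW_+=W_+\subseteq W_+V^*$. The crucial point is that $\hat W_-$ remains $\alpha$-forward-stable: applying $\alpha$ to $vW_+\subseteq W_+V^*$ and using $\alpha(V^*)\leq V^*$ gives $\alpha(v)\alpha(W_+)\subseteq\alpha(W_+)V^*$, and intersecting with $W$ together with the identity $\alpha(W_+)\cap W=W_+$ from Lemma~\ref{lem:Uplus} yields $\alpha(W_+)V^*\cap W=W_+V^*$, whence $\alpha(v)W_+\subseteq W_+V^*$. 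Setting $\hat W:=W_+\hat W_-=\hat W_-W_+$ then gives a compact subgroup of $W$ that is open (its right cosets $\hat W g_i$, for coset representatives $g_i$ of $\hat W_-$ in $W_-$, cover $W=W_+W_-$, so $\hat W$ has finite index) and satisfies $W_+\leq\hat W$ and $\hat W\cap W_-=\hat W_-\leq V$, the equality using $L_W\leq\hat W_-$. Tidiness above of $\hat W$ is immediate from the product decomposition, while tidiness below is inherited from $W$ because $\hat W_+=W_+$, so $\hat W_{++}=W_{++}$ and $[\alpha^{j+1}(\hat W_+):\alpha^j(\hat W_+)]$ equals the corresponding index for $W$. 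The principal obstacle is obtaining the simultaneous $\alpha$-stability and commutation $\hat W_-W_+=W_+\hat W_-$ in one step; the ingredients that make this possible are the identities $\bigcap_n\alpha^n(W_-)=L_W$ and $\alpha(W_+)\cap W=W_+$.
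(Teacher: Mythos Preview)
Your proof is correct; the overall architecture in part~(\ref{prop:tidy_subgroups2}) matches the paper's, but several sub-steps are handled by genuinely different means.

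For part~(\ref{prop:tidy_subgroups1}) the paper gives a one-line argument: since $W$ is tidy above, Lemma~\ref{lem:stable} gives $[W_{-n}:W_{-n-1}]=[W:W_{-1}]$, so $W_{-n}$ is minimizing and hence tidy by Theorem~\ref{thm:tidystucture}. Your direct verification of \TA\ and of criterion~(\ref{Vplusplus2}) in Proposition~\ref{prop:Vplusplus} is longer but self-contained and does not invoke the equivalence of tidiness with minimizing.

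For part~(\ref{prop:tidy_subgroups2}) both proofs proceed by first passing to an open $\alpha$-forward-stable subgroup of $V$ containing $W_+\cap W_-$, then applying Lemma~\ref{lem:commute_compact_groups} to obtain a subgroup commuting with $W_+$, checking this subgroup remains forward-stable, and forming $\hat W$ as the product with $W_+$. The differences are: the paper obtains forward-stability by applying Proposition~\ref{prop:propertyTA} to $\alpha|_{W_-}$ (so the shrunken $V$ is tidy above for $\alpha|_{W_-}$, hence equals its own $V_-$), whereas you prove the identity $\bigcap_{n\geq0}\alpha^n(W_-)=L_W$ and use compactness to find $N$ with $\alpha^N(W_-)\leq V$, then intersect preimages. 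Your identity is a pleasant by-product not stated in the paper. For the persistence of forward-stability after applying Lemma~\ref{lem:commute_compact_groups}, the paper argues element-by-element (tracking a specific $w\in W_+$ through the factorisation), while you use the set identity $\alpha(W_+)V^*\cap W=W_+V^*$ derived from $\alpha(W_+)\cap W=W_+$; both arguments are short and either could replace the other.
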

\begin{proof}
(\ref{prop:tidy_subgroups1}) Since, by Lemma~\ref{lem:stable}, $[W_{-n} : W_{-n-1}] = [W : W_{-1}] = [\alpha(W) : \alpha(W)\cap W]$ for each $n$ when $W$ is tidy above for $\alpha$, $W_{-n}$ is minimizing, and hence tidy, for every $n\in\mathbb{N}$.

(\ref{prop:tidy_subgroups2}) It will be important later to know that $\alpha(V)\leq V$, which may be achieved by passing to a subgroup if necessary for, by Proposition~\ref{prop:propertyTA}, $V$ may be replaced by an open subgroup that is tidy above for the endomorphism $\alpha|_{W_-}$. Then, since $V_+ = W_+\cap W_-$ is $\alpha$-stable, $V = V_-$ and $\alpha(V)\leq V$ as required. 

Let $\hat{V} = \left\{ x\in V \mid xW_+\subseteq W_+V\right\}$. Then $\hat{V} \geq W_+\cap W_-$ and $\hat{V}$ is an open subgroup of $V$ such that $\hat{V}W_+ = W_+\hat{V}$, by Lemma~\ref{lem:commute_compact_groups}. Put $\hat{W} = W_+\hat{V}$. Then $\hat{W}$ is a closed subgroup of $W$ that has finite index and is therefore open. Moreover, $\hat{W}$ contains $W_+$ and $\hat{W}\cap W_- = \hat{V}\leq V$. 

By construction, $\hat{W} = W_+\hat{V}$ and $\hat{W}$ contains $W_+\cap W_- = L_W$. Since $W_+ = \hat{W}_+$ and $L_{\hat{W}} \leq L_W$ (in fact, they are equal), it follows from Proposition~\ref{prop:Vplusplus} that $\hat{W}$ is tidy for $\alpha$ provided that $\hat{V}\leq \hat{W}_-$. This may be established by recalling that it has been arranged that $\alpha(V)\leq V$ because, letting $x\in \hat{V}$ and $w\in W_+$, we have
\begin{eqnarray*}
\alpha(x)w &=& \alpha(xw'), \text{ for some }w'\in W_+\cap W_{-1},\\
&=& \alpha(w''x'), \text{ with }w''\in W_+\text{ and }x'\in V,\text{ because }x\in \hat{V}.
\end{eqnarray*}
Since $\alpha(x)w$ and $w''x'$ are in $W$, $w''x'$  belongs to $W_{-1}$ and so in fact $w''$ belongs to $W_+\cap W_{-1}$. Hence $\alpha(x)w$ is
in $W_+\alpha(V) \leq W_+V$,
and it has been shown that $\alpha(x)$ belongs to $\hat{V}$. Therefore $\alpha(\hat{V})\leq \hat{V}$ and it follows that $\hat{V}$ is contained in $\hat{W}_-$. 
\end{proof}
\begin{corollary}
\label{cor:tidy_subgroups}
Let $W$ be tidy for the endomorphism $\alpha$ of $G$. Then 
$$
W_+\cap W_- = \bigcap\left\{ \hat{W} \mid W_+\cap W_-\leq \hat{W} \leq W\text{ and }\hat{W}\text{ is tidy for }\alpha\right\}.
$$
\end{corollary}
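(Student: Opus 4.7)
The inclusion $W_+\cap W_- \subseteq \bigcap \hat{W}$ is immediate, since each $\hat{W}$ in the family contains $W_+\cap W_-$ by hypothesis. My plan is to prove the reverse inclusion by producing, for each $x\in W$ with $x\notin W_+\cap W_-$, a tidy subgroup $\hat{W}$ of $W$ containing $W_+\cap W_-$ but missing $x$. Observe that the hypothesis $x\notin W_+\cap W_-$ splits into two cases: either $x\notin W_-$, or $x\in W_-\setminus W_+$.

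In the first case, since $W_-=\bigcap_{n\geq 0}W_{-n}$, there exists $n$ with $x\notin W_{-n}$. Proposition~\ref{prop:tidy_subgroups}(\ref{prop:tidy_subgroups1}) guarantees that $W_{-n}$ is tidy for $\alpha$, and clearly $W_+\cap W_-\leq W_-\leq W_{-n}\leq W$, so $\hat{W}=W_{-n}$ witnesses the desired exclusion.

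The second case is the main obstacle and is handled using Proposition~\ref{prop:tidy_subgroups}(\ref{prop:tidy_subgroups2}). First I would invoke the standard fact that in a compact totally disconnected group, every closed subgroup is the intersection of the open subgroups containing it (proved by van Dantzig's theorem together with the observation that an open subgroup can be conjugated by a compact group into a sub-intersection that is normalised, producing an open subgroup which contains a given closed subgroup but avoids any prescribed exterior point). Applied to the compact totally disconnected group $W_-$ and its closed subgroup $W_+\cap W_-$, this yields an open subgroup $V$ of $W_-$ with $W_+\cap W_-\leq V$ and $x\notin V$. Now Proposition~\ref{prop:tidy_subgroups}(\ref{prop:tidy_subgroups2}) supplies an open subgroup $\hat{W}$ of $W$ that is tidy for $\alpha$, contains $W_+$ (and therefore $W_+\cap W_-$), and satisfies $\hat{W}\cap W_-\leq V$. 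If $x$ were in $\hat{W}$, then $x\in \hat{W}\cap W_-\leq V$ would contradict $x\notin V$; hence $x\notin \hat{W}$, completing the second case.

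Combining the two cases, every element of $W\setminus(W_+\cap W_-)$ is excluded from some tidy subgroup in the family, so the intersection equals $W_+\cap W_-$, as claimed. The only non-routine ingredient beyond the propositions already established in the paper is the topological separation step at the start of the second case, whose brief justification I will include in the write-up.
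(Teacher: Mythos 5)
Your argument is correct and follows the same route as the paper's (one-line) proof: part (1) of Proposition~\ref{prop:tidy_subgroups} forces the intersection into $W_-=\bigcap_n W_{-n}$, and part (2), combined with the standard fact that a closed subgroup of a compact totally disconnected group is the intersection of the open subgroups containing it, cuts the intersection down to $W_+\cap W_-$. Your write-up is in fact more careful than the paper's, which leaves the separation step for the second case implicit.
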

\begin{proof}
From Proposition~\ref{prop:tidy_subgroups}(\ref{prop:tidy_subgroups1}) it follows that $W_-$ is contained in the intersection, while from Proposition~\ref{prop:tidy_subgroups}(\ref{prop:tidy_subgroups2}) it follows that $W_+$ is. 
\end{proof}

\section{Properties of the scale function on endomorphisms}
\label{sec:Moller}

The scale function $\alpha\mapsto s(\alpha) : \End{G} \to \mathbb{Z}^+$ defined here extends the function defined on automorphisms in~\cite{wi:further} (and that defined on inner automorphisms in~\cite{Wi:structure}). The scale is shown in~\cite{wi:further} to have certain algebraic properties as a function on the automorphism group of $G$ and these properties are shown in this section to extend to the endomorphism semigroup of $G$ as much as can be expected. (The only properties that do not extend are the characterization of $\alpha$-stable subgroups as those for which $s(\alpha) = 1 = s(\alpha^{-1})$ and, more generally, that $s(\alpha)/s(\alpha^{-1}) = \Delta(\alpha)$, the module of the automorphism $\alpha$.) 
\begin{proposition}
\label{prop:powers}
Let the compact open subgroup $W\leq G$ be tidy for the endomorphism $\alpha$. Then $W$ is tidy for $\alpha^k$ for every $k\in\mathbb{N}$. The converse is false even when $G$ is finite and $\alpha$ is an automorphism. 
\end{proposition}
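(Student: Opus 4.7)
The plan is to show that when $W$ is tidy for $\alpha$, the subgroups $W_+$ and $W_-$ are unchanged when $\alpha$ is replaced by $\alpha^k$; from this it will follow mechanically that each of \TA, \TBo\ and \TBt\ transfers to $\alpha^k$. For the converse, I will exhibit a small order-two automorphism of a finite group which is trivially tidy when squared but non-tidy otherwise.

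The key step is the identity $W_\pm = W_\pm^{(k)}$, where the superscript $(k)$ indicates the subgroup computed using $\alpha^k$. For $W_-$ the inclusion $W_-\subseteq W_-^{(k)}$ is immediate; conversely, if $x\in W_-^{(k)}$ then $\alpha^{kn}(x)\in W$ for every $n\in\mathbb{N}$, and Proposition~\ref{prop:Vplusplus}(\ref{Vplusplus2}) applied successively on each interval $[kn,k(n+1)]$ forces $\alpha^j(x)\in W$ for every $j\geq 0$, whence $x\in W_-$. For $W_+$ the inclusion $W_+\subseteq W_+^{(k)}$ holds by passing to every $k$-th term of an $\alpha$-regressive sequence. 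Conversely, given an $\alpha^k$-regressive sequence $\{x_n\}\subset W$ for $x$, each $x_{n+1}$ satisfies $x_{n+1}\in W$ and $\alpha^k(x_{n+1})=x_n\in W$, so Proposition~\ref{prop:Vplusplus}(\ref{Vplusplus2}) places $\alpha^j(x_{n+1})\in W$ for all $0\leq j\leq k$, and concatenating these gives an $\alpha$-regressive sequence for $x$ in $W$.

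With these identities in hand, \TA\ for $\alpha^k$ follows immediately from $W=W_+W_-=W_+^{(k)}W_-^{(k)}$. For \TBo, the subsequence $\{\alpha^{kn}(W_+)\}_{n\geq 0}$ is cofinal in the non-decreasing sequence $\{\alpha^n(W_+)\}_{n\geq 0}$, so $\bigcup_{n\geq 0}\alpha^{kn}(W_+^{(k)})=W_{++}$, which is closed by hypothesis. For \TBt, if $c$ denotes the constant value of $[\alpha^{n+1}(W_+):\alpha^n(W_+)]$, then multiplicativity of the index along the chain $\alpha^{kn}(W_+)\leq\alpha^{kn+1}(W_+)\leq\cdots\leq\alpha^{k(n+1)}(W_+)$ gives $[\alpha^{k(n+1)}(W_+^{(k)}):\alpha^{kn}(W_+^{(k)})]=c^k$, independent of $n$.

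For the converse, take $G=S_3$ and let $\alpha$ be the inner automorphism induced by conjugation with $(1\,2)$. Then $\alpha^2$ is the identity, so every subgroup of $G$ is tidy for $\alpha^2$. The subgroup $W=\{\ident,(1\,3)\}$ however satisfies $\alpha(W)=\{\ident,(2\,3)\}\neq W$, so $[\alpha(W):\alpha(W)\cap W]=2>1=s(\alpha)$ (the scale being attained by $G$ itself), and hence $W$ is not minimising, therefore not tidy, for $\alpha$. The only real obstacle is conceptual, namely recognising that Proposition~\ref{prop:Vplusplus}(\ref{Vplusplus2}) supplies exactly the interpolation principle needed to align $\alpha$-regressive sequences with $\alpha^k$-regressive sequences; once that step is in place the verification of \TA, \TBo\ and \TBt\ is routine.
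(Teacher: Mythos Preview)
Your proof is correct and follows essentially the same strategy as the paper: both arguments establish $W_\pm^{(k)}=W_\pm$ via the interpolation principle of Proposition~\ref{prop:Vplusplus}(\ref{Vplusplus2}), and both give a finite counterexample with an involution (the paper uses the swap on $C_p^2$, you use conjugation by a transposition in $S_3$). The only cosmetic difference is that the paper concludes tidiness below by showing $\mathcal{L}_{\alpha^k,W}\subseteq\mathcal{L}_{\alpha,W}\subseteq W$ and invoking Proposition~\ref{prop:Vplusplus}(\ref{Vplusplus1}), whereas you verify \TBo\ and \TBt\ directly from $W_+^{(k)}=W_+$.
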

\begin{proof}
The subgroups $W_n$, for $n\in\mathbb{N}$, are defined recursively in Definition~\ref{defn:Uplus} by $W_0 = W$ and $W_{n+1} = W\cap \alpha(W_n)$. Temporarily denoting these subgroups by $W_{\alpha,n}$ and the corresponding subgroups defined using $\alpha^k$ by $W_{\alpha^k,n}$,  it may be seen by induction that $W_{\alpha^k,n} \geq W_{\alpha,kn}$. Hence, extending the temporary notation, $W_{\alpha^k,+} \geq W_{\alpha,+}$. Similarly, and referring to Definition~\ref{defn:Uminus}, $W_{\alpha^k,-} \geq W_{\alpha,-}$.  Hence, since $W$ is tidy above for $\alpha$, it is tidy above for $\alpha^k$. 

Next, consider $w\in W_{\alpha^k,+}$. There is a sequence $\{w_{kn}\}_{n\in\mathbb{N}}\subset W$ that is $\alpha^k$-regressive for $w$, that is, $w_0 = w$ and $\alpha^k(w_{k(n+1)}) = w_{kn}$ for each $n$. Embed this in a sequence $\{w_n\}_{n\in\mathbb{N}}$ by interpolating
$$w_j = \alpha^{k(n+1)-j}(w_{k(n+1)}), \text{ for }kn < j < k(n+1).
$$ 
Then $\{w_n\}_{n\in\mathbb{N}}$ is $\alpha$-regressive for $w$ and, since $W$ is tidy for $\alpha$ and $w_{kn}\in W$ for each $n$, it follows from Proposition~\ref{prop:inV} that $w_n\in W$ for every $n\in\mathbb{N}$. Hence $w\in W_{\alpha,+}$ and it has been shown that $W_{\alpha^k,+} = W_{\alpha,+}$. That $W_{\alpha^k,-} = W_{\alpha,-}$ may be seen by a similar argument. 

Let $w\in \mathcal{L}_{\alpha^k,W}$, so that there are $v\in W_{\alpha^k,+}$ and $m\leq n$ such that $\alpha^m(v) = w$ and $\alpha^n(v)\in W_{\alpha^k,-}$. Then it follows from what has just been shown that $v\in W_{\alpha,+}$ and $\alpha^n(v) \in  W_{\alpha,-}$,whence $w\in \mathcal{L}_{\alpha,W}$. Since $W$ is tidy for $\alpha$, $\mathcal{L}_{\alpha,W}\leq W$. Hence $w\in W$ and it has been shown that $\mathcal{L}_{\alpha^k,W} \leq W$. Therefore $W$ is tidy for $\alpha^k$.

For an example where the converse fails, let $G$ be the finite group 
$$
C_p^2 = \left\{ (c_1,c_2) \mid c_i\in C_p\right\}
$$ 
and $\alpha$ be the automorphism $\alpha(c_1,c_2) = (c_2,c_1)$. Then the subgroup 
$$
W := \left\{ (\bar{0},c_2) \mid c_2\in C_p\right\}
$$ 
is tidy for $\alpha^2$, which is the identity automorphism, but not for $\alpha$.   
\end{proof}

Properties~{\bf S1} and~{\bf S2} in the next result correspond to the properties of the same name in~\cite{wi:further}. 
\begin{proposition}
\label{prop:scale}
The scale $s : \End{G} \to \mathbb{Z}^+$ satisfies:
\begin{description}
\item[S1] $s(\alpha) = 1$ if and only if there is  a compact, open $U\leq G$ with $\alpha(U)\leq U$;
\label{thm:scale1}
\item[S2] $s(\alpha^n) = s(\alpha)^n$ for every $n\in\mathbb{N}$.
\label{thm:scale2}
\end{description}
\end{proposition}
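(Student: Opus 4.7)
The plan is to prove both items by reducing everything to calculations on a tidy subgroup, using the machinery developed in Sections~\ref{sec:factoring}--\ref{sec:Min_equiv_Tidy} together with Proposition~\ref{prop:powers}.

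For \textbf{S1}, one implication is immediate: if $U$ is a compact, open subgroup with $\alpha(U)\leq U$, then $\alpha(U)\cap U = \alpha(U)$, so $[\alpha(U) : \alpha(U)\cap U] = 1$ and $s(\alpha) = 1$. For the converse, I would invoke Proposition~\ref{prop:min=>tidy} (equivalently Theorem~\ref{thm:tidystucture}) to choose a subgroup $W$ tidy, and hence minimising, for $\alpha$. Then $[\alpha(W):\alpha(W)\cap W] = s(\alpha) = 1$ forces $\alpha(W) = \alpha(W)\cap W$, so $\alpha(W)\leq W$, as desired.

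For \textbf{S2}, I would fix a subgroup $W$ tidy for $\alpha$; by Proposition~\ref{prop:powers}, $W$ is then also tidy for $\alpha^n$, and its proof shows $W_{\alpha^n,+} = W_{\alpha,+}$ (call this common subgroup $W_+$). Since $W$ is tidy, hence minimising, for both $\alpha$ and $\alpha^n$, Lemma~\ref{lem:indexreduced} gives
\[
s(\alpha) = [\alpha(W_+) : W_+] \qquad\text{and}\qquad s(\alpha^n) = [\alpha^n(W_+) : W_+].
\]
The right-hand index factors through the chain $W_+ \leq \alpha(W_+)\leq \alpha^2(W_+)\leq \cdots \leq \alpha^n(W_+)$, giving
\[
[\alpha^n(W_+) : W_+] = \prod_{k=0}^{n-1}\bigl[\alpha^{k+1}(W_+) : \alpha^k(W_+)\bigr].
\]
Finally, property \TBt, which $W$ satisfies by tidiness, asserts that each factor on the right equals $[\alpha(W_+):W_+] = s(\alpha)$, so $s(\alpha^n) = s(\alpha)^n$. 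The boundary case $n=0$ is covered by the convention $\alpha^0 = \mathrm{id}_G$, which has scale $1$.

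There is no single hard step; the argument is essentially an accounting exercise once the right tidy subgroup is in hand. The only subtlety is the appeal to Proposition~\ref{prop:powers} to guarantee that a single subgroup $W$ may simultaneously witness tidiness for $\alpha$ and for all its powers and that the ``plus'' subgroup computed with $\alpha$ coincides with the one computed with $\alpha^n$; without this, the telescoping identity could not be aligned with \TBt\ for $\alpha$. Once this is established, \TBt\ is precisely the statement needed to collapse the product into the $n$-th power.
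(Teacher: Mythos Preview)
Your proof is correct and follows essentially the same approach as the paper: for \textbf{S1} the paper simply says it is immediate from the definition of the scale (your spelling-out of both directions is fine), and for \textbf{S2} the paper likewise fixes a subgroup $W$ tidy for $\alpha$, invokes Proposition~\ref{prop:powers} to get tidiness for $\alpha^n$, and then telescopes $[\alpha^n(W_+):W_+]$ using \TBt. Your explicit mention of $W_{\alpha^n,+} = W_{\alpha,+}$ from the proof of Proposition~\ref{prop:powers} and your citation of Lemma~\ref{lem:indexreduced} make precise what the paper leaves implicit, but the arguments are the same.
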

\begin{proof}
Property~{\bf S1} is immediate from the definition of the scale. 

For~{\bf S2}, let $W$ be tidy for $\alpha$. Then $W$ is tidy for $\alpha^n$ by Proposition~\ref{prop:powers} and $s(\alpha) = [\alpha(W_+) : W_+]$ and 
$$
s(\alpha^n) = [\alpha^n(W_+) : W_+] = \prod_{k=0}^{n-1} [\alpha^{k+1}(W_+) : \alpha^k(W_+)] = [\alpha(W_+) : W_+]^n
$$
because the sequence $\left\{ [\alpha^{k+1}(W_+) : \alpha^k(W_+)]\right\}$ is constant, by the definition of tidiness.
\end{proof}

R.\,M\"oller gave an alternative characterization of the scale in \cite[Theorem~7.7]{Moller}. The same characterization, analogous to a spectral radius formula, holds for endomorphisms.   
\begin{proposition}[M\"oller]
\label{defn:Moller's_formula}
Let $\alpha\in \End{G}$ and let $V$ be a compact, open subgroup of~$G$. Then 
$$
s(\alpha) = \lim_{n\to\infty} [\alpha^n(V) : V\cap \alpha^n(V)]^{\frac{1}{n}}.
$$ 
\end{proposition}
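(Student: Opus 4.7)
The plan is to sandwich $[\alpha^n(V) : V \cap \alpha^n(V)]^{1/n}$ between two sequences that both tend to $s(\alpha)$. First, if $W$ is tidy for $\alpha$ then by Proposition~\ref{prop:powers}, $W$ is also tidy for $\alpha^n$ for each $n\in\mathbb{N}$, and Theorem~\ref{thm:tidystucture} together with Proposition~\ref{prop:scale} give
$$
[\alpha^n(W) : W\cap \alpha^n(W)] = s(\alpha^n) = s(\alpha)^n.
$$
So M\"oller's formula holds on the nose (with no limit needed) for any tidy subgroup. For an arbitrary compact open $V$, the lower bound follows at once from the definition of the scale applied to $\alpha^n$: $s(\alpha)^n = s(\alpha^n) \leq [\alpha^n(V) : V\cap \alpha^n(V)]$, so taking $n$-th roots yields $s(\alpha) \leq [\alpha^n(V) : V\cap \alpha^n(V)]^{1/n}$ for every~$n$.

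For the upper bound, I would fix some $W$ tidy for $\alpha$ (which exists by Proposition~\ref{prop:min=>tidy}) and set $V_1 := V \cap W$; both $c := [V : V_1]$ and $d := [W : V_1]$ are finite. The elementary observation at the core of the argument is that for any subgroups $A \leq B$ of finite index and any endomorphism $\beta$, the canonical surjection $B/A \to \beta(B)/\beta(A)$ gives $[\beta(B) : \beta(A)] \leq [B : A]$; this is the single place where the endomorphism setting, rather than the automorphism setting, intervenes, since the inequality may be strict. Applying this with $\beta = \alpha^n$ and the pair $V_1 \leq V$, and then exploiting multiplicativity of indices along the two chains $V_1 \cap \alpha^n(V_1) \leq V \cap \alpha^n(V) \leq \alpha^n(V)$ and $V_1 \cap \alpha^n(V_1) \leq \alpha^n(V_1) \leq \alpha^n(V)$, one derives
$$
[\alpha^n(V) : V \cap \alpha^n(V)] \leq c \cdot [\alpha^n(V_1) : V_1 \cap \alpha^n(V_1)].
$$
A parallel argument, this time factoring the index $[W \cap \alpha^n(W) : V_1 \cap \alpha^n(V_1)]$ through the intermediate subgroup $V_1 \cap \alpha^n(W)$ to bound it by $d\cdot d = d^2$, yields
$$
[\alpha^n(V_1) : V_1 \cap \alpha^n(V_1)] \leq d^2 \cdot [\alpha^n(W) : W \cap \alpha^n(W)] = d^2 s(\alpha)^n.
$$
Combining gives $[\alpha^n(V) : V \cap \alpha^n(V)] \leq c d^2 s(\alpha)^n$, so taking $n$-th roots produces $\limsup_n [\alpha^n(V) : V \cap \alpha^n(V)]^{1/n} \leq s(\alpha)$, matching the lower bound.

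The main obstacle is producing a multiplicative constant that is independent of $n$ in the upper bound. The key point is that although $\alpha$ may have a nontrivial kernel (so that the index $[\alpha^n(B):\alpha^n(A)]$ can be strictly less than $[B:A]$), this weak inequality nevertheless points in the favourable direction, which is why the constant $cd^2$ arising from the comparison with the tidy $W$ does not grow with $n$. The tidy case itself is essentially automatic from Propositions~\ref{prop:powers} and~\ref{prop:scale}, so the bulk of the work is the bookkeeping of indices described above.
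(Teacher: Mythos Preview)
Your proof is correct and follows essentially the same approach as the paper: both arguments compare the displacement index of an arbitrary $V$ with that of a tidy $W$ via the common subgroup $V\cap W$, using the inequality $[\alpha^n(B):\alpha^n(A)]\leq [B:A]$ and multiplicativity of indices along suitable chains to produce constants independent of $n$. The only minor difference is that you obtain the lower bound directly from $s(\alpha^n)=s(\alpha)^n$ and the definition of the scale, whereas the paper derives both bounds simultaneously from a symmetric sandwich inequality; your route for the lower bound is slightly cleaner but not substantively different.
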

\begin{proof}
For $U$ and $V$ be compact, open subgroups of $G$ with $U\leq V$ we have
\begin{eqnarray*}
[\alpha^n(V) : U\cap \alpha^n(U)] &=& [\alpha^n(V) : \alpha^n(U)][\alpha^n(U) : U\cap \alpha^n(U)] \ \ \text{ and }\\     
{[}\alpha^n(V) : U\cap \alpha^n(U){]} &=& [\alpha^n(V) : V\cap \alpha^n(V)][V\cap \alpha^n(V) : U\cap \alpha^n(U)].
\end{eqnarray*}
Since $\alpha$ is an endomorphism, 
 $[\alpha^n(V) : \alpha^n(U)] \leq [V : U]$ and 
 $$
 [V\cap \alpha^n(V) : U\cap \alpha^n(U)] = [V\cap \alpha^n(V) : U\cap \alpha^n(V)][U\cap \alpha^n(V) : U\cap \alpha^n(U)]
 $$ 
 is at most $[V : U]^2$. Hence, and recalling too that all indices are at least~$1$, 
 $$
[V : U]^{-1}[\alpha^n(V) : V\cap \alpha^n(V)]\leq  [\alpha^n(U) : U\cap \alpha^n(U)]\leq [V : U]^2 [\alpha^n(V) : V\cap \alpha^n(V)].
$$ 
For $V$ and $W$ any two compact, open subgroups, putting $U = V\cap W$ in the above inequalities yields
$$
M_1[\alpha^n(W) : W\cap \alpha^n(W)]\leq  [\alpha^n(V) : V\cap \alpha^n(V)]\leq M_2 [\alpha^n(W) : W\cap \alpha^n(W)],
$$ 
where $M_1 = [V:U]^{-2}[W:U]$ and $M_2 = [V:U][W:U]^2$. 

Choose $W$ to be tidy for $\alpha$. Then $[\alpha^n(W) : W\cap \alpha^n(W)] = s(\alpha)^n$ for every $n\in\mathbb{N}$ and the claimed limit follows. 
\end{proof}

\section{Subgroups of $G$ associated with the endomorphism $\alpha$}
\label{sec:subgroups}

The subgroups $U_{++}$ and $U_{--}$ are defined in terms of a particular subgroup $U$ that is minimizing for the endomorphism $\alpha$. It will be seen next that they are contained in subgroups of $G$ defined directly in terms of $\alpha$, and the restriction of $\alpha$ to these subgroups will be described. These subgroups are familiar when $\alpha$ is an automorphism of a $p$-adic Lie group and are named by extension from that case.
\begin{definition}
\label{defn:para_etc}
Define the \emph{parabolic}, \emph{anti-parabolic} and \emph{Levi} subgroups respectively for the endomorphism $\alpha$ by:
\begin{align*}
\pbp{\alpha} &= \left\{ x\in G \mid \alpha^n(x) \text{ is bounded}\right\};\\
\pbn{\alpha} &= \left\{ x\in G \mid  \text{ there is a bounded $\alpha$-regressive sequence for }x\right\};\\
\text{ and }\quad\ml{\alpha} &= \pbp{\alpha}\cap \pbn{\alpha}.
\end{align*}
\end{definition} 
Note that $U_{--}\leq \pbp{\alpha}$ and $U_{++}\leq \pbn{\alpha}$ for any compact, open subgroup $U$. In particular, $V_{++}$ is contained in $\pbn{\alpha}$ for any subgroup minimizing for $\alpha$ and the scale of the restriction of $\alpha$ to $\pbn{\alpha}$ is equal to the scale of $\alpha$ on $G$. On the other hand, any compact, open subgroup of $\pbp{\alpha}$ that is tidy for $\alpha$ is invariant under $\alpha$ by Proposition~\ref{prop:winWplus}(\ref{lem:winWplus2}) and the scale of the restriction of $\alpha$ to $\pbp{\alpha}$ is~1. 

\begin{proposition}
\label{prop:par_closed}
Let $\alpha\in\End{G}$. Then $\pbp{\alpha}$, $\pbn{\alpha}$ and $\ml{\alpha}$ are closed $\alpha$-invariant subgroups of $G$.
\end{proposition}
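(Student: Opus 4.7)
The plan is to verify the algebraic conditions directly by bare-hands manipulation and to reduce the closedness assertions to Lemma~\ref{lem:bourbaki} after intersecting with a tidy subgroup, using Proposition~\ref{prop:winWplus} as the key bridge between the global orbit conditions defining $\pbp{\alpha}$ and $\pbn{\alpha}$ and the local subgroups $W_\pm$.

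First I would check the subgroup and $\alpha$-invariance claims. If $x,y\in\pbp{\alpha}$, then $\overline{\{\alpha^n(x)\}_{n\in\mathbb{N}}}\cdot\overline{\{\alpha^n(y)^{-1}\}_{n\in\mathbb{N}}}$ is compact and contains $\{\alpha^n(xy^{-1})\}_{n\in\mathbb{N}}$, and $\{\alpha^n(\alpha(x))\}_{n\in\mathbb{N}}$ is a tail of $\{\alpha^n(x)\}_{n\in\mathbb{N}}$; so $\pbp{\alpha}$ is an $\alpha$-invariant subgroup. For $\pbn{\alpha}$, if $\{x_n\}$ and $\{y_n\}$ are bounded $\alpha$-regressive sequences for $x$ and $y$, then $\{x_ny_n^{-1}\}$ is such a sequence for $xy^{-1}$; and prepending $\alpha(x)$ to the sequence $(x,x_1,x_2,\dots)$ produces a bounded $\alpha$-regressive sequence for $\alpha(x)$, so $\pbn{\alpha}$ is an $\alpha$-invariant subgroup. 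Then $\ml{\alpha}=\pbp{\alpha}\cap\pbn{\alpha}$ inherits both properties.

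For closedness I would fix a compact open subgroup $W\leq G$ tidy for $\alpha$, which exists by Proposition~\ref{prop:min=>tidy}, and establish the identities
\[
\pbp{\alpha}\cap W \;=\; W_-\qquad\text{and}\qquad \pbn{\alpha}\cap W \;=\; W_+.
\]
The inclusions $W_-\subseteq\pbp{\alpha}\cap W$ and $W_+\subseteq\pbn{\alpha}\cap W$ are immediate from the definitions of $W_\pm$ in the main theorem, together with the fact that compactness of $W$ makes the relevant orbits/regressive sequences in $W$ automatically bounded. For the reverse inclusions, if $x\in\pbp{\alpha}\cap W$, then $\{\alpha^n(x)\}_{n\in\mathbb{N}}$ has compact closure and hence an accumulation point, so Proposition~\ref{prop:winWplus}(\ref{lem:winWplus2}) places $x$ in $W_-$; dually, if $x\in\pbn{\alpha}\cap W$, any bounded $\alpha$-regressive sequence for $x$ has an accumulation point, so Proposition~\ref{prop:winWplus}(\ref{lem:winWplus1}) places $x$ in $W_+$. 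Since $W_+$ and $W_-$ are closed subgroups of $G$ (as noted after the statement of the main theorem), each intersection is relatively closed in the open identity neighbourhood $W$, so Lemma~\ref{lem:bourbaki} gives closedness of $\pbp{\alpha}$ and $\pbn{\alpha}$, and then $\ml{\alpha}$ is closed as their intersection.

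There is no serious obstacle: the whole argument is an assembly of earlier results, and the content of the proposition is really that the tidy subgroup $W$ simultaneously encodes both boundedness-of-forward-orbit and boundedness-of-some-regressive-sequence at every point of $W$, which is exactly what Proposition~\ref{prop:winWplus} supplies. The only point worth care is that one uses a tidy $W$ (not merely tidy above), since the implications in Proposition~\ref{prop:winWplus} depend on the equality $L_V=V_+\cap V_-$ that requires both \TA\ and \TB.
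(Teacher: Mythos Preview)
Your proposal is correct and follows essentially the same approach as the paper: both proofs use a tidy subgroup $W$ together with Proposition~\ref{prop:winWplus} to identify $\pbp{\alpha}\cap W=W_-$ and $\pbn{\alpha}\cap W=W_+$, and deduce closedness from the closedness of $W_\pm$. The only cosmetic difference is that you invoke Lemma~\ref{lem:bourbaki} explicitly, whereas the paper runs the equivalent coset argument directly (taking $x$ in the closure and showing $xW\cap\pbp{\alpha}\subseteq yW_-$ for any $y\in xW\cap\pbp{\alpha}$).
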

\begin{proof}
That these subsets are subgroups of $G$ and are $\alpha$-invariant is immediate from their definitions.

To see that $\pbp{\alpha}$ is closed, consider $x$ in  $\overline{\pbp{\alpha}}$. Let $W$ be a compact, open subgroup of~$G$ that is tidy for $\alpha$, and choose $y,z\in xW\cap \pbp{\alpha}$. Then $y^{-1}z$ is in $W\cap\pbp{\alpha}$, whence $y^{-1}z$ belongs to $W_-$ by Proposition~\ref{prop:winWplus}(\ref{lem:winWplus2}). Hence $xW\cap \pbp{\alpha}$ is contained in $yW_-$, which is closed because $W_-$ is. Therefore $x$ is in $yW_-$ and  is consequently in $\pbp{\alpha}$. 

A similar argument appealing to Proposition~\ref{prop:winWplus}(\ref{lem:winWplus1}) shows that $\pbn{\alpha}$ is closed, and it follows immediately that $\ml{\alpha}$ is closed as well.  
\end{proof}

 It is clear that the restrictions of $\alpha$ to $\pbn{\alpha}$ and $\ml{\alpha}$ are onto homomorphisms and that the restriction to $\pbp{\alpha}$ need not be onto. Before analyzing these restrictions further, it is convenient to introduce some additional subgroups associated with $\alpha$.
\begin{definition}
\label{defn:iterated_kernel}
The \emph{bounded iterated kernel} of the endomorphism $\alpha$ is
$$
\ikk{\alpha} = \left\{ x\in G \mid \exists n \text{ such that }\alpha^n(x) = \ident\right\}^- \cap \pbn{\alpha},
$$
and the \emph{nub} of $\alpha$ is 
$$
\nub{\alpha} = \bigcap\left\{ W\leq G \mid W\text{ is compact, open and minimizing for }\alpha\right\}.
$$
\end{definition}
It is again clear that these are $\alpha$-invariant subgroups of $G$. That the iterated kernel of $\alpha$, that is, $\left\{ x\in G \mid \exists n \text{ such that }\alpha^n(x) = \ident\right\}$, need not be closed is seen in Example~\ref{ex:not_TB}. Hence the need to take the closure of this set in the definition of $\ikk{\alpha}$ in order to ensure that it is closed. As the intersection of closed subgroups, $\nub{\alpha}$ is closed.

\begin{proposition}
\label{prop:ik_nub_par}
Let $\alpha\in \End{G}$. Then 
$$
\ikk{\alpha} \leq \nub{\alpha} \leq \ml{\alpha}
$$
and each subgroup is $\alpha$-stable. Furthermore, $\nub{\alpha}$ and $\ikk{\alpha}$ are compact, $\ikk{\alpha}$ is normal in $\pbn{\alpha}$ and the endomorphism induced on $\pbn{\alpha}/\ikk{\alpha}$ is an automorphism. 
\end{proposition}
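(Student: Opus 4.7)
The proof establishes each of the six assertions in turn, drawing on the structural results of the preceding sections.

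The inclusion $\nub{\alpha}\leq\ml{\alpha}$ and compactness of $\nub{\alpha}$ follow from Corollary~\ref{cor:tidy_subgroups}, which places $\nub{\alpha}\subseteq W_+\cap W_-$ for every tidy $W$. Proposition~\ref{prop:Vplusplus} identifies this intersection with the compact group $L_W$, while $W_+\subseteq\pbn{\alpha}$ (a regressive sequence in a compact set is bounded) and $W_-\subseteq\pbp{\alpha}$ (forward orbit lies in $W$) together yield $\nub{\alpha}\subseteq\ml{\alpha}$ and compactness of $\nub{\alpha}$ as a closed subset of the compact $L_W$.

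For $\ikk{\alpha}\leq\nub{\alpha}$ and the compactness of $\ikk{\alpha}$, the plan is to prove $\ikk{\alpha}=K_W$ for every tidy $W$. One direction is easy: elements of $\mathcal{K}_W$ have the form $\alpha^m(v)$ with $v\in W_+$ and $\alpha^n(v)=\ident$, so they lie in $\bigcup_n\ker(\alpha^n)\cap\pbn{\alpha}\subseteq\ikk{\alpha}$, whence $K_W=\overline{\mathcal{K}_W}\subseteq\ikk{\alpha}$. The reverse inclusion $\ikk{\alpha}\subseteq K_W$ is the main obstacle. Corollary~\ref{cor:in_KV} gives $\ker(\alpha^n)\cap\pbn{\alpha}\subseteq K_W$, so it suffices to approximate every $x\in\ikk{\alpha}$ by elements of $\bigcup_n\ker(\alpha^n)\cap\pbn{\alpha}$. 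The delicacy is that $\ikk{\alpha}$ is defined by taking the closure of the iterated kernel \emph{before} intersecting with $\pbn{\alpha}$, so a given approximating sequence $y_i\to x$ with $\alpha^{n_i}(y_i)=\ident$ need not itself lie in $\pbn{\alpha}$. I would handle this by splitting cases: if $\{n_i\}$ is bounded by some $N$, then $\ker(\alpha^N)$ is closed and $x\in\ker(\alpha^N)\cap\pbn{\alpha}$, so Corollary~\ref{cor:in_KV} applies directly; otherwise I use the bounded $\alpha$-regressive sequence of $x$ to modify each $y_i$ into a new witness $z_i\in\bigcup_n\ker(\alpha^n)\cap\pbn{\alpha}$ still converging to $x$. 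Closedness of $K_W$ then gives $x\in K_W$, so $\ikk{\alpha}\leq K_W\leq W$ for every tidy $W$, and compactness of $\ikk{\alpha}$ follows from $\ikk{\alpha}\subseteq L_W$.

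Once $\ikk{\alpha}=K_W$ is in hand, the remaining assertions follow readily. The $\alpha$-stability of $\ml{\alpha}$ comes from the lifting property for bounded regressive sequences in $\pbn{\alpha}$; that of $\ikk{\alpha}$ is inherited from $K_W$ via Proposition~\ref{prop:TB_criterion1}; and that of $\nub{\alpha}$ follows from a finite-intersection-property argument on the compact sets $L_W\cap\alpha^{-1}(x)$ indexed by tidy subgroups $W$, where directedness is supplied by Proposition~\ref{prop:intersection}. Normality of $\ikk{\alpha}$ in $\pbn{\alpha}$ is immediate from the identity $\alpha^n(gxg^{-1})=\alpha^n(g)\alpha^n(x)\alpha^n(g)^{-1}$, which shows conjugation preserves every $\ker(\alpha^n)$ and hence $\overline{\bigcup_n\ker(\alpha^n)}$, combined with $\pbn{\alpha}$ being a subgroup. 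Finally, the induced endomorphism $\bar\alpha$ on $\pbn{\alpha}/\ikk{\alpha}$ is well-defined and surjective by the $\alpha$-invariance and surjectivity of $\alpha$ on $\pbn{\alpha}$; for injectivity, given $g\in\pbn{\alpha}$ with $\alpha(g)\in\ikk{\alpha}=K_W$, the $\alpha$-stability of $K_W$ yields $h\in K_W=\ikk{\alpha}$ with $\alpha(h)=\alpha(g)$, and then $gh^{-1}\in\pbn{\alpha}\cap\ker(\alpha)\subseteq\ikk{\alpha}$ forces $g\in\ikk{\alpha}$.
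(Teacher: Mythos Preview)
Your overall architecture is sound and in several places cleaner than the paper's (your normality argument via ``$\ker(\alpha^n)\trianglelefteq G$, hence $\overline{\bigcup_n\ker(\alpha^n)}\cap\pbn{\alpha}\trianglelefteq\pbn{\alpha}$'' is slicker than the paper's, which works with dense representatives). However the step you yourself flag as ``the main obstacle'' is not actually carried out. You need $\ikk{\alpha}\subseteq K_W$ (indeed you need the full equality $\ikk{\alpha}=K_W$, since you later deduce $\alpha$-stability of $\ikk{\alpha}$ from that of $K_W$), and for this you must show that every $x\in\overline{\bigcup_n\ker(\alpha^n)}\cap\pbn{\alpha}$ lies in $K_W$. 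Your case split is fine when the exponents $n_i$ are bounded, but in the unbounded case the sentence ``use the bounded $\alpha$-regressive sequence of $x$ to modify each $y_i$ into a new witness $z_i\in\bigcup_n\ker(\alpha^n)\cap\pbn{\alpha}$'' is a hope, not an argument: you give no construction of $z_i$, and it is not at all clear how the regressive sequence for $x$ lets you adjust an arbitrary $y_i\in\ker(\alpha^{n_i})\setminus\pbn{\alpha}$ into something that both stays close to $x$ and acquires a bounded regressive sequence.

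One way to close the gap, which bypasses the need to manufacture $z_i$, is this: for large $i$ write $x^{-1}y_i=a_ib_i$ with $a_i\in W_+$, $b_i\in W_-$ (tidiness above). Then $\alpha^{n_i}(xa_i)=\alpha^{n_i}(b_i)^{-1}\in W_-$, so the two-sided $\alpha$-orbit through $xa_i$ (regressive side built from those of $x$ and $a_i$) enters $W$ and has accumulation points on both sides; Proposition~\ref{prop:bounded_ in_LV} then gives $xa_i\in L_W=W_+\cap W_-$, whence $x=(xa_i)a_i^{-1}\in W_+\subseteq W$. This puts $x$ in every tidy $W$, hence in $\nub{\alpha}$, and compactness of $\ikk{\alpha}$ follows. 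Note this does \emph{not} by itself give $\ikk{\alpha}=K_W$; if you still want $\alpha$-stability of $\ikk{\alpha}$ via that identification you will need a separate argument (the paper simply asserts $\alpha$-stability is ``immediate from the definition'' and uses it, together with a parenthetical density claim, exactly as you do for injectivity of the induced map). In short: your route through $K_W$ is more ambitious than the paper's direct appeal to $L_V$, but both proofs are terse at precisely the closure-versus-intersection point you identified, and your write-up does not yet resolve it.
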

\begin{proof}
That $\ikk{\alpha} \leq \nub{\alpha}$ and is compact follows from Corollary~\ref{cor:bounded_ in_LV}, which implies that $\ikk{\alpha}\leq L_V$ for every subgroup $V$ that is tidy for $\alpha$. It is also immediate from the definition that $\ikk{\alpha}$ is $\alpha$-stable. 

Since $\nub{\alpha}$ is the intersection of compact groups, it is compact. Corollary~\ref{cor:tidy_subgroups} shows that in fact
$$
\nub{\alpha} = \bigcap\left\{ W_+\cap W_- \mid W\text{ is tidy for }\alpha\right\}.
$$  
Since $W_+\cap W_-$ is $\alpha$-stable, it follows immediately that $\nub{\alpha}$ is $\alpha$-invariant, and by a compactness argument that it is $\alpha$-stable.  
Therefore $\nub{\alpha}$ is contained in $\ml{\alpha}$. 

It follows in particular from the foregoing that $\ikk{\alpha}$ is a subgroup of $\pbn{\alpha}$. Let $x$ in $\ikk{\alpha}$ satisfy $\alpha^n(x) = \ident$ for some $n$. (Such elements are dense in $\ikk{\alpha}$.) Let $y$ be in $\pbn{\alpha}$ and have bounded $\alpha$-regressive sequence $\{y_n\}_{n\in\mathbb{N}}$. Then $\alpha^n(yxy^{-1}) = \ident$ and $\{y_nx_ny_n^{-1}\}_{n\in\mathbb{N}}$ is a bounded $\alpha$-regressive sequence for $yxy^{-1}$. Therefore $yxy^{-1}$ belongs to $\ikk{\alpha}$ and it follows that $\ikk{\alpha}$ is normal in $\pbn{\alpha}$.

The restriction of $\alpha$ to $\pbn{\alpha}$ is an endomorphism that is onto by definition of $\pbn{\alpha}$. The co-restriction to $\pbn{\alpha}/\ikk{\alpha}$ is therefore an endomorphism that is onto also. Consider $x\ikk{\alpha}$ in the kernel of this co-restriction, so that $x\in \pbn{\alpha}$ and $\alpha(x)\in \ikk{\alpha}$. Then, since the restriction of $\alpha$ to $\ikk{\alpha}$ is onto, there is $y\in \ikk{\alpha}$ such that $\alpha(y) = \alpha(x)$. Hence $y^{-1}x$ belongs to $\ikk{\alpha}$, and it follows that $x$ is in $\ikk{\alpha}$. Therefore the kernel of the co-restriction of $\alpha$ to $\pbn{\alpha}/\ikk{\alpha}$ is trivial and this co-restriction is an automorphism. 
\end{proof}
\begin{remark}
\label{rem:ik_nub_par}
A similar but stronger conclusion than that of Proposition~\ref{prop:ik_nub_par} is reached in~\cite[Theorem~A]{reid} under the stronger hypothesis that $G$ is a compact group that has only finitely many open subgroups of any given index. 
\end{remark}
The structure of $\nub{\alpha}$ may be described in some detail as a result of Proposition~\ref{prop:ik_nub_par}. The quotient $\nub{\alpha}/\ikk{\alpha}$ is the nub of an automorphism, and therefore has the structure given in~\cite{Wi:nub}. Since the kernel of the restriction of $\alpha$ to $\pbn{\alpha}$ is closed and contained in the compact set $\ikk{\alpha}$, we also have the following.
\begin{corollary}
\label{cor:ik_nub_par} The restriction of $\alpha$ to $\pbn{\alpha}$ has compact kernel and is therefore a proper map. \endproof
\end{corollary}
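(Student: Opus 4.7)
The plan is in two stages: first establish that the kernel of $\alpha|_{\pbn{\alpha}}$ sits inside $\ikk{\alpha}$ and is therefore compact, then bootstrap from compactness of the kernel to properness of the map.

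For the kernel, if $x\in\ker(\alpha)\cap\pbn{\alpha}$ then $\alpha^1(x)=\ident$ places $x$ directly in the set $\{y\in G\mid\exists n,\,\alpha^n(y)=\ident\}$ from Definition~\ref{defn:iterated_kernel} (even before taking closure), and the hypothesis $x\in\pbn{\alpha}$ then gives $x\in\ikk{\alpha}$. Proposition~\ref{prop:ik_nub_par} provides compactness of $\ikk{\alpha}$, so $\ker(\alpha|_{\pbn{\alpha}})$ is closed in a compact set and hence compact.

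For properness, I would fix a compact open subgroup $W$ of $G$ that is tidy for $\alpha$. By Proposition~\ref{prop:winWplus}(\ref{lem:winWplus1}) every $x\in W$ with a bounded $\alpha$-regressive sequence lies in $W_+$, so $W\cap\pbn{\alpha}=W_+$ serves as a compact open subgroup of $\pbn{\alpha}$. The critical intermediate step, and the one I expect to require the most care, is to show that $\alpha^{-1}(W_+)\cap\pbn{\alpha}$ is compact. Given $x$ in this set, the definition of $W_+$ supplies some $w'\in W_+$ with $\alpha(w')=\alpha(x)$, whence $x(w')^{-1}\in\ker(\alpha)\cap\pbn{\alpha}$ and $x\in(\ker(\alpha)\cap\pbn{\alpha})W_+$; this shows $\alpha^{-1}(W_+)\cap\pbn{\alpha}$ is contained in a product of two compact sets, and as it is closed in $\pbn{\alpha}$ it is itself compact.

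To conclude, I would cover a given compact $C\subseteq\pbn{\alpha}$ by finitely many cosets $g_iW_+$ with $g_i\in\pbn{\alpha}$, use surjectivity of $\alpha$ on $\pbn{\alpha}$ (immediate from the definition of $\pbn{\alpha}$) to write $g_i=\alpha(h_i)$ for some $h_i\in\pbn{\alpha}$, and then observe that $\alpha^{-1}(g_iW_+)\cap\pbn{\alpha}=h_i\bigl(\alpha^{-1}(W_+)\cap\pbn{\alpha}\bigr)$, which is compact by the previous step. Then $\alpha^{-1}(C)\cap\pbn{\alpha}$ is a closed subset of a finite union of compact sets, hence compact, which is precisely what properness demands.
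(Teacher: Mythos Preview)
Your argument is correct and matches the paper's reasoning for the kernel exactly: the sentence immediately preceding the corollary observes that the kernel is closed and contained in the compact set $\ikk{\alpha}$, and the corollary itself carries only an \emph{endproof} symbol. For properness the paper offers no argument whatsoever, simply writing ``and is therefore a proper map''; your explicit verification via $W_+=W\cap\pbn{\alpha}$ and the containment $\alpha^{-1}(W_+)\cap\pbn{\alpha}\subseteq(\ker\alpha\cap\pbn{\alpha})W_+$ supplies the details the paper leaves to the reader, so the two treatments are aligned rather than genuinely different.
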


If $s(\alpha)>1$, then $\pbn{\alpha}$ is non-trivial and it follows that $\bigcap_{n\geq0}\alpha^n(G)$ is large. If $s(\alpha) = 1$, then this intersection may be trivial which, is the case most obviously if $\alpha$ itself is trivial. It might be wondered what can be said about $\bigcap_{n\geq0}\alpha^n(G)$ if it is supposed that $\bigcap_{n\geq0}\overline{\alpha^n(G)}$ is non-trivial. In the case when $G$ is metrizable and $\alpha$ is assumed to have dense range, $\bigcap_{n\geq0}{\alpha^n(G)}$ is dense in $G$ for purely topological reasons, see the Mittag-Leffler Theorem of Bourbaki~\cite[II.3.5]{Bourbaki_Top} or \cite{Runde}. However, the next example suggests that no stronger conclusion follows from $\alpha$ being an endomorphism. 
\begin{example}
\label{ex:rangestrange}
Let $G = ({F}_2((t)),+)$ be the additive group of the field of formal Laurent series over the field of order~$2$. Denote elements of $G$ by $f = \sum_{n\geq N} f_nt^n$, where $N\in\mathbb{Z}$, and define $\alpha\in \End{G}$ by
$$
\alpha(f)_n = \begin{cases}
f_{2n-2}, & \text{ if }n\leq0\\
f_{-n}, & \text{ if }n>0\text{ is odd}\\ 
f_{n/2}, & \text{ if }n\geq2\text{ is even}
\end{cases}.
$$
Then $\alpha^n(f) \to \ident$ as $n\to\infty$ for every $f\in G$ and $\bigcap_{n\geq0} \alpha^n(G)$ is the dense subgroup of Laurent polynomials, that is, the functions with finite support. 
\end{example}

\section{Examples}
\label{sec:examples}

The paper concludes by pointing out that totally disconnected, locally compact groups appearing in the literature do possess non-invertible endomorphisms that have been little studied. The first example is of a non-Hopfian group and the second of a non-co-Hopfian group. The methods introduced in this paper have the potential to be used for a full analysis of the endomorphisms of these and other groups. 

\begin{example}
The \emph{Baumslag-Solitar} groups $BS(m,n)$, for $m,n\in \mathbb{Z}\setminus \{0\}$ have the finite presentation $BS(m,n) = \langle a, t \mid t^{-1}a^mt = a^n\rangle$. It was shown in~\cite{BaumSol} that, provided that $m$ does not divide $n$ and $n$ does not divide $m$ and that $m$ and $n$ do not have the same prime divisors, then $BS(m,n)$ is non-Hopfian, that is, has an endomorphism that is onto but not one-to-one. The endomorphism exhibited is 
\begin{equation}
\label{eq:BSendo}
\alpha : t\mapsto t, \quad a\mapsto a^p,
\end{equation} 
where $p$ is a prime that divides $m$ but not $n$. 

Baumslag-Solitar groups embed densely into totally disconnected, locally compact groups, denoted $G_{m,n}$. To see this, note that the subgroup $\langle a\rangle$ is commensurated by $\BS(m,n)$, which implies that the closure of $\rho(BS(m,n))$ is a locally compact subgroup of $\Sym(\BS(m,n))/\langle a \rangle$, where $\rho : \BS(m,n) \to \Sym(\BS(m,n))/\langle a \rangle$ denotes the natural embedding, see~\cite{EldWi}. The embedding obtained by taking the closure of $BS(m,n)$ in the automorphism group of the Bass-Serre tree of the HNN-extension arising from the isomorphism $\langle a^m\rangle \to \langle a^n\rangle$ yields an isomorphic completion of $\BS(m,n)$. Under this completion, the closure of $\langle a\rangle$ is a compact, open subgroup isomorphic to $\bigoplus \mathbb{Z}_{p_i}$ where the sum is over the prime divisors, $p_i$, of $m/\gcd\{m,n\}$ and $n/\gcd\{m,n\}$. 

The endomorphism $\alpha$ of~\eqref{eq:BSendo} extends continuously to $G_{m,n}$ because its restriction to the open subgroup $\langle a\rangle$ is continuous. Since the extension to $G_{m,n}$ satisfies 
$$
\alpha\left(\bigoplus \mathbb{Z}_{p_i}\right) = p\left(\bigoplus \mathbb{Z}_{p_i}\right) < \bigoplus \mathbb{Z}_{p_i},
$$
$\bigoplus \mathbb{Z}_{p_i}$ is minimising and $s(\alpha) = 1$. Composing $\alpha$ with an inner automorphism $x\mapsto t^kxt^{-k}$ yields further endomorphisms. When $k>0$, $p$ divides the scale of the inner automorphism and composing with $\alpha$ reduces reduces the scale by a factor of $p$. When $k<0$, $p$ does not divide the scale of the inner automorphism and composing with $\alpha$ does not change the scale. \end{example}

\begin{example}
The \emph{hierarchomorphism group} of Neretin is the group $\text{Hier}(\mathcal{T})$ of homeomorphisms of the boundary $\partial\mathcal{T}$ of the tree $\mathcal{T}$ that extend to almost automorphisms of the tree, that is, to automorphisms of the forest obtained when a finite subtree is removed, see~\cite{Ner1,Ner2}. This group is compactly generated and abstractly simple when the tree is the regular tree, $\mathcal{T}_n$, with valency $n+1$, see~\cite{Kap} where it is called the \emph{spheromorphism group} and denoted~$N_n$.  The latter term and notation are used here. 

An endomorphism $\alpha : N_n\to N_n$ that is one-to-one but not onto may be defined as follows. When one vertex is deleted from $\mathcal{T}_n$ a forest, $\mathcal{F}$, of $n+1$ rooted trees in which every vertex has $n$ children remains. If the root of one of the trees in $\mathcal{F}$ is deleted, then a forest of $2n$ such rooted trees remains. Partition this forest as $\mathcal{A}\cup \mathcal{B}$ where $\mathcal{A}$ is a set of $n+1$ rooted trees and $\mathcal{B}$ a set of $n-1$ rooted trees. Fix an isomorphism $\phi : \mathcal{F}\cup\partial\mathcal{F} \to \mathcal{A}\cup \partial\mathcal{A}$. Every spheromorphism in $N_n$ may be realised as an automorphism of a sub-forest of $\mathcal{F}$ that extends to a homeomorphism of $\partial\mathcal{F}$. For each spheromorphism, $x$, define
$$
\alpha(x)(\omega) = \begin{cases}
\phi(x(\phi^{-1}(\omega))), & \text{ if }\omega\in\partial\mathcal{A}\\
\omega, & \text{ if }\omega\in\partial\mathcal{B}
\end{cases}.
$$
The $\alpha(x)$ is a spheromorphism in $N_n$ and $\alpha : N_n\to N_n$ is an endomorphism. The scale of $\alpha$ and it tidy subgroups will depend on the choice of $\phi$.

Since $N_n$ is simple, any non-trivial endomorphism has trivial kernel and so, in particular, $N_n$ is Hopfian.
\end{example}

\section*{Acknowledgements}
This research is supported by the Australian Research Council Discovery Grant DP0984342.

\end{document}